\definecolor{lightgray}{rgb}{0.9, 0.9, 0.9}
\definecolor{darkgray}{rgb}{0.7, 0.7, 0.7}
\definecolor{darkblue}{rgb}{0, 0, .4}
\theoremstyle{plain}
\newtheorem{theorem}{Theorem}[section]
\newtheorem{lemma}[theorem]{Lemma}
\newtheorem{proposition}[theorem]{Proposition}
\theoremstyle{definition}
\theoremstyle{remark}
\newcounter{todocounter}
\newfont{\footsc}{cmcsc10 at 8truept}
\newfont{\footbf}{cmbx10 at 8truept}
\newfont{\footrm}{cmr10 at 10truept}
\newcommand{\Av}{\operatorname{Av}}
\newcommand{\A}{\mathcal{A}}
\newcommand{\B}{\mathcal{B}}
\newcommand{\C}{\mathcal{C}}
\newcommand{\D}{\mathcal{D}}
\newcommand{\E}{\mathcal{E}}
\newcommand{\F}{\mathcal{F}}
\newcommand{\G}{\mathcal{G}}
\newcommand{\I}{\mathcal{I}}
\renewcommand{\S}{\mathcal{S}}
\newcommand{\T}{\mathcal{T}}
\newcommand{\X}{\mathcal{X}}
\renewcommand{\a}{a}
\renewcommand{\b}{b}
\renewcommand{\c}{c}
\renewcommand{\d}{d}
\newcounter{hone}
\newcounter{htwo}
\newcounter{vone}
\newcounter{vtwo}
\def\grid(#1,#2){%
\setcounter{hone}{#1*20}%
\setcounter{vone}{#2*20}%
\psset{xunit=0.005in, yunit=0.005in} \psset{linewidth=0.005in}%
\begin{pspicture}(0,0)(\thehone,\thevone)\drawlines(#1,#2)\end{pspicture}}
\def\drawlines(#1,#2){%
\setcounter{hone}{#1}%
\setcounter{vtwo}{20*\thehone}%
\addtocounter{hone}{1}%
\setcounter{vone}{#2}%
\setcounter{htwo}{20*\thevone}%
\addtocounter{vone}{1}%
\multido{\i=0+20}{\thehone}{\psline[linestyle=solid,linewidth=0.005in](\i,0)(\i,\thehtwo)}%
\multido{\i=0+20}{\thevone}{\psline[linestyle=solid,linewidth=0.005in](0,\i)(\thevtwo,\i)}}
\def\upline(#1,#2){%
\setcounter{hone}{(#1*20)-17}%
\setcounter{vone}{(#2*20)-17}%
\setcounter{htwo}{(#1*20)-3}%
\setcounter{vtwo}{(#2*20)-3}%
\psline[linestyle=solid,linewidth=0.02in](\thehone,\thevone)(\thehtwo,\thevtwo)}
\def\downline(#1,#2){%
\setcounter{hone}{(#1*20)-17}%
\setcounter{vtwo}{(#2*20)-17}%
\setcounter{htwo}{(#1*20)-3}%
\setcounter{vone}{(#2*20)-3}%
\psline[linestyle=solid,linewidth=0.02in](\thehone,\thevone)(\thehtwo,\thevtwo)}
\def\cellclass(#1,#2)#3{%
\setcounter{hone}{(#1*20)-10}%
\setcounter{vone}{(#2*20)-10}%
\rput[c](\thehone,\thevone){#3}}
\title{The enumeration of three pattern classes}
\author{
M.~H. Albert\thanks{{\tt malbert@cs.otago.ac.nz}},
M.~D. Atkinson\thanks{{\tt mike@cs.otago.ac.nz}}\\
Department of Computer Science\\
University of Otago,  New Zealand\\
and\\
Robert Brignall\thanks{{\tt r.brignall@open.ac.uk}}\\
Department of Mathematics\\
The Open University, UK}
\begin{document}
\maketitle
\begin{abstract}The structure of three pattern classes $\Av(2143, 4321)$, $\Av(2143, 4312)$ and $\Av(1324, 4312)$ is determined using the machinery of monotone grid classes.  This allows the permutations in these classes to be described in terms of simple diagrams and regular languages and, using this, the rational generating functions which enumerate these classes are determined.
\end{abstract}
\section{Introduction}

A \emph{pattern class} (or simply \emph{class}) is a set of permutations closed downwards under a natural partial order. Therefore, every class is defined by a set of forbidden restrictions, the minimal permutations that do not belong to the class.  A central endeavor in the theory of pattern classes is to describe the class defined by a given set of restrictions.  To this end a rich structure theory of classes has been developed.  When new theoretical breakthroughs occur they are exploited to widen our understanding to previously inaccessible pattern classes.  In essence that is the story of this paper.  As we shall see the new theory of  \emph{monotone grid classes} (outlined in the next section) allows us to find the structure of (and enumerate) three pattern classes drawn from a particular suite of pattern classes that, over the years, has succumbed gradually to one new technique after another.

We shall be concerned with classical pattern containment.  In other words, one permutation $\pi$ is  contained as a pattern in  another permutation $\sigma$ if $\sigma$ has a subsequence whose elements are ordered relatively the same as the elements of $\pi$.  When permutations are represented by their diagrams (the plot of pairs $(i,\sigma(i))$ in the plane) then pattern containment is essentially containment of point subsets.  Pattern containment is obviously a partial order on the set of all permutations and the down-sets of this poset are called \emph{pattern classes}.

The extensive literature on pattern classes contains studies of many particular classes either because they arise naturally in some other guise or because their study is part of a systematic classification of various types of classes.  Examples of the former case are the classes associated with various data structures \cite{knuth:the-art-of-comp:1} and the classes corresponding to smooth Schubert varieties \cite{bona:the-permutation:}.  This paper however is concerned more with systematic classification. That is, its primary message is that the tools which have been developed that describe certain well-structured classes can be applied directly to completely characterize, and in particular enumerate, classes which had resisted ad hoc attacks.

Every pattern class can be defined by its \emph{basis} which is the set of minimal permutations not in the class.  A permutation will lie in the class if and only if it does not contain any of the basis permutations as a pattern.  We write $\Av(B)$ for the class with basis $B$.  In most cases, it is impossible to say much about the structure of a class given only its basis.  But when the basis contains short permutations (or many permutations) some progress is often possible.  It would not be unfair to say that the structure is completely understood when all the basis permutations have length at most three \cite{simion:restricted-perm:}, or if there are two basis permutations of lengths three and four \cite{atkinson:restricted-perm:,west:generating-tree:96}.  The cases of pattern classes having two basis permutations of length four lie on the boundary of our present knowledge.


There are $\binom{24}{2}=276$ pairs of distinct permutations of length four but the natural symmetries of the pattern containment order mean that only 56 of them are essentially different.  Within this set of 56  there are only 38 different \emph{Wilf classes} (classes whose enumerations are different) \cite{bona:the-permutation:,kremer:permutations-wi:,kremer:postscript:-per:,kremer:finite-transiti:,le:wilf-classes-of:}.  Of these 38 Wilf classes, about half have been enumerated (see \cite{wiki:enumerations} which lists 18 enumerations).  The main results of this paper produce enumerations (given by generating functions) and structure results for three previously unenumerated  classes: $\Av(2143, 4321)$, $\Av(2143, 4312)$ and $\Av(1324, 4312)$.

We reiterate that, while it is interesting to continue to extend the systematic classification of pattern classes having few short basis elements, we view the main contribution of this work not so much as being the particular structural results and enumerations obtained, but rather the illustration that they provide of our growing understanding of the structure of pattern classes as a whole, particularly of monotone grid classes as discussed in Section \ref{SEC_Monotone_grid_classes}.

To conclude this section we recall the idea of a simple permutation, how every permutation is formed by inflation from a simple permutation, and some relevant terminology.

A permutation is said to be \emph{simple} if it has no non-trivial interval (a contiguous subsequence consisting of contiguous values).  Every permutation $\tau$ is the inflation of a (unique) simple permutation $\sigma$ in the sense that $\tau$ is obtained from $\sigma$ by replacing its elements by intervals (with appropriate normalization)~\cite{albert:simple-permutat:}.  This process is called \emph{inflating} $\sigma$.  An example should clarify these ideas.  If the simple permutation $3142$ is inflated so that its elements $3,1,4,2$ become intervals isomorphic to $12, 213, 1, 312$ respectively we obtain the permutation
\[3142[12, 213, 1, 312]=782139645.\]

While $\tau$ determines $\sigma$ uniquely the interval decomposition of $\tau$ is unique only if $|\sigma|> 2$. The case $|\sigma|=2$ is somewhat different.  These cases are associated with \emph{sum-decomposable} permutations ($\sigma=12$) and \emph{skew-decomposable} permutations ($\sigma=21$).  A sum-decomposable (respectively skew-decomposable) permutation is one of the form $\alpha\beta$ where every element of the prefix $\alpha$ is less (respectively, greater) than every element of the suffix $\beta$.  The terms sum-indecomposable and skew-indecomposable have the obvious meanings while a permutation which is neither sum-indecomposable nor skew-indecomposable is said to be \emph{strong}-indecomposable.

Simple permutations enter into the study of pattern classes in the following way.  If we wish to understand the permutations of a class $\T$ then we may first find its set of simple permutations.  If we know enough about the structure of the simple permutations we shall be able to find the inflations of them that also lie in $\T$.  In general this type of structural understanding is often accompanied by the means to enumerate the class (roughly speaking, the generating function of $\T$ is related by functional composition to the generating function of the simple permutations in $\T$).  This is exactly the the way we shall exploit simple permutations in this paper.

In the next section we summarize a newer technique for analyzing pattern classes.  Then the next three sections treat the pattern classes $\Av(2143, 4321)$, $\Av(2143, 4312)$ and $\Av(1324, 4312)$ one by one.  Since our treatments have some variation we briefly outline them in turn.

\begin{description}
\item[$\Av(2143, 4321)$.] We prove it is the union of 4 monotone grid classes and enumerate it directly from this.
\item[$\Av(2143, 4312)$.] We first prove it is contained in a certain $2\times 2$ monotone grid class and then realize it as the union of two explicit component grid classes.  Next we enumerate the simple permutations in each of these components.  Then we refine this by classifying the simples into a small number of types and enumerating the numbers of each type.  The types are necessary because they have slightly different allowable inflations.  Knowing the inflations allows us to enumerate the strong-indecomposables.  Finally we incorporate the sum- and skew-decomposables.
\item[$\Av(1324, 4312)$.] This is the most complex enumeration because the monotone grid components of the entire class are not described explicitly.  Instead we describe the simple permutations as members of four monotone grid classes.  By enumerating the simples of each type and determining their possible inflations we can enumerate the strong-indecomposables.  Lastly we incorporate the sum- and skew-decomposables.
\end{description}

The discussion of the three classes proceeds roughly in order of the complexity of the underlying structural description. We hope that by presenting them in this order, and being quite explicit in our discussion of the simpler cases we will gain some indulgence from the reader for omitting some of the less interesting detailed justifications of the structural and enumerative results in the more complex ones.

\section{Monotone grid classes}
\label{SEC_Monotone_grid_classes}
In broad terms our approach to each of the pattern classes analyzed in this paper is in two stages.  In the first stage we prove a structure result that relates the pattern class or its simple permutations to a collection of monotone grid classes (defined below).  In the second stage we use an encoding that represents the permutations of these monotone grid classes by  regular languages over a finite alphabet.  In this section we give the background that underpins both of these stages.

In the study of pattern classes it is common to represent permutations $\pi$ by the plot of the points $(i,\pi(i))$ in the plane.  Thus $\pi=[16,13,18,11,19,10,8,7,12,3,2,14,9,6,5,4,1]$ would be represented by

\begin{center}
\psset{xunit=0.01in, yunit=0.01in} \psset{linewidth=0.005in}
\begin{pspicture}(0,0)(225,225)
\psset{arrowsize=4pt 6}
\psline[linestyle=solid,linewidth=0.005in]{->}(0,0)(0,225)
\psline[linestyle=solid,linewidth=0.005in]{->}(0,0)(225,0)
\psline[linestyle=solid,linewidth=0.005in](0,210)(210,210)
\psline[linestyle=solid,linewidth=0.005in](210,0)(210,210)

\pscircle*(10,160){0.04in}
\pscircle*(20,130){0.04in}
\pscircle*(30,180){0.04in}
\pscircle*(40,110){0.04in}
\pscircle*(50,190){0.04in}
\pscircle*(60,100){0.04in}
\pscircle*(70,80){0.04in}
\pscircle*(80,70){0.04in}
\pscircle*(90,120){0.04in}
\pscircle*(100,30){0.04in}
\pscircle*(110,20){0.04in}
\pscircle*(120,140){0.04in}
\pscircle*(130,90){0.04in}
\pscircle*(140,60){0.04in}
\pscircle*(150,50){0.04in}
\pscircle*(160,40){0.04in}
\pscircle*(180,10){0.04in}

\psline[linestyle=dashed,linewidth=0.005in](0,95)(210,95)
\psline[linestyle=dashed,linewidth=0.005in](0,145)(210,145)
\psline[linestyle=dashed,linewidth=0.005in](55,)(55,210)
\psline[linestyle=dashed,linewidth=0.005in](125,)(125,210)

\end{pspicture}
\end{center}

In this diagram the enclosing boundary square has been partitioned into cells, some empty, but all monotonic (increasing or decreasing).  The pattern of empty, increasing and decreasing cells can be represented by a {\em gridding matrix} with 0,1,-1 entries for each type of cell, or by a  cell diagram where the increasing and decreasing cells are displayed as sloping lines.  In this case the matrix  is

\[G=
 \begin{pmatrix*}[r]
  1 & 0 & 0 \\
  -1 & 1 & 0 \\
  0 & -1 & -1
 \end{pmatrix*}
\]

and the cell diagram is

\begin{center}
\psset{xunit=0.006in, yunit=0.006in} \psset{linewidth=0.005in}
\begin{pspicture}(0,0)(120,120)
\psline[linestyle=solid,linewidth=0.005in](0,0)(0,120)
\psline[linestyle=solid,linewidth=0.005in](0,0)(120,0)
\psline[linestyle=solid,linewidth=0.005in](120,0)(120,120)
\psline[linestyle=solid,linewidth=0.005in](0,120)(120,120)
\psline[linestyle=solid,linewidth=0.005in](0,40)(120,40)
\psline[linestyle=solid,linewidth=0.005in](0,80)(120,80)
\psline[linestyle=solid,linewidth=0.005in](40,0)(40,120)
\psline[linestyle=solid,linewidth=0.005in](80,0)(80,120)

\psline[linestyle=solid,linewidth=0.02in](3,83)(37,117)
\psline[linestyle=solid,linewidth=0.02in](3,77)(37,43)
\psline[linestyle=solid,linewidth=0.02in](43,37)(77,3)
\psline[linestyle=solid,linewidth=0.02in](43,43)(77,77)
\psline[linestyle=solid,linewidth=0.02in](83,37)(117,3)

\end{pspicture}
\end{center}

The set of all permutations whose diagram can be represented in this form is the monotone grid class associated with $G$.

The theory of monotone grid classes has been developed by several authors.  The classes were introduced initially by Murphy and Vatter \cite{murphy:profile-classes:} who gave an important necessary and sufficient condition that identifies when a monotone grid class is partially well-ordered.  Their condition can be given in terms of the {\em cell graph} of the defining matrix in which the vertices are the non-empty cells and two cells are adjacent if they lie in the same row or the same column of the matrix with no intervening non-empty cell.  They showed that the associated monotone grid class is partially well-ordered if and only if the cell graph is a forest.  Subsequently Vatter and Waton \cite{vatter:on-partial-well:} simplified the proof and introduced a method of associating words over a finite alphabet with permutations in the monotone grid class which is particularly useful when the cell graph is a forest.  This association can be used to encode permutations as words in a regular language and thereby obtain the generating function of the monotone grid class as a rational function.  It is our main tool in carrying out the enumerations.

Rather than repeat the formal justifications as given in \cite{vatter:on-partial-well:} (and more extensively refined in \cite{albert:geometric-grid-:}) we merely illustrate the general approach with  respect to the above monotone grid class.

The points in any given cell are encoded in the coding word by a common letter.  The order of the code letters is defined by the order in which the points of each cell are `read' and this order is in turn defined by a common order in which the cells in each row and in each column are read.  So, in the example, the reading order can be given by

\begin{center}
\psset{xunit=0.018in, yunit=0.018in} \psset{linewidth=0.005in}
\begin{pspicture}(0,0)(80,80)
\drawlines(3,3)
\psline[linestyle=solid,linewidth=0.02in]{->}(37,37)(23,23)
\psline[linestyle=solid,linewidth=0.02in]{->}(3,43)(17,57)
\psline[linestyle=solid,linewidth=0.02in]{->}(3,37)(17,23)
\psline[linestyle=solid,linewidth=0.02in]{<-}(43,17)(57,3)
\psline[linestyle=solid,linewidth=0.02in]{<-}(23,17)(37,3)

\psline[linestyle=solid,linewidth=0.02in]{->}(3,68)(17,68)
\psline[linestyle=solid,linewidth=0.02in]{<-}(23,68)(37,68)
\psline[linestyle=solid,linewidth=0.02in]{<-}(43,68)(57,68)
\psline[linestyle=solid,linewidth=0.02in]{->}(68,3)(68,17)
\psline[linestyle=solid,linewidth=0.02in]{<-}(68,23)(68,37)
\psline[linestyle=solid,linewidth=0.02in]{->}(68,43)(68,57)

\rput(15,35){\footnotesize $b$}
\rput(25,35){\footnotesize $c$}
\rput(6,54){\footnotesize $a$}
\rput(45,5){\footnotesize $e$}
\rput(26,6){\footnotesize $d$}
\end{pspicture}
\end{center}
The main technical difficulty is that permutations may have more than one valid decomposition into cells compatible with the gridding matrix. This is usually dealt with by an implicit choice of a particular grid decomposition; in most cases this corresponds to a local constraint on certain combinations of letters.  For example in the diagram of $\pi$ above there is another gridding in which the top horizontal dashed line is a little higher placing the first point into a lower cell. We might have chosen to prefer the given decomposition because the associated word is lexicographically earlier. That this, or a similar choice of  grid decomposition, still leads to a regular language of representative words is one of the main results of \cite{albert:geometric-grid-:}.

Another coding consideration arises when we are  encoding merely the simple permutations in the monotone grid class.  Simplicity can be violated in several ways.  One way is that the maximum or minimum point occurs at one end of the permutation (as happens in the example) and this is handled by forbidding certain beginnings and endings to a code word (in the example, an initial code letter $e$ would not be permitted).  Another is that a pair of points in a cell might form an interval of length 2.  This manifests itself by a repeated code letter and so repeated code letters have to be forbidden.  Still another, often the trickiest to handle, is that an interval can be formed by points in multiple cells.  Again it is established in \cite{albert:geometric-grid-:} that, when the cell graph is a forest (indeed, in a slightly wider context), an encoding for the simple permutations of the corresponding class exists where the codewords form a regular language and so can be defined by a finite automaton; all the encodings that we shall use are defined in this way.

We use the theory of monotone grid classes in another way too through a result of Huczynska and Vatter \cite{huczynska:grid-classes-an:}.  They proved that if a pattern class does not contain permutations which are a sum of 21's or a skew sum of 12's  then it is necessarily contained in some monotone grid class.  In the pattern classes we are considering we have two basis elements of length 4 and we focus on those pattern classes whose basis elements forbid sums of 21's and skew sums of 12's.  This is the reason why our paper is about the 3 pattern classes in question: they are the remaining unenumerated classes with two basis elements of length 4 that satisfy the Huczynska-Vatter criterion.

Finally, we give a name to an elementary monotone grid class.  The class defined by the matrix $(1\ -1)$ consists of permutations $\pi$ which can be written as the concatenation of an increasing sequence and a decreasing sequence, so is denoted by $\wedge$ and is called a \emph{wedge} class.  The basis of the class is $\{213, 312\}$.  The symmetries of $\wedge$ (denoted by $\vee$, $<$ and $>$) are also called wedge classes.

\section{Av(2143, 4321)}

\subsection{The structure of Av(2143, 4321)}

\begin{theorem}\label{maintheorem}
$\Av(4321,2143)$ is the union of four monotone grid classes which are displayed in the following diagram.
\end{theorem}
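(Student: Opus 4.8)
The plan is to prove two inclusions: that every permutation lying in one of the four displayed grid classes avoids both $4321$ and $2143$, and conversely that every permutation of $\Av(4321,2143)$ lies in one of these four classes $\G_1,\dots,\G_4$.

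For the first (easy) direction I would argue cell-combinatorially. In a monotone grid class all of whose cells are increasing, a strictly decreasing subsequence uses at most one point from each cell, and a single increasing cell contains no occurrence of $21$; hence an occurrence of $4321$ would require four cells in pairwise strictly south-east position, and an occurrence of $2143=21\oplus 21$ would require two disjoint such pairs of cells with the second pair lying entirely to the north-east of the first. One then checks that neither configuration occurs among the cells of any of the four displayed matrices, so $\G_i\subseteq\Av(4321,2143)$ for each $i$, and records a small witness permutation in each $\G_i$ not lying in the others so that all four classes are genuinely needed.

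For the converse, start from $\pi\in\Av(4321,2143)$. Avoidance of $4321$ forces the longest decreasing subsequence of $\pi$ to have length at most $3$, so by Dilworth's theorem $\pi$ is a union of at most three increasing subsequences; I would fix the canonical greedy left-to-right colouring into increasing colour classes $C_1,C_2,C_3$ (patience sorting). The point is to show that the ways these three increasing sequences can interleave, subject to $\pi$ also avoiding $2143$, are so restricted that the resulting point set always admits one of the four displayed griddings. The workhorse observations are: (i) the subpermutation induced by any two colour classes is a union of two increasing sequences, hence lies in $\Av(321)$; and (ii) $2143$-avoidance forbids two inversions in direct-sum position, which bounds how a colour class can ``alternate'' with the others and hence where points of distinct colours may be stacked. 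Combining (i) and (ii), one gets that, up to the obvious symmetries of the pattern order, the joint picture of $C_1,C_2,C_3$ falls into a short list of shapes; I would organise this as a case analysis on which pairs of colour classes are nested, stacked, or side by side, the four surviving cases matching the four displayed matrices. (An alternative route invokes the Huczynska--Vatter theorem directly: $\Av(4321,2143)$ contains no arbitrarily long sum of $21$'s, so the appropriately indecomposable permutations of the class embed in a monotone grid class, and one then peels off the sum-decomposable permutations -- very restricted since $2143=21\oplus 21$ -- and the skew-decomposable ones, reassembling the four-class union.)

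The main obstacle, deserving the most care, is \emph{completeness}: showing the case analysis is exhaustive, i.e. that no $(4321,2143)$-avoider can weave its three increasing subsequences into a configuration outside the four displayed grids, and simultaneously that each of the four grids is actually attained. By comparison, the containment direction and the bookkeeping of symmetries are routine.
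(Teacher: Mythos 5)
Your containment direction is essentially the paper's (the paper dismisses it ``by inspection''), and your criterion is workable once ``pairwise strictly south-east'' is weakened to the correct statement: a decreasing pair can come from two distinct cells sharing a row or a column, so the obstruction to $4321$ is a chain of four distinct cells with columns weakly increasing and rows weakly decreasing. That is a minor fix. The converse, however, is where the theorem lives, and your proposal does not actually prove it. The Dilworth/patience-sorting decomposition into three increasing colour classes $C_1,C_2,C_3$ is a legitimate and genuinely different starting point from the paper's (the paper instead anchors on the maximum $n$ and the largest element $m$ to its right, observes that the two resulting corner cells lie in $\Av(321,2143)$, and invokes Proposition~\ref{Av321-2143} to reduce to four explicit pictures). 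But your step from ``pairwise the colour classes lie in $\Av(321,2143)$ and $2143$-avoidance restricts alternation'' to ``the joint picture falls into a short list of shapes matching the four matrices'' is precisely the content of the theorem, and it is asserted rather than argued. Two increasing sequences in $\Av(321,2143)$ are not merely ``nested, stacked, or side by side''---that trichotomy is not exhaustive, and pairwise relationships among the $C_i$ do not determine the global gridding. Nothing in your sketch explains, for instance, how the class $\C$ (which by Lemma~\ref{two-descents} is the $\le 2$-descent part of the class, i.e.\ a decomposition into three increasing \emph{runs}, not three arbitrary increasing subsequences) would emerge from the greedy colouring, nor how one would separate the outcomes $\B$ versus $\D$, which in the paper requires marking specific points and a three-way split on the emptiness of the regions $I_3$ and $I_6$ in Figure~\ref{casea2}.

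Your fallback via Huczynska--Vatter has the same problem one level up: the class avoids $21\oplus 21=2143$ and contains no skew sum of four $12$'s (that would contain $4321$), so the theorem does apply and places the class inside \emph{some} monotone grid class, but it gives no control over which one and no route to the specific four-class union. In short: the skeleton is plausible and the easy half is fine, but the exhaustive case analysis---the entire substance of Theorem~\ref{maintheorem}, which occupies several pages and six figures in the paper---is deferred to a case split whose completeness you have not established and whose cases you have not matched to $\A$, $\B$, $\C$, $\D$. To repair this you would need either to carry out that analysis in full, or to adopt an anchoring device (such as the paper's choice of $n$ and $m$ together with Proposition~\ref{Av321-2143}) that forces the case list to be finite and checkable.
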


\begin{center}
\begin{tabular}{ccccccc}
\psset{xunit=0.01in, yunit=0.01in} \psset{linewidth=0.005in}
\begin{pspicture}(0,-50)(60,60)
\drawlines(3,3)
\upline(1,1)
\upline(2,3)
\upline(3,2)
\upline(3,3)
\end{pspicture}
&\rule{10pt}{0pt}&
\psset{xunit=0.01in, yunit=0.01in} \psset{linewidth=0.005in}
\begin{pspicture}(0,-50)(60,60)
\drawlines(3,3)
\upline(1,1)
\upline(1,2)
\upline(2,1)
\upline(3,3)
\end{pspicture}
&\rule{10pt}{0pt}&
\psset{xunit=0.01in, yunit=0.01in} \psset{linewidth=0.005in}
\begin{pspicture}(0,-40)(160,80)
\drawlines(8,4)
\upline(1,1)
\upline(2,3)
\upline(3,4)
\upline(4,2)
\upline(5,3)
\upline(6,1)
\upline(7,2)
\upline(8,4)
\end{pspicture}
&\rule{10pt}{0pt}&
\psset{xunit=0.01in, yunit=0.01in} \psset{linewidth=0.005in}
\begin{pspicture}(0,0)(80,160)
\drawlines(4,8)
\upline(1,1)
\upline(3,2)
\upline(4,3)
\upline(2,4)
\upline(3,5)
\upline(1,6)
\upline(2,7)
\upline(4,8)
\end{pspicture}
\\
$\A$&&$\B$&&$\C$&&$\D$
\end{tabular}
\end{center}

Classes $\A$ and $\B$ are self-inverse and each is the reverse-complement of the other,  while classes $\C$ and $\D$ are each equal to their reverse-complement and are the inverses of one another.  The proof uses the following result from \cite{atkinson:restricted-perm:}.
\begin{proposition}\label{Av321-2143}
$\Av(321,2143)$ is the union of two monotone grid classes, namely those whose gridding matrices are
$(1\ 1)$
 and its transpose.
\end{proposition}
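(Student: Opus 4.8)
The plan is to identify the two grid classes explicitly, dispose of the easy inclusion by hand, and deduce the reverse inclusion from the substitution decomposition once the simple permutations of the class are known. First I would record three preliminary observations: $\Grid(1\ 1)$ is exactly the set of permutations that are the concatenation of (at most) two increasing runs, equivalently the permutations with at most one descent; $\Grid(1\ 1)^{T}=\Grid(1\ 1)^{-1}$, since transposing a gridding matrix inverts the permutations it grids; and $\Av(321,2143)$ is closed under inversion, because $321$ and $2143$ are involutions. Consequently the asserted identity is itself symmetric under inversion, and it suffices to prove $\Grid(1\ 1)\subseteq\Av(321,2143)$ together with $\Av(321,2143)\subseteq\Grid(1\ 1)\cup\Grid(1\ 1)^{-1}$.

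The forward inclusion is immediate: a concatenation of two increasing runs contains no decreasing subsequence of length $3$, so it avoids $321$; and in such a permutation every inversion has its left point in the first run and its right point in the second, so two inversions can never be positionally separated in the manner a $2143$ requires. Taking inverses gives $\Grid(1\ 1)^{-1}\subseteq\Av(321,2143)$ as well, and hence the union lies in $\Av(321,2143)$.

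For the reverse inclusion the essential input is the list of simple permutations in $\Av(321,2143)$, which I claim is $\{1,\,12,\,21,\,2413,\,3142\}$; equivalently, every simple permutation in the class has length at most $4$. Since a simple $321$-avoider is in particular a merge of two increasing sequences, I would expect a short combinatorial argument that such a permutation of length at least $5$ must contain $2143$ (this is consistent with the known description of the simple members of $\Av(321)$, whose nontrivial representatives are increasing oscillations, an oscillation of length at least $5$ containing $24153$ and hence $2143$). This is the step I expect to demand genuine care; everything that follows is bookkeeping.

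Granting the classification, I would finish by induction on $|\pi|$ for $\pi\in\Av(321,2143)$, organised by its substitution decomposition. If $\pi=\sigma[\alpha_1,\dots,\alpha_4]$ with $\sigma\in\{2413,3142\}$, then avoidance of $321$ forces every $\alpha_i$ to be increasing (a copy of $21$ inside some $\alpha_i$, together with a point of a part lying in a higher and earlier cell of $\sigma$, would produce a $321$), and one then checks directly that $2413[\iota_{a_1},\dots,\iota_{a_4}]$ has exactly one descent, hence lies in $\Grid(1\ 1)$, while $3142[\iota_{a_1},\dots,\iota_{a_4}]$ has exactly one inverse-descent, hence lies in $\Grid(1\ 1)^{-1}$; here $\iota_k$ denotes the increasing permutation of length $k$. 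If $\pi$ is skew-decomposable then avoidance of $321$ forces $\pi=\iota_a\ominus\iota_b$, which has a single descent. If $\pi=\alpha_1\oplus\cdots\oplus\alpha_r$ is the finest sum-decomposition, then avoidance of $2143$ forces all but at most one summand to be increasing, and since prepending or appending an increasing block changes neither the number of descents nor the number of inverse-descents, applying the inductive hypothesis to the exceptional summand places $\pi$ in $\Grid(1\ 1)\cup\Grid(1\ 1)^{-1}$. The base case $|\pi|\le1$ is trivial, completing the argument.
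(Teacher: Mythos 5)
Your argument is essentially correct, but it is worth saying up front that the paper does not prove this proposition at all: it is imported verbatim from Atkinson's earlier work \cite{atkinson:restricted-perm:}, so there is no internal proof to compare against. Your route --- identify $\Grid(1\ 1)$ with the permutations having at most one descent and its transpose with those whose inverse has at most one descent, check the easy inclusion directly, and then run an induction over the substitution decomposition --- is a legitimate self-contained alternative, and it is very much in the spirit of how the rest of the paper operates (it is essentially the ``enumerate the simples, then control their inflations'' template of Sections 4 and 5 applied to a smaller class). The case analysis is sound: the inflation case correctly forces all parts of $2413[\cdots]$ and $3142[\cdots]$ to be increasing via $321$-avoidance and then counts descents (respectively inverse descents); the skew-decomposable case collapses to $\iota_a\ominus\iota_b$; and the sum-decomposable case correctly uses $2143$-avoidance to isolate a single non-trivial summand. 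What your proof buys is self-containment; what it costs is that the whole weight now rests on the classification of the simple permutations of $\Av(321)$ as $1$, $12$, $21$ and the increasing oscillations, which is itself a nontrivial imported fact (from Albert--Atkinson), so you have really traded one citation for another rather than eliminated the external input. Two small points to tidy: first, your parenthetical that every increasing oscillation of length at least $5$ contains $24153$ is false for $31524$, which is an increasing oscillation of length $5$ not containing $24153$; the correct statement is that every increasing oscillation of length at least $5$ contains one of $24153$ or $31524$, and each of these contains $2143$ (e.g.\ $2153$ inside $24153$ and $3154$ inside $31524$), which is all you need. Second, in the sum-decomposable case you should note explicitly that the exceptional summand is a proper subpermutation (if every summand were increasing, $\pi$ itself would be increasing and there is nothing to prove), so the inductive hypothesis genuinely applies.
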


\begin{lemma}\label{two-descents}
Class $\C$ is exactly the intersection of $\Av(2143, 4321)$ with the class of permutations having at most 2 descents.
\end{lemma}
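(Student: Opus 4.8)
The plan is to prove the two inclusions separately, working directly with the cell diagram for $\C$. First I would identify $\C$ concretely: reading the cells of $\C$ from the displayed $8\times 4$ diagram, a permutation $\sigma\in\C$ is partitioned into eight monotone-increasing blocks, arranged so that the four rows (bottom to top) are read in a fixed left-to-right order of columns and likewise for the four columns. The key observation is that within each column the non-empty cells stack so that, reading the column bottom-to-top, we encounter them left-to-right, and similarly within each row; consequently the concatenation of the eight increasing blocks in ``column order'' breaks into exactly a bounded number of maximal ascending runs. Concretely, I would check that any $\sigma\in\C$ can be written as a concatenation of at most three increasing sequences (equivalently has at most two descents): the descents can only occur at the boundary between the portion of $\sigma$ lying in one column-group and the next, and the geometry of the four columns of $\C$ forces at most two such boundaries to be genuine descents. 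This gives $\C\subseteq\{\text{at most 2 descents}\}$, and since $\C$ is by construction a subclass of $\Av(2143,4321)$ (it is one of the four grid classes whose union is that class, by Theorem~\ref{maintheorem}), we get $\C\subseteq \Av(2143,4321)\cap\{\le 2\text{ descents}\}$.

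For the reverse inclusion, suppose $\sigma\in\Av(2143,4321)$ has at most two descents. By Theorem~\ref{maintheorem}, $\sigma$ lies in $\A\cup\B\cup\C\cup\D$, so it suffices to show that any permutation in $\A$, $\B$, or $\D$ with at most two descents already lies in $\C$. Here I would use the explicit diagrams: a long permutation in $\A$ (whose diagram is essentially three increasing cells forming an anti-diagonal staircase) accumulates a descent each time the plot jumps from one cell down to the next, so a permutation in $\A$ that is \emph{not} also griddable inside $\C$ must have too many descents; the same for $\B$ and $\D$. The cleanest way to run this is to observe that $\A\cap\{\le 2\text{ descents}\}$, $\B\cap\{\le 2\text{ descents}\}$ and $\D\cap\{\le 2\text{ descents}\}$ are each contained in $\C$ because a bound on descents bounds how many of the ``staircase steps'' in the $\A$-, $\B$- or $\D$-gridding can actually be used, and any permutation using only boundedly many steps there re-grids into the wider $8\times4$ pattern of $\C$. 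Combining, $\Av(2143,4321)\cap\{\le 2\text{ descents}\}\subseteq\C$.

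The main obstacle is the bookkeeping in the reverse inclusion: one must be careful that ``at most two descents'' is used correctly, since a permutation can sit in $\C$ and still have more than two descents (the inequality in the first inclusion is strict for most elements of $\C$), so the statement is \emph{not} that $\C$ equals the $2$-descent permutations of the union, but that the $2$-descent permutations of the union all fall into the $\C$-component. I would handle this by a direct regridding argument rather than a counting argument: given $\sigma$ with $\le 2$ descents, write $\sigma=\alpha_1\alpha_2\alpha_3$ with each $\alpha_i$ increasing (allowing empty pieces), and then show that the three increasing pieces, together with the $2143$- and $4321$-avoidance, can be slotted into the eight cells of the $\C$-diagram. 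The $4321$-avoidance bounds the lengths of certain decreasing interactions between the $\alpha_i$, and $2143$-avoidance controls the relative positions of $\alpha_1$ and $\alpha_3$, and these two constraints are exactly what is needed to place the points consistently in the $\C$-gridding; verifying this placement is the one genuinely case-based step, but it is routine once the three-run decomposition is fixed.
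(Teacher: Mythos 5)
Your forward inclusion is fine, but the reverse inclusion has two problems. The first is structural: your main route invokes Theorem \ref{maintheorem} to place $\sigma$ in $\A\cup\B\cup\C\cup\D$ and then tries to push the $\A$-, $\B$- and $\D$-cases into $\C$. In this paper that is circular --- Lemma \ref{two-descents} is proved precisely so that it can be cited (twice) inside the proof of Theorem \ref{maintheorem}, so the lemma must be established from the pattern restrictions alone, not from the decomposition of $\Av(2143,4321)$ into the four grid classes. Your parenthetical worry that ``a permutation can sit in $\C$ and still have more than two descents'' is also false, and contradicts the inclusion you have just argued: the column order of the eight cells of $\C$ visits the rows in the sequence $1,3,4,2,3,1,2,4$, which has exactly two drops, so every member of $\C$ has at most two descents.

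The second problem is that your fallback --- write $\sigma=\alpha_1\alpha_2\alpha_3$ as three increasing runs and regrid --- is indeed the paper's actual argument, but you stop exactly where the content is and call it ``routine.'' The missing steps are: (i) $4321$-avoidance is automatic here, since a $4321$ is a decreasing subsequence of length four and a permutation with three increasing runs has no decreasing subsequence longer than three, so only $2143$ does any work; (ii) any occurrence of $2143$ can be normalised so that its ``$1$'' is the first (lowest) element $a$ of $\alpha_2$ and its ``$4$'' is the last (highest) element $d$ of $\alpha_2$, because every element of $\alpha_1$ precedes $a$ and every element of $\alpha_3$ follows $d$; (iii) hence $2143$-avoidance says exactly that every ``potential $2$'' (element of $\alpha_1$ with value strictly between $a$ and $d$) lies above every ``potential $3$'' (element of $\alpha_3$ with value in that range), and the horizontal line separating these two sets, together with the lines at heights $a$ and $d$ and the two run boundaries, is precisely the $8\times 4$ gridding of $\C$. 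Without the normalisation in (ii) there is no canonical place to make these cuts, and the placement you defer as routine does not go through.
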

\begin{proof}
Figure \ref{2descents} shows a permutation with 2 descents.  If it contains a 2143 pattern then there will be one where the 1 of the pattern matches the element $a$ and the 4 matches the element $d$.  The diagram identifies the potential 2's and 3's and, when we impose the condition that no 2 is below a 3, we obtain class $\C$.
\end{proof}

 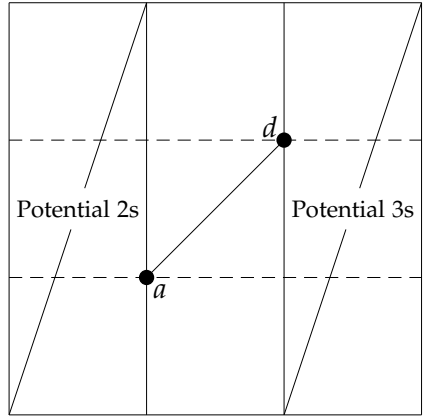
\begin{figure}
\centering
\psset{xunit=0.012in, yunit=0.012in} \psset{linewidth=0.005in}
\begin{pspicture}(0,0)(200,200)
\multido{\i=10+60}{4}{\psline[linestyle=solid,linewidth=0.005in](\i,10)(\i,190)}
\psline[linestyle=solid,linewidth=0.005in](10,10)(190,10)
\psline[linestyle=dashed,linewidth=0.005in](10,70)(190,70)
\psline[linestyle=dashed,linewidth=0.005in](10,130)(190,130)
\psline[linestyle=solid,linewidth=0.005in](10,190)(190,190)
\pscircle*(70,70){0.04in}
\pscircle*(130,130){0.04in}
\rput(76,64){$a$}
\rput(124,136){$d$}
\psline[linestyle=solid,linewidth=0.005in](70,70)(130,130)

\psline[linestyle=solid,linewidth=0.005in](10,10)(37,91)
\psline[linestyle=solid,linewidth=0.005in](43,109)(70,190)

\psline[linestyle=solid,linewidth=0.005in](130,10)(157,91)
\psline[linestyle=solid,linewidth=0.005in](163,109)(190,190)
\rput(40,100){{\footnotesize Potential 2s}}
\rput(160,100){{\footnotesize Potential 3s}}
\end{pspicture}
\caption{$C=\Av(2143, 4321)\cap$ 2 descents}
\label{2descents}
\end{figure}

Notice that $\A$, $\B$, $\C$ and $\D$ are all contained in $\Av(4321,2143)$ (by inspection,  they cannot contain the patterns $4321$ or $2143$).  So to prove Theorem \ref{maintheorem} it is sufficient to prove that  $\Av(4321,2143)$ is contained in their union.
Let $\pi$ be an arbitrary permutation of $\Av(4321,2143)$ of length $n$.  If $n$ is the final element of $\pi$ then, by induction, $\pi-n$ lies in one of $\A$, $\B$, $\C$ or $\D$ and, from the forms of these classes, so does $\pi$.  So we may assume that there is at least one element to the right  of $n$.  This gives rise to the situation of Figure \ref{barebones} where $m$ is the largest element to the right of $n$.

\begin{figure}
\centering
\psset{xunit=0.01in, yunit=0.01in} \psset{linewidth=0.005in}
\begin{pspicture}(0,0)(80,90)
\multido{\i=0+40}{3}{\psline[linestyle=solid,linewidth=0.005in](\i,0)(\i,80)}
\multido{\i=0+40}{3}{\psline[linestyle=solid,linewidth=0.005in](0,\i)(80,\i)}
\pscircle*(40,80){0.04in}
\pscircle*(60,40){0.04in}
\rput(40,90){$n$}
\rput(60,50){$m$}
\psline[linestyle=solid,linewidth=0.02in](3,3)(37,37)
\rput(20,60){$A$}
\rput(60,20){$B$}
\end{pspicture}

\caption{First decomposition}
\label{barebones}
\end{figure}
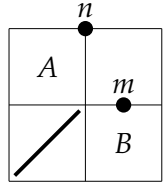

In  Figure \ref{barebones} the bottom-left cell is increasing because $\pi$ avoids $2143$ and, by definition of $m$, the top-right cell is empty.  Also, both cells $A$ and $B$ lie in $\Av(321,2143)$.  Furthermore, at least one of $A,B$ is increasing because of the $4321$-avoidance.  So, by Proposition \ref{Av321-2143}, we get the four cases shown in Figure \ref{InitialCases}.  Of these cases a) and d) are mutually inverse as are cases b) and c), so we shall consider only cases a) and b).

In case a) we begin by noting that if the two increasing cells on the left were a single increasing sequence then Lemma \ref{two-descents} would apply to show $\pi\in\C$.  So we may assume they contain at least one descent.  Now we impose the $4321$-avoidance condition.  By the preceding remark the situation is as shown in Figure \ref{casea1} with $d$ preceding $c$.  If such a permutation had a $4321$ pattern then it would have one where $d$ matched $4$,  $c$ matched $3$, $a$ matched $1$ and all possible 2's would lie between $c$ and $a$ in both position and value.    Since there is no $4321$ pattern, either $a$ is above $c$ or $a$ is below $c$ but the set of possible $2$s is empty giving one of the diagrams in Figure \ref{casea2}.

\begin{figure}
\centering
\begin{tabular}{ccccccc}
\psset{xunit=0.008in, yunit=0.008in} \psset{linewidth=0.005in}
\begin{pspicture}(0,0)(120,120)
\multido{\i=0+40}{4}{\psline(\i,0)(\i,80)}
\multido{\i=0+40}{3}{\psline(0,\i)(120,\i)}
\psline[linestyle=solid,linewidth=0.02in](3,3)(37,37)
\psline[linestyle=solid,linewidth=0.02in](3,43)(37,77)
\psline[linestyle=solid,linewidth=0.02in](43,3)(77,37)
\psline[linestyle=solid,linewidth=0.02in](83,3)(117,37)
\end{pspicture}
&\rule{10pt}{0pt}&
\psset{xunit=0.008in, yunit=0.008in} \psset{linewidth=0.005in}
\begin{pspicture}(0,0)(120,120)
\multido{\i=20+40}{3}{\psline(\i,0)(\i,120)}
\psline(20,0)(100,0)
\psline(60,40)(100,40)
\psline(20,80)(100,80)
\psline(20,120)(100,120)
\psline[linestyle=solid,linewidth=0.02in](23,3)(57,77)
\psline[linestyle=solid,linewidth=0.02in](63,3)(97,37)
\psline[linestyle=solid,linewidth=0.02in](63,43)(97,77)
\psline[linestyle=solid,linewidth=0.02in](23,83)(57,117)
\end{pspicture}
&\rule{10pt}{0pt}&
\psset{xunit=0.008in, yunit=0.008in} \psset{linewidth=0.005in}
\begin{pspicture}(0,0)(120,120)
\psline(0,0)(0,80)
\psline(40,40)(40,80)
\psline(80,0)(80,80)
\psline(120,0)(120,80)
\multido{\i=0+40}{3}{\psline(0,\i)(120,\i)}
\psline[linestyle=solid,linewidth=0.02in](3,3)(77,37)
\psline[linestyle=solid,linewidth=0.02in](3,43)(37,77)
\psline[linestyle=solid,linewidth=0.02in](43,43)(77,77)
\psline[linestyle=solid,linewidth=0.02in](83,3)(117,37)
\end{pspicture}
&\rule{10pt}{0pt}&
\psset{xunit=0.008in, yunit=0.008in} \psset{linewidth=0.005in}
\begin{pspicture}(0,0)(120,120)
\multido{\i=20+40}{3}{\psline(\i,0)(\i,120)}
\multido{\i=0+40}{4}{\psline(20,\i)(100,\i)}
\psline[linestyle=solid,linewidth=0.02in](23,3)(57,37)
\psline[linestyle=solid,linewidth=0.02in](23,43)(57,77)
\psline[linestyle=solid,linewidth=0.02in](63,3)(97,37)
\psline[linestyle=solid,linewidth=0.02in](23,83)(57,117)
\end{pspicture}
\\
(a)&&(b)&&(c)&&(d)
\end{tabular}
\caption{Four cases}
\label{InitialCases}
\end{figure}
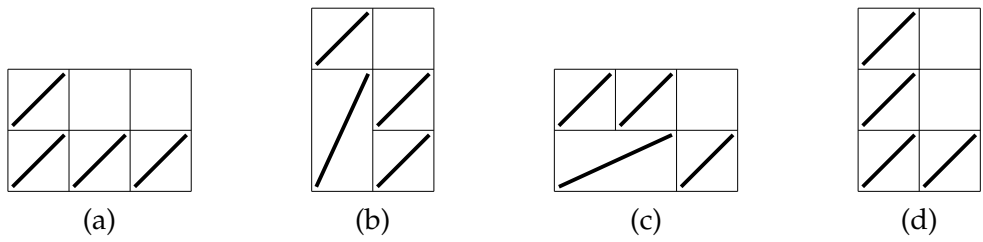

\begin{figure}
\centering
\psset{xunit=0.01in, yunit=0.01in} \psset{linewidth=0.005in}
\begin{pspicture}(0,0)(100,130)
\multido{\i=10+40}{4}{\psline[linestyle=solid,linewidth=0.005in](\i,20)(\i,100)}
\multido{\i=20+40}{3}{\psline[linestyle=solid,linewidth=0.005in](10,\i)(130,\i)}

\psline[linestyle=solid,linewidth=0.02in](17,67)(43,93)
\psline[linestyle=solid,linewidth=0.02in](17,27)(43,53)
\psline[linestyle=solid,linewidth=0.02in](53,23)(87,57)
\psline[linestyle=solid,linewidth=0.02in](97,27)(127,57)
\pscircle*(17,67){0.04in}
\pscircle*(43,53){0.04in}
\pscircle*(97,27){0.04in}
\rput(17,80){$d$}
\rput(43,42){$c$}
\rput(108,26){$a$}
\end{pspicture}
\caption{Case a) possible occurrences of $4321$}
\label{casea1}
\end{figure}
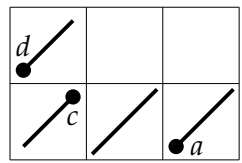

\begin{figure}
\centering
\psset{xunit=0.013in, yunit=0.013in} \psset{linewidth=0.005in}
\begin{pspicture}(0,0)(340,140)
\multido{\i=20+40}{4}{\psline[linestyle=solid,linewidth=0.005in](0,\i)(160,\i)}
\psline[linestyle=solid,linewidth=0.005in](0,20)(0,140)
\psline[linestyle=solid,linewidth=0.005in](40,20)(40,140)
\psline[linestyle=solid,linewidth=0.005in](80,20)(80,100)
\psline[linestyle=solid,linewidth=0.005in](120,60)(120,100)
\psline[linestyle=solid,linewidth=0.005in](160,20)(160,140)

\psline[linestyle=solid,linewidth=0.02in](3,23)(36,56)
\psline[linestyle=solid,linewidth=0.02in](4,104)(37,137)
\psline[linestyle=solid,linewidth=0.02in](43,23)(77,57)
\psline[linestyle=solid,linewidth=0.02in](83,63)(117,97)
\psline[linestyle=solid,linewidth=0.02in](123,63)(157,97)

\pscircle*(36,56){0.04in}
\rput(33,65){$c$}
\pscircle*(4,104){0.04in}
\rput(13,105){$d$}

\multido{\i=20+40}{4}{\psline[linestyle=solid,linewidth=0.005in](180,\i)(340,\i)}
\psline[linestyle=solid,linewidth=0.005in](180,20)(180,140)
\psline[linestyle=solid,linewidth=0.005in](220,20)(220,140)
\psline[linestyle=solid,linewidth=0.005in](240,20)(240,100)
\psline[linestyle=solid,linewidth=0.005in](280,20)(280,100)
\psline[linestyle=solid,linewidth=0.005in](300,20)(300,100)
\psline[linestyle=solid,linewidth=0.005in](340,20)(340,140)
\psline[linestyle=solid,linewidth=0.005in](220,40)(340,40)

\psline[linestyle=solid,linewidth=0.02in](183,23)(216,56)
\psline[linestyle=solid,linewidth=0.02in](183,103)(220,140)
\psline[linestyle=solid,linewidth=0.02in](243,63)(277,97)
\psline[linestyle=solid,linewidth=0.02in](303,63)(337,97)
\psline[linestyle=solid,linewidth=0.02in](223,23)(237,37)
\psline[linestyle=solid,linewidth=0.02in](284,44)(297,57)

\pscircle*(216,56){0.04in}
\rput(213,65){$c$}
\pscircle*(183,103){0.04in}
\rput(193,105){$d$}
\pscircle*(284,44){0.04in}
\rput(292,45){$a$}
\rput(236, 27){$I_1$}
\rput(265, 72){$I_2$}

\end{pspicture}
\caption{Case a) with $4321$-avoidance}
\label{casea2}
\end{figure}
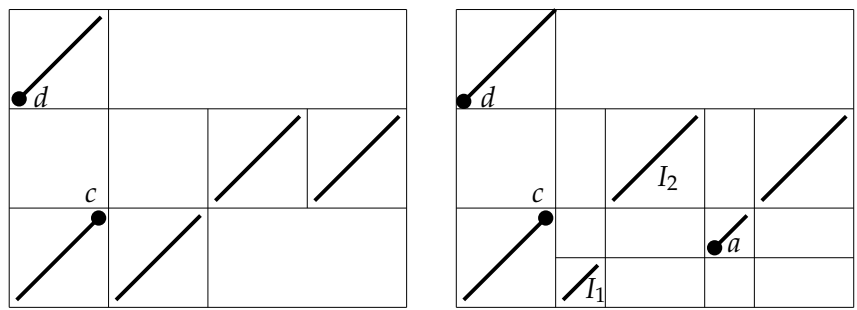

%

To conclude the analysis of case a) we have to impose the $2143$-avoidance conditions on the two diagrams of Figure \ref{casea2}.  In the case of the first diagram if there is a $2143$ pattern then there would be one where $c$ matches $2$.  So to avoid 2143  either the increasing cell below and to the right of $c$ must be empty or the two rightmost increasing cells have to contain a single increasing sequence.  It then follows by inspection that $\pi\in\B$.

%
For the second diagram of Figure \ref{casea2} the argument is a little more complicated.  The only way that 2143 could be contained as a pattern would be for the 1 to lie in cell $I_1$ and the 4 to lie in cell $I_2$.  If either cell $I_1$ or $I_2$ is empty then $\pi\in\B$ so we shall now assume both are non-empty.  If there is a  $2143$ pattern at all there would be one where the 1 is the lowest element of $I_1$ and the 4 is the highest element of $I_2$. We now consider a more detailed diagram (Figure \ref{nonemptyI1I2}) where these two points are displayed as  black circles.

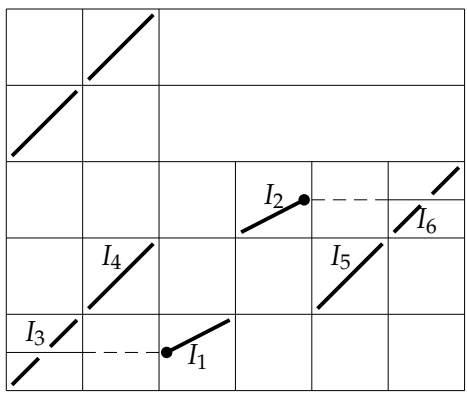
\begin{figure}
\centering
\psset{xunit=0.01in, yunit=0.01in} \psset{linewidth=0.005in}
\begin{pspicture}(0,0)(220,240)
\multido{\i=20+40}{6}{\psline[linestyle=solid,linewidth=0.005in](0,\i)(240,\i)}
\psline[linestyle=solid,linewidth=0.005in](0,20)(0,220)
\psline[linestyle=solid,linewidth=0.005in](40,20)(40,220)
\psline[linestyle=solid,linewidth=0.005in](80,20)(80,220)
\psline[linestyle=solid,linewidth=0.005in](120,20)(120,140)
\psline[linestyle=solid,linewidth=0.005in](160,20)(160,140)
\psline[linestyle=solid,linewidth=0.005in](200,20)(200,140)
\psline[linestyle=solid,linewidth=0.005in](240,20)(240,220)
\psline[linestyle=dashed,linewidth=0.005in](160,120)(200,120)
\psline[linestyle=solid,linewidth=0.005in](200,120)(240,120)
\psline[linestyle=solid,linewidth=0.005in](0,40)(40,40)
\psline[linestyle=dashed,linewidth=0.005in](40,40)(80,40)


\psline[linestyle=solid,linewidth=0.02in](3,23)(17,37)
\psline[linestyle=solid,linewidth=0.02in](23,43)(37,57)
\psline[linestyle=solid,linewidth=0.02in](43,63)(77,97)

\psline[linestyle=solid,linewidth=0.02in](3,143)(37,177)
\psline[linestyle=solid,linewidth=0.02in](43,183)(77,217)
\psline[linestyle=solid,linewidth=0.02in](84,40)(117,57)
\psline[linestyle=solid,linewidth=0.02in](123,103)(156,120)
\psline[linestyle=solid,linewidth=0.02in](163,63)(197,97)
\psline[linestyle=solid,linewidth=0.02in](203,103)(217,117)
\psline[linestyle=solid,linewidth=0.02in](223,123)(237,137)

\pscircle*(84,40){0.03in}
\pscircle*(156,120){0.03in}

\rput(100,38){$I_1$}
\rput(140,122){$I_2$}
\rput(15,50){$I_3$}
\rput(55,90){$I_4$}
\rput(175,90){$I_5$}
\rput(220,109){$I_6$}

\end{pspicture}
\caption{Case a): non-empty $I_1$ and $I_2$}
\label{nonemptyI1I2}
\end{figure}

In order to ensure that the pattern 2143 is avoided in Figure 6, some restrictions must be imposed. Specifically, there are three possibilities:

\begin{itemize}
\item $I_6$ is non-empty and therefore both $I_3$ and $I_4$ are empty.  Then $\pi\in \B$.
\item $I_3$ is non-empty and therefore both $I_5$ and $I_6$ are empty.  Again $\pi\in \B$.
\item $I_3$ and $I_6$ are both empty and  all points of $I_4$ are larger than all points of $I_5$.  Then $\pi$ is the inverse of a permutation with at most 2 descents and so, by Lemma \ref{two-descents}, lies in class $\D$.
\end{itemize}

Now we turn to case b) in Figure \ref{InitialCases}.  Again we begin by imposing the $4321$-avoidance condition.  This is automatic if the two leftmost increasing cells form a single increasing region.  If they do not and there were a $4321$ pattern at all then we could take one where the point matching the $3$ was the top point $c$ of the lower left increasing cell; then the points $2$ and $1$ would have to be furnished from the two rightmost increasing cells and so the higher of these would be greater than $c$.  Therefore $\pi$ would have one of the two forms shown in Figure \ref{b-cases}.

\begin{figure}
\centering
\psset{xunit=0.01in, yunit=0.01in} \psset{linewidth=0.005in}
\begin{pspicture}(0,0)(240,140)
\psline[linestyle=solid,linewidth=0.005in](0,20)(0,100)
\psline[linestyle=solid,linewidth=0.005in](40,20)(40,100)
\psline[linestyle=solid,linewidth=0.005in](80,20)(80,100)
\psline[linestyle=solid,linewidth=0.005in](120,20)(120,100)
\psline[linestyle=solid,linewidth=0.005in](0,20)(120,20)
\psline[linestyle=solid,linewidth=0.005in](0,60)(120,60)
\psline[linestyle=solid,linewidth=0.005in](0,100)(120,100)

\psline[linestyle=solid,linewidth=0.02in](3,23)(37,57)
\psline[linestyle=solid,linewidth=0.02in](43,63)(77,97)
\psline[linestyle=solid,linewidth=0.02in](83,63)(117,97)
\psline[linestyle=solid,linewidth=0.02in](84,24)(116,56)
\pscircle*(84,24){0.04in}
\pscircle*(116,56){0.04in}

\psline[linestyle=solid,linewidth=0.005in](160,20)(160,140)
\psline[linestyle=solid,linewidth=0.005in](200,20)(200,140)
\psline[linestyle=solid,linewidth=0.005in](240,20)(240,140)
\psline[linestyle=solid,linewidth=0.005in](160,20)(240,20)
\psline[linestyle=solid,linewidth=0.005in](160,60)(240,60)
\psline[linestyle=solid,linewidth=0.005in](160,100)(240,100)
\psline[linestyle=solid,linewidth=0.005in](160,140)(240,140)

\psline[linestyle=solid,linewidth=0.02in](163,23)(196,56)
\psline[linestyle=solid,linewidth=0.02in](163,103)(197,137)
\psline[linestyle=solid,linewidth=0.02in](204,24)(236,56)
\psline[linestyle=solid,linewidth=0.02in](203,63)(237,97)

\pscircle*(204,24){0.04in}
\pscircle*(236,56){0.04in}
\pscircle*(196,56){0.04in}
\rput(190,65){$c$}

\end{pspicture}
\caption{Case b): avoiding $4321$}
\label{b-cases}
\end{figure}
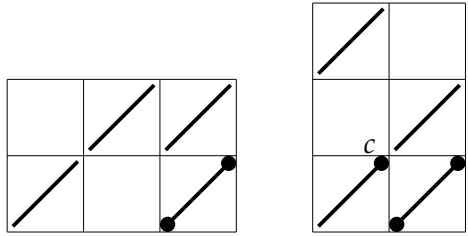


Finally we impose the 2143-avoidance condition.  If there were a 2143 pattern then, in both of the diagrams in Figure \ref{b-cases}, the two marked points of the lower right cell could be taken as the 1 and 3 points.  So there would be a 2143 pattern if and only if there were a separating 2 to the left together with a separating 4 above these marked points.  In other words, because there is no 2143 pattern, one of the two dotted regions in each of the two diagrams of Figure \ref{b-cases-2} must be empty.

\begin{figure}
\begin{center}
\psset{xunit=0.01in, yunit=0.01in} \psset{linewidth=0.005in}
\begin{pspicture}(0,0)(240,140)
\psline[linestyle=solid,linewidth=0.005in](0,20)(0,100)
\psline[linestyle=solid,linewidth=0.005in](40,20)(40,100)
\psline[linestyle=solid,linewidth=0.005in](80,20)(80,100)
\psline[linestyle=solid,linewidth=0.005in](120,20)(120,100)
\psline[linestyle=solid,linewidth=0.005in](0,20)(120,20)
\psline[linestyle=solid,linewidth=0.005in](0,60)(120,60)
\psline[linestyle=solid,linewidth=0.005in](0,100)(120,100)

\psline[linestyle=solid,linewidth=0.02in](3,23)(13,33)
\psline[linestyle=dotted,linewidth=0.02in](13,33)(27,47)
\psline[linestyle=solid,linewidth=0.02in](27,47)(37,57)
\psline[linestyle=solid,linewidth=0.02in](43,63)(77,97)
\psline[linestyle=solid,linewidth=0.02in](83,63)(93,73)
\psline[linestyle=dotted,linewidth=0.02in](93,73)(107,87)
\psline[linestyle=solid,linewidth=0.02in](107,87)(117,97)
\psline[linestyle=solid,linewidth=0.02in](93,33)(107,47)
\pscircle*(93,33){0.04in}
\pscircle*(107,47){0.04in}

\psline[linestyle=solid,linewidth=0.005in](160,20)(160,140)
\psline[linestyle=solid,linewidth=0.005in](200,20)(200,140)
\psline[linestyle=solid,linewidth=0.005in](240,20)(240,140)
\psline[linestyle=solid,linewidth=0.005in](160,20)(240,20)
\psline[linestyle=solid,linewidth=0.005in](160,60)(240,60)
\psline[linestyle=solid,linewidth=0.005in](160,100)(240,100)
\psline[linestyle=solid,linewidth=0.005in](160,140)(240,140)

\psline[linestyle=solid,linewidth=0.02in](163,23)(173,33)
\psline[linestyle=dotted,linewidth=0.02in](173,33)(187,47)
\psline[linestyle=solid,linewidth=0.02in](187,47)(197,57)
\psline[linestyle=solid,linewidth=0.02in](163,103)(197,137)
\psline[linestyle=solid,linewidth=0.02in](213,33)(227,47)
\psline[linestyle=solid,linewidth=0.02in](203,63)(213,73)
\psline[linestyle=dotted,linewidth=0.02in](213,73)(227,87)
\psline[linestyle=solid,linewidth=0.02in](227,87)(237,97)

\pscircle*(213,33){0.04in}
\pscircle*(227,47){0.04in}

\psline[linestyle=dashed,linewidth=0.005in](0,33)(93,33)
\psline[linestyle=dashed,linewidth=0.005in](0,47)(107,47)
\psline[linestyle=dashed,linewidth=0.005in](93,33)(93,100)
\psline[linestyle=dashed,linewidth=0.005in](107,47)(107,100)

\psline[linestyle=dashed,linewidth=0.005in](160,33)(213,33)
\psline[linestyle=dashed,linewidth=0.005in](160,47)(227,47)
\psline[linestyle=dashed,linewidth=0.005in](213,33)(213,140)
\psline[linestyle=dashed,linewidth=0.005in](227,47)(227,140)

\end{pspicture}
\caption{Case b): avoiding $2143$}
\label{b-cases-2}
\end{center}
\end{figure}
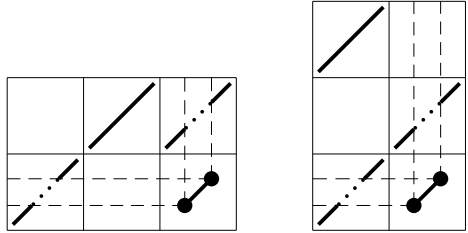

Then, by inspection, we have the following conclusions:

\begin{itemize}
\item If the dotted region in the bottom left cell of the left diagram is empty $\pi\in \A$.
\item If the dotted region in the top right cell of the left diagram is empty $\pi\in \C$.
\item If the dotted region in the bottom left cell of the right diagram is empty $\pi\in \D$.
\item If the dotted region in the middle right cell of the right diagram is empty $\pi\in \B$.
\end{itemize}

In all cases, $\pi$ lies in one of $\A$, $\B$, $\C$ or $\D$.

%
%
%
%

\subsection{The enumeration of Av(2143, 4321)}

To enumerate the class $\Av(2143, 4321)$ we will enumerate the individual classes $\A$, $\B$, $\C$, $\D$, and then their various intersections with one another so that the final result can be computed by inclusion-exclusion. This will be accomplished by first finding regular languages in one to one correspondence with each of them. As noted earlier, the existence of such languages follows from general results of \cite{albert:geometric-grid-:}, but the classes considered here are just simple enough to be handled directly and more easily by ad hoc methods.  Finding regular languages that correspond to their intersections is then quite straightforward provided that we know the bases of each one. This is because the correspondence between forest grid classes and words is such that subpermutations correspond to subwords, so to find a regular language corresponding to the intersection of a class $\X$ and $\Av(F)$ for some finite set of permutations $F$, it suffices to take the intersection of a regular language for $\X$ with the complement of the language consisting of those words $w$ that contain a subword $v$ which encodes an element of $F$ (this language is easily seen to be regular). Finally, each of the regular languages can be enumerated by standard variations of the transfer matrix technique.

We first exhibit (and justify) the basis for $\B$:
\[
\B = \Av(2143, 4321, 35142, 35214, 35241,  43152, 53142).
\]

It is routine to check that none of the permutations on the right hand side belong to $\B$, so to verify this result we must show that any permutation that avoids them all belongs to $\B$. Suppose that this were false and take a minimal counterexample $\pi$. Then $\pi$ does not end with its maximum since $\B$ is closed under ``append a new maximum''. Furthermore, $\pi$ contains $321$ since the characterisation of $\Av(321, 2143)$ given in Proposition \ref{Av321-2143} shows that this class is a subclass of $\B$. Take a particular copy, $cba$ of $321$ in $\pi$ chosen so that $b$ is the largest $2$ of any $321$, while $a$ and $c$ are the smallest 1's and 3's respectively that form a 321 with $b$.

Now a relatively simple case analysis will be used to show that in fact $\pi \in \B$, thus obtaining a contradiction. We first consider the structure of $\pi$ to the right of $a$. Here, there can be no element $d$, larger than $c$. For if there were such an element, all $e > d$ would have to follow $d$ (else one of  4321, 35214, or 2143 would occur in $\pi$) in increasing order (else 2143), and so $\pi$ would end with its maximum. Furthermore, there can be no element smaller than $a$ (else 4321), and elements intermediate between $a$ and $c$ must occur in increasing order (else 4321, 53142, or 2143). Now we turn our attention to the left of $a$. By the choices of $c$ and $b$, and 4321-avoidance, there are no elements here whose value is intermediate between $b$ and $c$. Likewise by the choices of $b$ and $a$ and 4321-avoidance, there are no elements whose position lies between $b$ and $a$ and whose value lies below $c$. Now, in a similar fashion as above, it is easy to argue that the elements to the left of and below $b$ occur in increasing order, as do those to the right of and above $c$. Thus $\pi \in \B$ (in fact in the part of $\B$ in the lower left $2 \times 2$ subgrid) as witnessed by a horizontal grid line just below $c$ and a vertical one just to the left of $a$.

This analysis also shows how to construct a regular language in one to one correspondence with $\B$. We will use an alphabet $\{ \a, \b, \c, \d \}$. These will correspond to the cells of the gridding with $\d$ being the upper right, $\b$ the lower left, $\a$ the second cell in the bottom row, and $\c$ the second cell in the left hand column. The reading order in each cell is left to right and hence bottom to top. We now describe how to associate each $\pi \in \B$ with a unique word over this language (and implicitly claim that the set of such words forms a regular language -- which will hopefully be clear from the construction). If $\pi$ does not involve $321$ then either it belongs to the class whose gridding matrix is $(1 \ 1)$ or its transpose (or both). If it belongs to the first of these but not the second, then it can be uniquely represented over the alphabet $\{\a, \b\}$, while in the opposite case it can be uniquely represented over $\{\b, \c\}$. If it belongs to the intersection and is not increasing, we can choose its representation to be of the form $\a^k \b^l$, while if it is increasing, we will simply use $\b^n$. If $\pi$ does involve $321$ then the argument of the preceding paragraph shows that we \emph{must} encode the largest such 2 with $\b$, the smallest corresponding $3$ with $\c$ and the smallest corresponding $1$ with $\a$. Moreover, all the letters encoding the remaining elements of $\pi$ are  uniquely determined by their position relative to this 321. The only latitude allowed in the arrangement of these letters is that adjacent $\c$ and $\a$ elements could be placed in either order, and also that only the total number of $\d$'s is relevant. Finally, to ensure the existence of a 321, there must be both a $\c$ and an $\a$ preceding some $\b$. Thus we can obtain a unique representative for each permutation involving 321 in this class by requiring that the encoding word be of the form $\{\a, \b, \c\}^* \d^*$, that it must contain the substrings $\c \b$ and  $\a \b $, and that it may not contain $\c \a$ as a factor.

Class $\A$ is a symmetry of $\B$ so we need consider it no further. For class $\C$ (and symmetrically $\D$) a basis is easy to determine since it is the subclass of $\Av(2143, 4321)$ allowing at most two descents, and so its basis consists of the minimal permutations having at most three descents (and not involving the two known basis elements). The regular encoding of $\C$ is also easily obtained by thinking of it as a subclass of the grid class with gridding matrix $(1 \ 1 \ 1)$ and starting from a regular encoding of that class.

With these bases and regular languages computed, the remainder of the program outlined in the first paragraph of this subsection can be carried out. In practice, all of these constructions were accomplished using GAP \cite{GAP4} and its automata package \cite{GAP_Automata}. Of course a signal advantage of this approach is that it is possible to verify all the constructions experimentally (at least up to a certain length) thereby obtaining some not insignificant degree of confidence in their correctness. In view of this procedure, we report only the final outcome of the enumeration.

\begin{proposition}
The class $\Av(2143, 4321)$ is enumerated by the rational function:
 \[
 \frac{t p(t)}{(1-2
t)^4 (1-t)^7 \left(1-3t+t^2\right)}
\]
where
\begin{eqnarray*}
p(t) &=& 1 - 16 t + 117 t^2 - 513 t^3 + 1499 t^4 - 3064 t^5 + 4530 t^6 -   \\
{} & {} &   4827 t^7 + 3691 t^8 - 1968 t^9 + 690 t^{10} - 150 t^{11} + 16 t^{12} .
\end{eqnarray*}

\end{proposition}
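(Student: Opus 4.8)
The plan is to use the decomposition $\Av(2143,4321)=\A\cup\B\cup\C\cup\D$ furnished by Theorem~\ref{maintheorem} and to compute the generating function by inclusion–exclusion. Writing $f_\X$ for the generating function of a class $\X$, we have
\[
f_{\Av(2143,4321)}=\sum_{\emptyset\ne S\subseteq\{\A,\B,\C,\D\}}(-1)^{|S|+1}\,f_{\bigcap S},
\]
a sum of $15$ terms ($4$ single classes, $6$ pairwise, $4$ triple and $1$ quadruple intersection). So it suffices to produce a rational generating function for each of the four classes and for each of the eleven non-trivial intersections, and then to add them.

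For the four individual classes we proceed exactly as sketched in the paragraphs preceding this statement: each is a monotone grid class whose cell graph is a forest, so by \cite{vatter:on-partial-well:,albert:geometric-grid-:} — or, since these classes are small, by the explicit encodings already given for $\B$ and $\C$ together with the symmetries taking them to $\A$ and $\D$ — there is a regular language in bijection with the class, and its generating function is rational via the transfer-matrix method. The only care needed here is to fix a genuinely \emph{canonical} encoding, so that each permutation of the class is represented by exactly one word; this is the role of the requirement that codewords for $\B$ lie in $\{\a, \b, \c\}^* \d^*$ subject to the stated factor constraints, and analogous normalizations apply to the other three.

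For the intersections we use the fact that, under the word encoding of a forest grid class, containment of patterns corresponds to containment of subwords. Hence if $\X$ is one of our four classes with regular language $L_\X$ and $\Y$ is another with finite basis $B_\Y$, then $\X\cap\Y=\X\cap\Av(B_\Y)$ is encoded by the regular language obtained from $L_\X$ by deleting every word that contains a subword encoding some element of $B_\Y$; the set of such "bad" words is regular, so the intersection is regular and its generating function rational. This requires the bases of $\A,\B,\C,\D$: the basis of $\B$ is the one exhibited above, the basis of $\A$ is its image under the relevant symmetry, and the bases of $\C$ and $\D$ follow from Lemma~\ref{two-descents} (namely $2143$, $4321$, and the minimal permutations with three descents). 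Running this construction for all eleven non-trivial intersections, enumerating each resulting automaton, and substituting into the displayed formula yields, after simplification, the claimed rational function $t\,p(t)/\big((1-2t)^4(1-t)^7(1-3t+t^2)\big)$.

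The main obstacle is one of bookkeeping and reliability rather than of ideas: there are many automata to assemble, each encoding must be pinned down canonically so nothing is double-counted, and while the bases of $\C$ and $\D$ are finite they are not small, so the intermediate automata are sizeable. For this reason the computation is carried out by machine (using GAP and its automata package), which has the further advantage that every construction, and the final count itself, can be checked against brute-force enumeration up to a fixed length, giving good confidence in the stated answer.
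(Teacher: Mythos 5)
Your proposal matches the paper's own proof essentially step for step: decompose via Theorem~\ref{maintheorem}, encode each of $\A,\B,\C,\D$ by a regular language, use the subpermutation--subword correspondence together with the finite bases to get regular languages for the intersections, and finish by transfer-matrix enumeration and inclusion--exclusion, carried out in GAP with experimental verification. This is exactly the paper's argument (down to the observation that the basis of $\C$ consists of $2143$, $4321$ and the minimal permutations with three descents), so nothing further is needed.
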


The series expansion begins 1, 2, 6, 22, 86, 333, 1235, 4339, 14443, 45770, 138988, 407134, $\ldots$ (sequence A165525 of~\cite{sloane:the-on-line-enc:}).

\section{Av(2143, 4312)}
\subsection{The structure of Av(2143,4312)}

The main result in this subsection is

\begin{theorem}\label{Av-2143-4312-structure}
\[\Av(2143, 4312)=\E\cup\F\]
where $\E$ and $\F$ are the monotone grid classes shown in Figure \ref{E-and-F}.
\end{theorem}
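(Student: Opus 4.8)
The inclusion $\E\cup\F\subseteq\Av(2143,4312)$ is the easy half. Since a copy of a length-$4$ pattern in a permutation gridded by a $2\times 2$ matrix must distribute over the four cells in only finitely many ways, it is a routine finite verification that neither of the grid classes drawn in Figure~\ref{E-and-F} can contain $2143$ or $4312$. So the substance of the theorem is the reverse inclusion, and the plan is to prove $\Av(2143,4312)\subseteq\E\cup\F$ in the two stages promised in the introduction: first embed the whole class in a single ambient $2\times 2$ monotone grid class $\M$, and then refine $\M$ by imposing the two forbidden patterns until only the shapes $\E$ and $\F$ survive.

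For the first stage, the Huczynska--Vatter theorem already tells us that $\Av(2143,4312)$ lies inside \emph{some} monotone grid class, since the class contains no long sum of $21$'s (two summed $21$'s make $2143$) and no long skew sum of $12$'s (three skew-summed $12$'s already contain $4312$). That result does not bound the size of the grid, so to reach a concrete $2\times 2$ matrix I would argue directly, by induction on $|\pi|$ in the style of the proof of Theorem~\ref{maintheorem}. If $\pi$ ends in its maximum, delete it, apply the inductive hypothesis, and note that $\M$ is visibly closed under re-appending a maximum. Otherwise let $n$ be the maximum, at position $p$, and let $\rho$ be the non-empty suffix to its right; then $n$ prefixed to any $312$ inside $\rho$ would make a $4312$, so $\rho$ avoids $312$, while a symmetric appeal to $2143$-avoidance constrains the elements lying to the left of, and below the top of, $\rho$. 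Assembling these restrictions forces $\pi$ into the $2\times 2$ grid class $\M$ underlying Figure~\ref{E-and-F}.

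For the second stage, I would work inside a fixed gridding of $\pi\in\M$ and impose the two forbidden patterns cell by cell. Because $\M$ has only four cells, every potential occurrence of $2143$ (respectively $4312$) lies across one of a bounded list of configurations of cells, and for each configuration the avoidance requirement reduces to a local condition: some cell must be empty, two neighbouring cells must in fact form a single monotone run, or all points of one cell must lie above (or to one side of) all points of a neighbour. Running through this finite case analysis and keeping only the surviving configurations shows that each $\pi$ can be regridded into one of exactly two refined shapes, and these are precisely the component grid classes $\E$ and $\F$.

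The step I expect to give the most trouble is the first stage's identification of the \emph{particular} $2\times 2$ matrix: Huczynska--Vatter only hands us a grid class of uncontrolled size, so the concrete bound has to come from the hands-on decomposition, and as usual this is entangled with the gridding-ambiguity problem in the second stage --- a permutation of $\M$ typically admits several griddings, so one must make the choices consistently throughout the case analysis to be sure the split is genuinely exhaustive and that $\E$ and $\F$ together really do capture all of $\Av(2143,4312)$. (Understanding the overlap $\E\cap\F$ will be needed for the later enumeration, but not for the statement proved here.)
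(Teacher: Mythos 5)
Your overall architecture---first trap the whole class in a concrete $2\times 2$ monotone grid class, then impose $2143$-avoidance to split it into $\E$ and $\F$---is exactly the paper's, and your second stage matches the paper's argument: in the ambient gridding $4312$ is avoided automatically, the only dangerous occurrence of $2143$ has its $2$ and $4$ in the top-left cell, its $1$ in the bottom-left cell and its $3$ in the top-right cell, and forbidding it forces one of the two regriddings that are precisely $\E$ and $\F$. The easy inclusion $\E\cup\F\subseteq\Av(2143,4312)$ is handled the same way in both.

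The gap is in your first stage, which is the content of the paper's Lemma~\ref{lem-two-by-two}. Splitting at the maximum $n$ only tells you that the suffix $\rho$ to its right avoids $312$ (hence lies in $\Av(312,2143)$); that class strictly contains the $<$-wedge $\Av(132,312)$ and grows faster than any two-cell monotone column, so it does not fill the right-hand column of the target grid, and your ``symmetric appeal to $2143$-avoidance'' for the elements to the left is not symmetric in any usable sense---no symmetry exchanging $2143$ and $4312$ respects the decomposition at $n$---so the restrictions you list do not assemble into the shape of Figure~\ref{GridD}. The missing idea is the choice of pivot: the paper takes $i$ to be the \emph{rightmost} element occurring as the $1$ of a $132$ or the $3$ of a $312$ (if no such element exists, $\pi$ is already a $<$-wedge and we are done). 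By the maximality of $i$, everything to the right of $i$ lies in $\Av(132,312)$, i.e.\ is exactly a $<$-wedge filling the right-hand column; then, taking the witnessing $132$ (or $312$) with its $3$ and $2$ leftmost, the region to the left of $i$ splits into three horizontal bands which $4312$- and $2143$-avoidance force to be increasing, empty and increasing respectively, yielding the left-hand column. Your Huczynska--Vatter remark is fine as motivation but, as you concede, gives no bound on the grid size, so without this (or an equivalent) pivot your induction does not close.
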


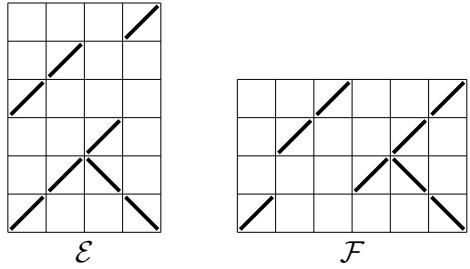
\begin{figure}
\centering
\psset{xunit=0.005in, yunit=0.005in} \psset{linewidth=0.005in}
\begin{pspicture}(0,0)(480,260)
\psline[linestyle=solid,linewidth=0.005in](0,20)(160,20)
\psline[linestyle=solid,linewidth=0.005in](0,60)(160,60)
\psline[linestyle=solid,linewidth=0.005in](0,100)(160,100)
\psline[linestyle=solid,linewidth=0.005in](0,140)(160,140)
\psline[linestyle=solid,linewidth=0.005in](0,180)(160,180)
\psline[linestyle=solid,linewidth=0.005in](0,220)(160,220)
\psline[linestyle=solid,linewidth=0.005in](0,260)(160,260)
\psline[linestyle=solid,linewidth=0.005in](0,20)(0,260)
\psline[linestyle=solid,linewidth=0.005in](40,20)(40,260)
\psline[linestyle=solid,linewidth=0.005in](80,20)(80,260)
\psline[linestyle=solid,linewidth=0.005in](120,20)(120,260)
\psline[linestyle=solid,linewidth=0.005in](160,20)(160,260)

\psline[linestyle=solid,linewidth=0.02in](3,23)(37,57)
\psline[linestyle=solid,linewidth=0.02in](3,143)(37,177)
\psline[linestyle=solid,linewidth=0.02in](43,63)(77,97)
\psline[linestyle=solid,linewidth=0.02in](43,183)(77,217)
\psline[linestyle=solid,linewidth=0.02in](83,97)(117,63)
\psline[linestyle=solid,linewidth=0.02in](83,103)(117,137)
\psline[linestyle=solid,linewidth=0.02in](123,57)(157,23)
\psline[linestyle=solid,linewidth=0.02in](123,223)(157,257)
\rput(80,00){$\E$}
\psline[linestyle=solid,linewidth=0.005in](240,20)(480,20)
\psline[linestyle=solid,linewidth=0.005in](240,60)(480,60)
\psline[linestyle=solid,linewidth=0.005in](240,100)(480,100)
\psline[linestyle=solid,linewidth=0.005in](240,140)(480,140)
\psline[linestyle=solid,linewidth=0.005in](240,180)(480,180)
\psline[linestyle=solid,linewidth=0.005in](240,20)(240,180)
\psline[linestyle=solid,linewidth=0.005in](280,20)(280,180)
\psline[linestyle=solid,linewidth=0.005in](320,20)(320,180)
\psline[linestyle=solid,linewidth=0.005in](360,20)(360,180)
\psline[linestyle=solid,linewidth=0.005in](400,20)(400,180)
\psline[linestyle=solid,linewidth=0.005in](440,20)(440,180)
\psline[linestyle=solid,linewidth=0.005in](480,20)(480,180)
\psline[linestyle=solid,linewidth=0.02in](323,143)(357,177)
\psline[linestyle=solid,linewidth=0.02in](443,143)(477,177)
\psline[linestyle=solid,linewidth=0.02in](283,103)(317,137)
\psline[linestyle=solid,linewidth=0.02in](403,103)(437,137)
\psline[linestyle=solid,linewidth=0.02in](363,63)(397,97)
\psline[linestyle=solid,linewidth=0.02in](403,97)(437,63)
\psline[linestyle=solid,linewidth=0.02in](243,23)(277,57)
\psline[linestyle=solid,linewidth=0.02in](443,57)(477,23)
\rput(360,0){$\F$}
\end{pspicture}
\caption{The components of $\Av(2143, 4312)$. Symmetries under reverse-inverse-reverse.}
\label{E-and-F}
\end{figure}

Our first step is to show that $\Av(2143, 4312)$ is contained within a certain $2\times 2$ grid class:

\begin{lemma}\label{lem-two-by-two} Every permutation of $\Av(2143,4312)$ has a diagram of the type shown in Figure \ref{GridD}.
\end{lemma}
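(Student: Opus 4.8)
The plan is to show that any $\pi \in \Av(2143, 4312)$ decomposes into a $2 \times 2$ array of cells in which each cell is structured in a prescribed way — presumably each cell lying in a wedge class or a monotone class, consistent with the refinement into $\E$ and $\F$ carried out afterwards. Since I do not have Figure \ref{GridD} in front of me, I will describe the natural approach that such a statement demands. The overall strategy mirrors the proof of Theorem \ref{maintheorem}: identify a canonical "splitting point" of $\pi$ determined by extreme elements, and then use the two forbidden patterns $2143$ and $4312$ to constrain what can appear in each of the four resulting quadrants.

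First I would locate the largest element $n$ of $\pi$ and consider the elements to its right. As in the earlier argument, if $n$ is the last element then $\pi - n$ inductively has the desired $2 \times 2$ form and so does $\pi$; hence we may assume something lies to the right of $n$. Let $m$ be the largest element to the right of $n$. This gives a first coarse decomposition into the region left of $n$ and below the height of $n$, the region right of $n$, etc. Now I would bring in the two restrictions: $2143$-avoidance forces the part that is simultaneously down-and-left to be (close to) increasing, exactly as in Figure \ref{barebones}, while $4312$-avoidance controls the interaction between the high-left block and the low-right block. The key point to extract is a single horizontal line and a single vertical line that partition the plot of $\pi$ into four cells, each of which is monotone or a wedge of the appropriate orientation matching Figure \ref{GridD}.

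The main obstacle I expect is not the existence of \emph{some} gridding but pinning down that the \emph{specific} four cell types in Figure \ref{GridD} suffice — in particular ruling out, via $4312$ and $2143$ jointly, the mixed configurations where, say, a descent in the bottom-left cell coexists with a badly placed pair in the top-right cell. This is where a short case analysis on whether each candidate cell contains a $21$ (resp. a $12$ of the wrong type) is needed, each bad case producing an explicit occurrence of $2143$ or $4312$. Concretely: suppose the bottom-left cell is not increasing, witnessed by a descent $yx$ with $y$ before $x$ and $y > x$; then any element above-and-right of this pair together with $y, x$ and a suitably chosen further element would complete a $4312$ or a $2143$, forcing the rest of the diagram to collapse into the claimed shape. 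Iterating this over the (finitely many) cell pairs gives the result.

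I would finish by choosing the horizontal and vertical grid lines canonically — the horizontal line just below the image of the leftmost "high" point and the vertical line just left of the first element that drops below that height — and verifying directly that with these choices each of the four cells is of the form depicted. This canonical choice is also what makes the subsequent realization of $\Av(2143,4312)$ as $\E \cup \F$ clean, since the two sub-cases (whether a certain corner cell is empty, or whether a certain block is a single increasing run) correspond exactly to landing in $\E$ versus $\F$. Given the authors' stated intention to omit "less interesting detailed justifications" in later sections, I would expect the actual proof to carry out just the one or two decisive case distinctions and leave the rest to inspection of the figure.
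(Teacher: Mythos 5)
Your submission is an outline rather than a proof: you never pin down what the four cells of Figure~\ref{GridD} actually are (a left column split into two increasing cells and a right column with an increasing cell above a decreasing one), you never exhibit the two grid lines, and every step that would require work is deferred (``a short case analysis \dots is needed'', ``iterating this \dots gives the result''). The concrete device you do commit to --- splitting at the maximum $n$ and the largest element $m$ to its right --- is imported from the $\Av(2143,4321)$ argument and does not produce the Figure~\ref{GridD} gridding here. For example, for $\pi=51324\in\Av(2143,4312)$ the maximum is the first entry, so your decomposition is degenerate: everything lies to the right of $n$, the suffix $1324$ still contains $132$, and nothing yet forces it into an increasing-over-decreasing right column. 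Your proposed ``canonical'' lines (``just below the leftmost high point'', ``just left of the first element that drops below that height'') are not defined --- high relative to what? --- and the claim that a descent in the bottom-left cell ``forces the rest of the diagram to collapse into the claimed shape'' is an assertion, not a deduction.

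The missing idea is the choice of the vertical splitting line. The paper takes $i$ to be the \emph{rightmost} element of $\pi$ occurring as the `1' of a $132$ or as the `3' of a $312$ (if no such element exists, $\pi$ is already a $<$-wedge and we are done). By maximality of $i$, the points after $i$ avoid both $132$ and $312$, i.e.\ form a $<$-wedge, which is exactly the right-hand column of Figure~\ref{GridD}. Then, fixing a witnessing $132$ (say) with its `3' and `2' leftmost, the region to the left of $i$ splits into three horizontal bands: above the `2' it is increasing (else $4312$), strictly between the `1' and the `2' it is empty (else $2143$), and below the `1' it is increasing (else $2143$); the horizontal grid line is placed in the empty band, and the $312$ case is symmetric. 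Without this explicit choice of $i$ and the three-band analysis, your outline cannot be completed as written.
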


\begin{proof}
Consider a permutation $\pi\in\Av(2143,4312)$ of length $n$. We may suppose that $\pi\not\in\Av(132,312)$ (which is the class of wedge permutations oriented $<$), as otherwise the conclusion follows immediately. Let $i$ be the rightmost element which occurs either as the 1 in a $132$ or the $3$ in a 312 pattern. Thus the points that follow $i$ form a pattern in $\Av(132,312)$. There are two cases, and these are depicted in Figure~\ref{fig-two-by-two}. In both cases, the vertical dotted line passes through the element $i$, with the point as labelled. The diagram on the left depicts the case where we find a copy of 132, noting that we have taken the `3' and `2' to be the leftmost possible.  To the left of the element $i$, we divide the region into 3 boxes: the highest (above the `2' of the 132 pattern) must be increasing to avoid 4312, the middle region (below the `2' but above the '1') must be empty to avoid 2143, and the lowest region (below the `1') must be increasing, also to avoid 2143. It is then clear that any permutation in this picture can be represented as stated in the lemma. A similar argument applies in the case when we encounter a 312 pattern first.
\end{proof}

\begin{figure}
\centering
\begin{tabular}{ccc}
\psset{xunit=0.008in, yunit=0.008in} \psset{linewidth=0.005in}
\begin{pspicture}(0,0)(80,80)
\psline[linestyle=solid,linewidth=0.005in](0,0)(80,0)
\psline[linestyle=solid,linewidth=0.005in](0,40)(80,40)
\psline[linestyle=solid,linewidth=0.005in](0,80)(80,80)
\psline[linestyle=solid,linewidth=0.005in](0,0)(0,80)
\psline[linestyle=solid,linewidth=0.005in](40,0)(40,80)
\psline[linestyle=solid,linewidth=0.005in](80,0)(80,80)
\psline[linestyle=solid,linewidth=0.02in](3,3)(37,37)
\psline[linestyle=solid,linewidth=0.02in](3,43)(37,77)
\psline[linestyle=solid,linewidth=0.02in](43,43)(77,77)
\psline[linestyle=solid,linewidth=0.02in](43,37)(77,3)
\end{pspicture}
\end{tabular}
\caption{A $2\times 2$ grid class containing $\Av(2143, 4312)$.}
\label{GridD}
\end{figure}
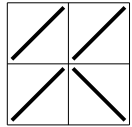

\begin{figure}
\centering
\psset{xunit=0.012in, yunit=0.012in} \psset{linewidth=0.005in}
\begin{pspicture}(0,0)(240,100)
\psline[linestyle=solid,linewidth=0.005in](0,0)(80,0)
\psline[linestyle=solid,linewidth=0.005in](0,100)(80,100)
\psline[linestyle=dashed,linewidth=0.005in](0,40)(40,40)
\psline[linestyle=dashed,linewidth=0.005in](0,60)(80,60)
\psline[linestyle=solid,linewidth=0.005in](0,0)(0,100)
\psline[linestyle=solid,linewidth=0.005in](80,0)(80,100)
\psline[linestyle=dashed,linewidth=0.005in](40,0)(40,100)
\psline[linestyle=solid,linewidth=0.02in](3,3)(37,37)
\psline[linestyle=solid,linewidth=0.02in](5,65)(37,97)
\psline[linestyle=solid,linewidth=0.02in](45,70)(77,97)
\psline[linestyle=solid,linewidth=0.02in](50,60)(77,3)
\pscircle*(40,40){0.04in}
\pscircle*(45,70){0.04in}
\pscircle*(50,60){0.04in}
\rput(46,38){$i$}
\psline[linestyle=solid,linewidth=0.005in](160,0)(240,0)
\psline[linestyle=solid,linewidth=0.005in](160,100)(240,100)
\psline[linestyle=solid,linewidth=0.005in](160,0)(160,100)
\psline[linestyle=solid,linewidth=0.005in](240,0)(240,100)
\psline[linestyle=dashed,linewidth=0.005in](160,40)(240,40)
\psline[linestyle=dashed,linewidth=0.005in](160,80)(200,80)
\psline[linestyle=dashed,linewidth=0.005in](200,0)(200,100)
\psline[linestyle=solid,linewidth=0.02in](163,3)(197,37)
\psline[linestyle=solid,linewidth=0.02in](163,43)(197,77)
\psline[linestyle=solid,linewidth=0.02in](205,30)(237,3)
\psline[linestyle=solid,linewidth=0.02in](210,40)(237,97)
\pscircle*(200,80){0.04in}
\pscircle*(205,30){0.04in}
\pscircle*(210,40){0.04in}
\rput(206,80){$i$}
\end{pspicture}
\caption{The two cases in the proof of Lemma~\ref{lem-two-by-two}.}
\label{fig-two-by-two}
\end{figure}
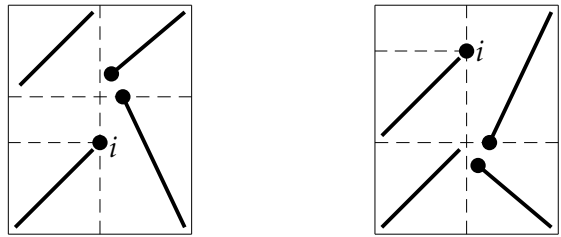

We can now complete the proof of Theorem \ref{Av-2143-4312-structure}.

Certainly both $\E$ and $\F$ are contained in $\Av(2143, 4312)$ since, by inspection, neither contain $2143$ or $4312$.  So, let $\pi$ be an arbitrary permutation of $\Av(2143, 4312)$.
By Lemma~\ref{lem-two-by-two}, $\pi$ has a representation of the type displayed in Figure \ref{GridD}.  Permutations with such a representation automatically avoid 4312 so it remains to enforce the condition that $\pi$ should also avoid 2143. However the only way in which 2143 can appear in such a permutation is for the 2 and the 4 to lie in the top left cell, the 1 in the bottom left cell and the 3 in the top right cell. Thus, avoiding the pattern 2143 corresponds to the following restriction within the $2\times 2$ gridding: any two points in the top left cell cannot simultaneously be separated by points in the top right and bottom left cells.

Hence $\pi$ has one of the two forms shown in Figure \ref{GridDTwoForms}.  By inspection the left-most diagram lies in $\E$ and the right-most diagram lies in $\F$.

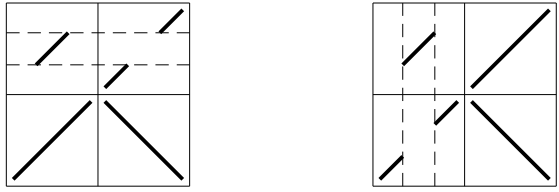
\begin{figure}
\centering
\psset{xunit=0.012in, yunit=0.012in} \psset{linewidth=0.005in}
\begin{pspicture}(0,0)(240,80)
\psline[linestyle=solid,linewidth=0.005in](0,0)(80,0)
\psline[linestyle=solid,linewidth=0.005in](0,40)(80,40)
\psline[linestyle=solid,linewidth=0.005in](0,80)(80,80)
\psline[linestyle=solid,linewidth=0.005in](0,0)(0,80)
\psline[linestyle=solid,linewidth=0.005in](40,0)(40,80)
\psline[linestyle=solid,linewidth=0.005in](80,0)(80,80)
\psline[linestyle=dashed,linewidth=0.005in](0,53)(80,53)
\psline[linestyle=dashed,linewidth=0.005in](0,67)(80,67)

\psline[linestyle=solid,linewidth=0.02in](3,3)(37,37)
\psline[linestyle=solid,linewidth=0.02in](13,53)(27,67)
\psline[linestyle=solid,linewidth=0.02in](43,37)(77,3)
\psline[linestyle=solid,linewidth=0.02in](43,43)(53,53)
\psline[linestyle=solid,linewidth=0.02in](67,67)(77,77)

\psline[linestyle=solid,linewidth=0.005in](160,0)(240,0)
\psline[linestyle=solid,linewidth=0.005in](160,40)(240,40)
\psline[linestyle=solid,linewidth=0.005in](160,80)(240,80)
\psline[linestyle=solid,linewidth=0.005in](160,0)(160,80)
\psline[linestyle=solid,linewidth=0.005in](200,0)(200,80)
\psline[linestyle=solid,linewidth=0.005in](240,0)(240,80)
\psline[linestyle=dashed,linewidth=0.005in](173,0)(173,80)
\psline[linestyle=dashed,linewidth=0.005in](187,0)(187,80)

\psline[linestyle=solid,linewidth=0.02in](163,3)(173,13)
\psline[linestyle=solid,linewidth=0.02in](187,27)(197,37)
\psline[linestyle=solid,linewidth=0.02in](203,37)(237,3)
\psline[linestyle=solid,linewidth=0.02in](203,43)(237,77)
\psline[linestyle=solid,linewidth=0.02in](173,53)(187,67)

\end{pspicture}
\caption{Imposing the 2143-avoidance on Figure \ref{GridD}.}
\label{GridDTwoForms}
\end{figure}

\subsection{Enumerating the simple permutations}

Now we use Theorem \ref{Av-2143-4312-structure}  to enumerate the simple permutations. We first enumerate the simple permutations of the class $\E$. These, together with the simples of $\F$ (whose enumeration is identical), form the simples of the class.

\begin{lemma} The generating function for the simple permutations of length at least 4 in the class $\E$ is
\[s_\E(x) = \frac{x^4(2-3x-x^2)}{(1+x)(1-2x)^2}.\]
\end{lemma}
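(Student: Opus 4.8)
The plan is to set up a combinatorial encoding of the permutations in $\E$ as words in a regular language, then isolate the sublanguage corresponding to simple permutations of length at least $4$, and finally extract its generating function and verify the stated closed form. Since $\E$ is a monotone grid class whose cell graph is a forest (it is the left-hand picture in Figure \ref{E-and-F}, a $4\times 3$ arrangement of monotone cells forming a path-like/tree-like cell graph), the general machinery of \cite{vatter:on-partial-well:,albert:geometric-grid-:} guarantees that there is a regular language of codewords in bijection with $\E$, and moreover a regular sublanguage in bijection with its simple permutations. So the first step is to write down this encoding explicitly: assign a letter to each nonempty cell, fix the reading order within each row and column (left-to-right in rows, bottom-to-top in columns, following the orientations of the monotone cells), and describe the lexicographically-least gridding so that each permutation has a unique codeword.

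Next I would impose the simplicity constraints on the codewords, exactly along the lines sketched in Section \ref{SEC_Monotone_grid_classes}. There are three families of obstructions to rule out: (i) an extreme point at one end of the permutation, handled by forbidding certain initial/final letters; (ii) a repeated letter, which produces an interval of size $2$ inside a single cell, handled by forbidding immediate repetitions; and (iii) an interval spanning several cells, which — because the cell graph is a forest — manifests as a finite set of forbidden factors/patterns in the codeword. Working through the geometry of $\E$ one enumerates precisely which letter-combinations create a nontrivial interval and records the resulting deterministic finite automaton. This is where the real work lies: getting the list of forbidden configurations complete and correct, and in particular making sure the length-$\geq 4$ restriction (we only want genuinely simple permutations of size at least $4$, discarding $1$, $12$, $21$ and any size-$3$ noise) is cleanly built into the automaton rather than patched on afterwards.

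With the automaton in hand, the generating function $s_\E(x)$ is obtained by the transfer-matrix method: write $s_\E(x) = \mathbf{u}^{T}(I - xM)^{-1}\mathbf{v}$ for the appropriate transition matrix $M$ and boundary vectors, and simplify. One expects the denominator to factor as $(1+x)(1-2x)^2$ — the $(1-2x)^2$ reflecting the roughly "binary" freedom of stacking points in the two long directions of the grid, and the $(1+x)$ coming from a small alternation constraint in the encoding — and the numerator to come out as $x^4(2-3x-x^2)$, the $x^4$ recording that the shortest simple permutations in $\E$ have length $4$. As a sanity check I would expand the claimed series a few terms and compare against a direct computer enumeration of simple permutations in $\E$ (this is exactly the kind of experimental verification the authors describe doing with GAP \cite{GAP4,GAP_Automata}).

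The main obstacle, then, is item (iii): correctly identifying all multi-cell intervals in $\E$ and encoding their avoidance as a finite automaton. Everything downstream — the transfer matrix, the rational function, the final simplification — is routine once that automaton is pinned down. It is worth noting that the identical structure of $\F$ (it is a reverse-inverse-reverse symmetry of $\E$, per Figure \ref{E-and-F}) means the same automaton, suitably relabelled, governs its simples, so no separate argument is needed there; the two contribute the same generating function to the simples of $\Av(2143,4312)$, with only the finitely many small simples common to both (and the non-simple overlap) to be reconciled by inclusion–exclusion later.
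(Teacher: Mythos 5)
Your plan coincides exactly with the paper's method: encode $\E$ as a forest monotone grid class over a fixed alphabet with a chosen reading order, cut down to a regular sublanguage in bijection with the simple permutations, and apply the transfer-matrix method. But the proposal never executes the step that actually constitutes the proof. You correctly identify that "the real work lies" in determining which letter-combinations create nontrivial intervals and which codewords are non-canonical, and then you stop there: no alphabet is fixed (the grid for $\E$ has eight nonempty cells, not the "$4\times 3$ arrangement" you describe), no reading order is pinned down, no forbidden factors are listed, and no automaton is written. The paper's proof consists precisely of this content: an eight-letter encoding $\{a,b,c,d,A,B,C,D\}$ with a specified reading order, followed by an explicit list of constraints --- all lowercase letters precede all uppercase ones; the word cannot begin with $b,c,d,B,C$ and must introduce $a$ before $b$ before $c$ before $d$; between two occurrences of the same letter there must be an occurrence of a specified neighbouring letter (to kill intervals of size $2$ within a cell); the factors $ca$, $da$, $db$, $CA$, $DA$, $DB$ are forbidden (to force canonical codewords); and the final $A$ (or final $a$) can be followed by at most one letter (to kill intervals spanning the $B,C,D$ cells). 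These rules are then assembled into a concrete two-part automaton from which $s_\E(x)$ is read off. Without that list your argument reduces to "there exists an automaton, and its generating function is presumably the stated one, verifiable by computer," which establishes the lemma only modulo the computation you have deferred.

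To be clear, nothing in your outline \emph{would fail}; the gap is one of omission rather than error, and your structural remarks (the role of the $x^4$ factor, the symmetry between $\E$ and $\F$) are consistent with the paper. But a referee would not accept the proposal as a proof of the stated rational function: the heuristic reading of the denominator $(1+x)(1-2x)^2$ is not a derivation, and the proposed GAP check is a sanity test, not a substitute for exhibiting the automaton. To close the gap you need to (i) fix the encoding and reading order for all eight cells, (ii) justify each local constraint geometrically (which pairs of cells can jointly form an interval, which codewords are duplicates of lexicographically earlier ones), and (iii) present the resulting automaton and carry out the transfer-matrix computation.
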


\begin{proof} Since $\E$ is a forest monotone grid class, we can encode the points of each permutation according to the region in which they lie. We will use the following scheme for our encoding:
\begin{center}
\psset{xunit=0.012in, yunit=0.012in} \psset{linewidth=0.005in}
\begin{pspicture}(0,0)(80,120)
\drawlines(4,6)
\psline[linestyle=solid,linewidth=0.02in]{->}(3,3)(17,17)
\psline[linestyle=solid,linewidth=0.02in]{->}(23,23)(37,37)
\psline[linestyle=solid,linewidth=0.02in]{->}(3,63)(17,77)
\psline[linestyle=solid,linewidth=0.02in]{->}(23,83)(37,97)
\psline[linestyle=solid,linewidth=0.02in]{<-}(43,43)(57,57)
\psline[linestyle=solid,linewidth=0.02in]{<-}(63,103)(77,117)
\psline[linestyle=solid,linewidth=0.02in]{<-}(43,37)(57,23)
\psline[linestyle=solid,linewidth=0.02in]{<-}(63,17)(77,3)
\rput(6,14){\footnotesize $b$}
\rput(25,35){\footnotesize $B$}
\rput(6,74){\footnotesize $a$}
\rput(25,95){\footnotesize $A$}
\rput(55,45){\footnotesize $D$}
\rput(74,106){\footnotesize $d$}
\rput(45,26){\footnotesize $C$}
\rput(66,6){\footnotesize $c$}
\end{pspicture}
\end{center}
We impose rules on the permitted code words to ensure both that the permutation being encoded is simple and that each simple permutation has a unique encoding. To do this, we specify the following rules:

\begin{itemize}
\item All occurrences of $\{a,b,c,d\}$ come before $\{A,B,C,D\}$.
\item To ensure the permutation is not sum- or skew-decomposable, the word cannot begin with any of $\{b,c,d,B,C\}$. For the same reason, we must find at least one $c$ before the first $d$, at least one $b$ before the first $c$, and an $a$ before the first $b$. Note that this means the first lowercase letter (if it exists) is $a$.
\item All letters before the first $D$ are in $\{a,b,c,d\}$, to avoid duplicate encodings (e.g.\ $A$ could be replaced with $a$). Note that this means that the first uppercase letter (if it exists) is $D$.
\item To avoid intervals within a region, between every two instances of $a$ (except where the word is $a^2$) there must be at least one $b$. Similarly, between two instances of $b$ there must be either $a$ or $c$, between two $c$s there must be $b$ or $d$, and between two $d$s there must be $c$. The same rules apply for the uppercase letters.
\item There should be no factors of the form $ca$, $da$, $db$, $CA$, $DA$ or $DB$, as these can be encoded as $ac$, $ad$, $bd$ and so on.
\item If it exists, the final $A$ can be followed by at most one letter, since the subsequent points in the regions encoded by $B$, $C$ and $D$ will form an interval. For uniqueness, this final letter will be $B$.
\item If the word does not contain $A$ then the final $a$ can be followed by at most one letter (which for uniqueness will be encoded by $b$). Otherwise, either we can find an interval of size two in the regions encoded by $\{b,c,B,C,D\}$, or there is an instance of $d$ that could equivalently be encoded by $a$.
\end{itemize}

These rules together give the following automaton, which we present in two parts, the first encoding the prefix on the letters $\{a,b,c,d\}$, the second encoding the suffix on $\{A,B,C,D\}$. The outward half-edges in the first automaton indicate a permitted transition to the start state of the second automaton.

{\small
\begin{center}\scalebox{0.8}{
\begin{picture}(90,90)(0,-95)
\node[Nmarks=if](s)(28,-8){$s$}
\node[Nmarks=fr](a)(28,-28){$a$}
\node[Nmarks=r](aa)(8,-28){$a^2$}
\node[Nmarks=fr](ab)(28,-48){$ab$}
\node[Nmarks=f](ba)(28,-72){$ba$}
\node[Nmarks=f](c)(44,-60){$c$}
\node[Nmarks=f](b)(68,-52){$b$}
\node[Nmarks=fr](dota)(80,-72){$\cdot a$}
\node[Nmarks=fr](dotab)(68,-92){$\cdot ab$}
\node[Nmarks=f](d)(44,-84){$d$}
\drawedge(ba,c){$c$}
\drawedge(ab,c){$c$}
\drawedge(a,ab){$b$}
\drawedge(a,aa){$a$}
\drawedge(s,a){$a$}
\drawedge(dotab,d){$d$}
\drawedge(dota,d){$d$}
\drawedge(dotab,c){$c$}
\drawedge(b,dota){$a$}
\drawedge(b,d){$d$}
\drawedge(dota,c){$c$}
\drawqbedge(ab,35.72,-60.06,ba){$a$}
\drawqbedge(ba,19.58,-59.53,ab){$b$}
\drawqbedge(c,52.65,-48.95,b){$b$}
\drawqbedge(b,56.89,-59.27,c){$c$}
\drawqbedge(d,37.57,-71.97,c){$c$}
\drawqbedge(c,49.21,-71.7,d){$d$}
\drawqbedge(dota,78.85,-86.25,dotab){$b$}
\drawqbedge(dotab,69.32,-79.64,dota){$a$}
\end{picture}
\begin{picture}(90,45)(0,-93)
\node[Nmarks=i](S)(28,-84){$S$}
\node(C)(44,-60){$C$}
\node(B)(68,-52){$B$}
\node[Nmarks=r](A)(80,-72){$A$}
\node[Nmarks=r](AB)(68,-92){$AB$}
\node(D)(44,-84){$D$}
\drawedge(S,D){$D$}
\drawedge(AB,D){$D$}
\drawedge(A,D){$D$}
\drawedge(AB,C){$C$}
\drawedge(B,A){$A$}
\drawedge(B,D){$D$}
\drawedge(A,C){$C$}
\drawqbedge(C,52.65,-48.95,B){$B$}
\drawqbedge(B,56.89,-59.27,C){$C$}
\drawqbedge(D,37.57,-71.97,C){$C$}
\drawqbedge(C,49.21,-71.7,D){$D$}
\drawqbedge(A,78.85,-86.25,AB){$B$}
\drawqbedge(AB,69.32,-79.64,A){$A$}
\end{picture}}
\end{center}}

The generating function now follows routinely from the transfer matrix method.
\end{proof}

Of course, by symmetry, the simple permutations of $\F$ are enumerated by the generating function $s_\F(x)$ which is equal to $s_\E(x)$.
However the class $\E\cap \F$ also contains simple permutations, which need to be considered in order to handle the overcount in $s_\E(x) + s_\F(x)$.
We describe these in the following result.

\begin{lemma}\label{lem-e-cap-f-struct}The simple permutations in $\E\cap\F$ lie in one of the following two grid classes (where a cell with a single dot represents exactly one point):
\begin{center}
\begin{tabular}{ccc}
\psset{xunit=0.008in, yunit=0.008in} \psset{linewidth=0.005in}
\begin{pspicture}(0,0)(60,60)
\drawlines(3,3)
\upline(1,1)
\upline(3,2)
\downline(3,1)
\cellclass(2,3){$\bullet$}
\end{pspicture}
&\rule{10pt}{0pt}&
\psset{xunit=0.008in, yunit=0.008in} \psset{linewidth=0.005in}
\begin{pspicture}(0,0)(60,60)
\drawlines(3,3)
\upline(2,1)
\upline(3,3)
\downline(3,1)
\cellclass(1,2){$\bullet$}
\end{pspicture}
\end{tabular}
\end{center}
\end{lemma}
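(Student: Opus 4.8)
The plan is to push the argument proving Theorem~\ref{Av-2143-4312-structure} one step further, using simplicity to collapse almost all of the structure of a permutation in $\E\cap\F$. Recall from Figures~\ref{GridD} and~\ref{GridDTwoForms} that every permutation of $\Av(2143,4312)$ admits a $2\times 2$ gridding with top-left, top-right and bottom-left cells increasing and bottom-right cell decreasing; that membership in $\E$ is realised by such a gridding in which no value occurring in the top-right cell lies strictly between the least and greatest values occurring in the top-left cell (the top-left cell is \emph{value-sandwiched} by the two halves of the top-right cell); and, symmetrically, that membership in $\F$ is realised by such a gridding in which no position of the bottom-left cell lies strictly between the leftmost and rightmost positions of the top-left cell (it is \emph{position-sandwiched} by the bottom-left cell). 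Fix a simple $\pi\in\E\cap\F$ and a $2\times 2$ gridding of $\pi$ of this shape; let $U$ be the set of points of its top-left cell.

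The first step is to show $|U|\le 1$. Suppose $u_1<u_2$ are points of $U$ consecutive in $U$. Since the two bottom cells lie below the whole top row, since no value of the top-right cell lies strictly between two values of $U$, and since no point of $U$ lies between $u_1$ and $u_2$, \emph{no} point of $\pi$ has value strictly between $u_1$ and $u_2$; so simplicity forces some point $w$ strictly between $u_1$ and $u_2$ in position, and, being left of the vertical grid line, $w$ lies in the bottom-left cell. Passing to a $2\times 2$ gridding of $\pi$ that witnesses $\pi\in\F$ — and using that, because $\pi$ is simple and $|\pi|\ge 4$, the original gridding may be taken to be such a witness up to relocating at most one extreme point — the point $w$ supplies a bottom-left point positioned strictly between two top-left points, contradicting the position-sandwiching that defines $\F$. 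Hence $|U|\le 1$.

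When $U=\{p\}$, the point $p$ becomes the single-point cell, and the remaining points of $\pi$ occupy four monotone blocks: the bottom-left cell (increasing), the bottom-right cell (decreasing), and the two increasing halves of the top-right cell (one below $p$ in value, one above). The $\F$-condition together with simplicity, applied as in the previous step, forces one of the two top-right halves to vanish and the bottom-left cell to lie wholly on one side of $p$; reading off the two resulting pictures gives exactly the first displayed grid class and, when the other top-right half survives, its reverse-inverse-reverse image, which is the second. When $U=\emptyset$, $\pi$ lies in the $2\times 2$ monotone grid class obtained from Figure~\ref{GridD} by deleting the top-left cell, whose non-empty cells form a path and which contains no simple permutation of length $\ge 4$ (an easy check), so this case contributes nothing. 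In all cases $\pi$ lies in one of the two displayed grid classes.

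The step I expect to be the main obstacle is the reconciliation of the $\E$- and $\F$-griddings: one must check that a simple $\pi\in\E\cap\F$ with $|\pi|\ge 4$ has, up to moving a single extreme point, a single $2\times 2$ gridding exhibiting both the $\E$ value-sandwiching and the $\F$ position-sandwiching, so that the bottom-left point $w$ above — and, in the case $U=\{p\}$, the corresponding forced points — genuinely contradict the complementary structure. This is a short but fiddly case analysis; throughout, the working tool is the familiar fact that in a forest grid class two points of a common cell, and certain L-shaped triples of points, form an interval unless split by a point of the appropriate type, so that simplicity repeatedly empties cells out.
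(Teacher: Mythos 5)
Your strategy differs from the paper's: you try to bound the top-left cell of a single $2\times 2$ gridding directly, whereas the paper first proves that a simple $\sigma\in\E\cap\F$ avoids $3412$, then analyses an extremal occurrence of $2413$ (or $3142$), which simplicity guarantees. Unfortunately your version has a genuine gap exactly at the point you flag. The interval argument for two consecutive points $u_1,u_2$ of the top-left cell needs \emph{one and the same} gridding to satisfy both the $\E$-type condition (top-right cell does not value-separate the top-left cell) and the $\F$-type condition (bottom-left cell does not position-separate it). But $u_1,u_2$ are defined relative to your chosen gridding, and nothing forces them both to lie in the top-left cell of the gridding witnessing $\pi\in\E$, let alone of the one witnessing $\pi\in\F$; the assertion that the griddings can be reconciled ``up to relocating at most one extreme point'' is precisely the unproved crux, not a routine clean-up. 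The paper's proof is engineered to avoid this issue: it applies the interval trick to the pair $cd$ coming from an occurrence of $cdab\sim 3412$, and the presence of the ascent $ab$ to the lower right forces $c$ and $d$ into the top-left cell of \emph{every} valid $2\times 2$ gridding, so the $\E$- and $\F$-conditions can be read off from their respective witnessing griddings and still apply to the same pair. Without some substitute for that forcing argument, your first step does not go through.

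There is a second gap in the endgame. Once $|U|=1$, the sandwiching conditions you use as proxies for membership in $\E$ and $\F$ become vacuous (they constrain only pairs of points in the top-left cell), so they cannot ``force one of the two top-right halves to vanish and the bottom-left cell to lie wholly on one side of $p$.'' The constraints that actually produce the two displayed grid classes come from the finer cell structure of $\E$ and $\F$ --- how the right-hand and bottom cells must sit relative to one another and to $p$ --- which your reduction to the $2\times 2$ picture has discarded. The paper extracts this structure by choosing the $2413$ occurrence extremally, partitioning the plane into the cells $A,\dots,G$ of Figure~\ref{fig-sigma-2413}, and emptying or orienting each cell using $2143$-, $4312$- and $3412$-avoidance together with simplicity (for instance $C$ has at most one point, and $A$ must be empty because a simple permutation cannot end with its maximum or begin/end with its minimum). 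Some argument of that kind is needed to finish; as written, the case $U=\{p\}$ is asserted rather than proved.
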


\begin{proof}
Let $\sigma$ be a simple permutation in $\E\cap\F$ of length at least 4.    Suppose  that $\sigma$ had a subsequence $cdab\sim 3412$ and consider a representation of $\sigma$ in the form of the $2\times 2$ class  given in Lemma \ref{lem-two-by-two} (Figure \ref{GridD}).  Then $c$ and $d$ would necessarily lie in the top left cell of the representation.
However, since $\sigma\in\E$, there are no points in the top right cell that separate $c$ and $d$ by value. Similarly, since $\sigma\in\F$, there are no points in the lower left cell that separate $c$ and $d$ by position, and hence  $c$ and $d$ form an interval in $\sigma$, a contradiction. So, $\sigma$ avoids 3412.

%
%
%
Next note that, since $\sigma$ is simple, it must contain either $2413$ or $3142$, or both (but in fact it will emerge that here this final case is impossible). Suppose first that $\sigma$ contains $2413$, and choose a copy of $2413$ in $\sigma$ with  $2$ as far right as possible, the $3$ as far left as possible and the $1$ as high as possible.
Now we have the situation depicted in Figure~\ref{fig-sigma-2413}. In this diagram, all non-labelled cells are empty, either because $\sigma$ cannot contain any of the patterns $2143$, $4312$ or $3412$, or because of our choice of points for the $2413$-pattern.
\begin{figure}
\centering
\psset{xunit=0.012in, yunit=0.012in} \psset{linewidth=0.01in}
\begin{pspicture}(0,0)(100,100)
\drawlines(5,5)
\cellclass(5,5){$A$}
\cellclass(5,4){$B$}
\cellclass(3,3){$C$}
\cellclass(1,2){$D$}
\cellclass(1,1){$E$}
\cellclass(4,1){$F$}
\cellclass(5,1){$G$}
\pscircle*(20,40){0.03in}
\pscircle*(40,80){0.03in}
\pscircle*(60,20){0.03in}
\pscircle*(80,60){0.03in}
\end{pspicture}
\caption{The case where the simple permutation $\sigma\in \E\cap\F$ contains 2413.}
\label{fig-sigma-2413}
\end{figure}
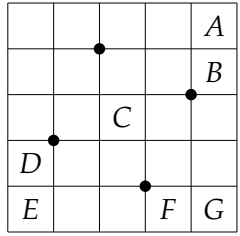

In order to avoid the pattern 2143, the cells labelled $A$ and $B$ together must avoid $21$, i.e.\ they form an increasing sequence. Similarly, cells $D$ and $E$ must avoid $21$ for the same reason, while $F$ and $G$ avoid $12$ in order to ensure that $\sigma$ does not contain $4312$. The cell labelled $C$ can contain at most one point, as otherwise it will form a nontrivial interval. We now claim that $A$ must in fact be empty. For if it were not, then because $\sigma$ cannot end with its maximum there would have to be an element of $G$ to its right. But then $\sigma$ would either begin or end with its minimum, which in either case is a contradiction. Together, these conditions on the cells demonstrate that $\sigma$ lies in the first of the two grid classes in the statement of the Lemma. The situation where $\sigma$ contains $3142$ is analogous, and gives rise to permutations in the second of these classes.
\end{proof}

With the structure of the simples of $\E\cap\F$ described, we are now able to enumerate them. The following encoding and automaton is used to enumerate the simples containing 2413 (i.e. the ones given by the left-hand diagram in Lemma \ref{lem-e-cap-f-struct}):
\begin{center}
\begin{tabular}{ccc}
\psset{xunit=0.015in, yunit=0.015in} \psset{linewidth=0.005in}
\begin{pspicture}(0,-15)(60,60)
\drawlines(3,3)
\upline(1,1)
\upline(3,2)
\downline(3,1)
\psline[linestyle=solid,linewidth=0.02in]{->}(3,3)(17,17)
\psline[linestyle=solid,linewidth=0.02in]{<-}(43,23)(57,37)
\psline[linestyle=solid,linewidth=0.02in]{<-}(43,17)(57,3)
\rput(6,14){\footnotesize $a$}
\rput(46,6){\footnotesize $b$}
\rput(54,26){\footnotesize $c$}
\cellclass(2,3){$\bullet$}
\end{pspicture}

&\rule{10pt}{0pt}&
\begin{picture}(60,40)(0,-40)
\node[Nmarks=i](s)(16.0,-12.0){$s$}
\node(c)(36.0,-12.0){$c$}
\node(b)(56.0,-12.0){$b$}
\node[Nmarks=r](a)(56.0,-32.0){$a$}
\node[Nmarks=r](ab)(36.0,-32.0){$ab$}
\drawedge(s,c){$c$}
\drawedge(b,a){$a$}
\drawedge(a,c){$c$}
\drawedge(ab,c){$c$}
\drawqbedge(c,46.3,-6.09,b){$b$}
\drawqbedge(b,45.77,-17.73,c){$c$}
\drawqbedge(a,45.77,-38.1,ab){$b$}
\drawqbedge(ab,46.04,-26.19,a){$a$}
\end{picture}
\end{tabular}
\end{center}

The generating function from this automaton is $\displaystyle \frac{x^3}{1-x-x^2}$, which is the generating function for the Fibonacci numbers, offset by three. Consequently, noting that the sets of simples in the two diagrams of Lemma~\ref{lem-e-cap-f-struct} are disjoint, the generating function for the simples in $\E\cap\F$ is:
\[ s_{\E\cap\F}(x) = \frac{2x^4}{1-x-x^2}\]
Note the extra factor of $x$ which has been added, to account for the singleton cell.

Finally, the generating function for the simples of $\C = \Av(2143,4312)$ of length at least 4 is:

\[ s_\C(x) = s_\E(x)+s_\F(x) - s_{\E\cap\F}(x) = \frac{2x^4(1-2x+x^4)}{(1+x)(1-2x)^2(1-x-x^2)}.\]


\subsection{Inflations of simples}

Our next task is to describe the inflations of simple permutations in the grid class $\E$. We identify two types of simple: the simple permutations whose encoding ends with the letter $A$ will be called \emph{type 1}, and all other simples (i.e.\ those ending with $a$, $b$ or $B$) are \emph{type 2}.

\begin{lemma}
In the grid class $\E$, Type 1 simple permutations are enumerated by \[s^1_\E(x) = \frac{x^4(1-x-x^2)}{(1+x)(1-2x)(1-x-2x^2)},\]
and each point can only be inflated by $\Av(12)$ or $\Av(21)$.
\end{lemma}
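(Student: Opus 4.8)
The plan has two essentially independent parts: computing the generating function $s^1_\E(x)$, and proving the restriction on inflations.

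For the generating function I would reuse the automaton built for the simple permutations of $\E$ in the previous lemma. A simple permutation of $\E$ is of Type 1 exactly when its codeword ends in the letter $A$; in the suffix automaton on $\{A,B,C,D\}$ this means the run must terminate in the state labelled $A$. So I would keep the same two-part automaton, declare the single accepting state to be that state (which in particular forces the word to reach the suffix automaton at all, i.e.\ to contain an uppercase letter), and read off the generating function by the transfer-matrix method. One then checks that the rational function produced simplifies to the stated $s^1_\E(x)$ — note $1-x-2x^2=(1-2x)(1+x)$, so equivalently $s^1_\E(x)=x^4(1-x-x^2)/\big((1+x)^2(1-2x)^2\big)$ — and, following the paper's practice elsewhere, cross-checks the first several coefficients against a direct computer enumeration of the simples of $\E$.

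For the inflation claim, fix a Type 1 simple $\sigma\in\E$ and a point $p$ of it, and suppose $p$ is inflated by an interval $\alpha$ that is not monotone; we must produce a copy of $2143$ or $4312$ in the inflation $\sigma'$. Since $\alpha$ is neither increasing nor decreasing it contains both a $12$ and a $21$, so the block of $\sigma'$ replacing $p$ contains two points in relative order $21$, say $p_1>p_2$, and two points in relative order $12$. The underlying elementary fact is a collision principle relative to the four open quadrants of $p$ in the plot of $\sigma$: (i) if the north-east or the south-west quadrant of $p$ contains a $21$, then $p_1p_2$ together with that $21$ realise $21\oplus 21=2143$ in $\sigma'$; (ii) if the south-east quadrant of $p$ contains a $12$, then $p_1p_2$ together with it realise $21\ominus 12=4312$; (iii) if the north-west quadrant of $p$ contains a $21$, then that $21$ together with the $12$ inside the block realise $4312$. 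One also checks that these are essentially the only ways a single non-monotone inflation can manufacture a forbidden pattern, so nothing is lost in reducing to the following assertion: \emph{for every point $p$ of a Type 1 simple of $\E$, at least one of the four quadrant conditions in (i)--(iii) holds.} The easy positive direction — that $p$ is always inflatable by the monotone class agreeing with the orientation of its cell — is immediate from $\E\subseteq\Av(2143,4312)$, and together with this assertion it gives the ``can only''.

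To prove the quadrant assertion I would do a case analysis on which cell of the gridding of $\E$ (Figure~\ref{E-and-F}) contains $p$. Simplicity of $\sigma$ prevents $p$ from being grouped into an interval with its neighbours, and since $|\sigma|\ge 4$ there are genuine points of $\sigma$ in the cells adjacent to $p$'s cell along its row and column — and, using the $\Av(2143,4312)$-structure, in certain further cells as well; combined with the fact that every cell is monotone, this forces a $12$ or a $21$ into one of the four quadrants of $p$ in all but one case. The exceptional case is the cell that for a \emph{general} simple of $\E$ would admit an inflation by an entire wedge class, namely the cell occurring last in the reading order, cell $A$; here the Type 1 hypothesis is exactly what is needed, since ``codeword ends in $A$'' means there is a true point of $\sigma$ in cell $A$ following every other point of $\sigma$, and the no-interval and uniqueness rules governing the encoding then pin the regions beyond it down tightly enough to supply the missing quadrant pattern. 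I expect the transfer-matrix computation to be routine and the cell-by-cell case analysis to be the real work; in particular, checking precisely how ``ends in $A$'' forces the quadrant condition at \emph{every} point of $\sigma$, not merely at the point in cell $A$, is the delicate step and a natural candidate for one of the ``less interesting detailed justifications'' the paper reserves the right to omit. A tidier alternative worth pursuing is to invoke $\Av(2143,4312)=\E\cup\F$ directly: any inflation of $\sigma$ remaining in the class already lies in the monotone grid class $\E$ or $\F$, and one then argues that the block replacing a point of a length-$\ge 4$ simple cannot straddle cells of such a gridding and so must be monotone; but making ``cannot straddle'' rigorous is itself a short lemma, so I would keep the direct quadrant argument as the backbone.
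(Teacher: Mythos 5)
Your proposal is correct, and for the generating function it coincides exactly with the paper's proof: both re-use the automaton from the enumeration of all simples of $\E$ and simply declare the state labelled $A$ to be the unique accept state before applying the transfer matrix method (your observation that $1-x-2x^2=(1+x)(1-2x)$, so the denominator is really $(1+x)^2(1-2x)^2$, is a harmless and correct simplification). For the inflation claim the two arguments diverge in emphasis. The paper disposes of it in one sentence: it is ``routinely verified'' that non-monotone inflations leave the \emph{grid class} $\E$, the only hints given being that the regions $A$, $B$, $C$, $D$ are all non-empty and that the last point encoded by $A$ lies to the right of all points encoded by $B$. You instead argue directly that a non-monotone block forces an occurrence of $2143$ or $4312$, via the quadrant collision principle (a $21$ in the block plus a $21$ in the NE or SW quadrant gives $2143$; a $21$ in the block plus a $12$ in the SE quadrant, or a $12$ in the block plus a $21$ in the NW quadrant, gives $4312$), reducing the lemma to the assertion that at every point of a Type 1 simple at least one such quadrant condition holds. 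This is arguably the more pertinent statement, since the enumeration that follows needs the allowable inflations within $\Av(2143,4312)=\E\cup\F$, not merely within $\E$, and your version rules out the (admittedly implausible) escape of a non-monotone inflation landing in $\F$. The cost is the cell-by-cell verification of the quadrant assertion, which you sketch but do not complete --- though this leaves you at essentially the same level of detail as the paper itself, and your identification of the cell $A$ point (where ``ends in $A$'' supplies the needed $12$ in the SE quadrant from the non-empty cells $C$ and $D$) as the one place the Type 1 hypothesis is genuinely used matches the role of the paper's parenthetical hints.
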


\begin{proof}
The generating function $s^1_\E(x)$ follows by considering the automaton for the simple permutations in $\E$, but with the state labelled $A$ as the only accept state.

Second, it is routinely verified that all points can only be inflated by an increasing or decreasing sequence, in order to remain in the grid class $\E$, noting that the regions encoded by $A$, $B$, $C$ and $D$ are all non-empty, and the last point encoded by $A$ lies to the right of all points encoded by $B$. 
\end{proof}

\begin{lemma}
In the grid class $\E$, Type 2 simple permutations are enumerated by \[s^2_\E(x) = \frac{x^4(1-3x^2-x^3)}{(1+x)(1-2x)(1-x-2x^2)},\]
and each point can only be inflated by $\Av(12)$ or $\Av(21)$, except for one point (either encoded by $b$ or $B$ if this is the last letter, or encoded by $c$ if $a$ is the final letter), which can be inflated by $\Av(312,2143)$.
\end{lemma}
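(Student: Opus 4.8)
The plan is to follow the two preceding lemmas closely. For the generating function, observe that every simple permutation of $\E$ of length at least $4$ has a code word in the language of the automaton built in the proof that $s_\E(x)=\frac{x^4(2-3x-x^2)}{(1+x)(1-2x)^2}$, and that its final letter is exactly one of $A$, $a$, $b$, $B$ --- the first giving a Type 1 simple and each of the other three a Type 2 simple, the two types being disjoint. Hence $s^2_\E(x)=s_\E(x)-s^1_\E(x)$; substituting the two known expressions and clearing denominators (a factor $1-2x$ cancels) gives the stated closed form. Equivalently, one obtains $s^2_\E$ directly from the transfer matrix of that automaton after designating as accepting precisely the states reached by code words ending in $a$, $b$ or $B$.

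For the inflation claim, let $\sigma$ be a Type 2 simple of length at least $4$ with code word $w$, and suppose $\rho=\sigma[\alpha_1,\dots,\alpha_k]$ lies in $\Av(2143,4312)$. First I would show that $\alpha_i$ must be a monotone sequence for every point $i$ of $\sigma$ other than the one singled out in the statement. This is the same sort of routine verification used in the Type 1 lemma: since $\sigma\in\E$ is simple of length at least $4$, the non-empty cells of $\E$ supply, for each such $i$, two further points of $\sigma$ whose positions force a non-monotone $\alpha_i$ (which necessarily contains both an ascent and a descent) to complete to a copy of $2143$ or of $4312$ in $\rho$. I would run this as a short cell-by-cell check following the reading order of $\E$, handling the cases $w$ ending in $a$, in $b$, and in $B$ in parallel.

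Then I would treat the exceptional point $p$: if $w$ ends in $b$ or $B$, $p$ is the point coded by that final letter; if $w$ ends in $a$, $p$ is the last point coded by $c$. In either case $p$ occupies a corner of the diagram with no point of $\sigma$ following it in the reading order, so the only obstructions to inflating it arise from the (at most two) non-empty cells adjacent to its cell. Tracing the relative patterns made by those points together with a hypothetical $312$ or $2143$ occurring inside $\alpha_p$ shows that $\rho\in\Av(2143,4312)$ precisely when $\alpha_p\in\Av(312,2143)$: a $312$ or a $2143$ in $\alpha_p$ extends to a forbidden pattern, while if $\alpha_p$ avoids both then no forbidden pattern can be formed.

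The main obstacle is this last step --- verifying that the exceptional class is exactly $\Av(312,2143)$, neither larger nor smaller, and confirming that the location of the exceptional point genuinely depends on whether $w$ terminates in $a$ as opposed to $b$ or $B$. Both amount to finite checks on the local geometry of $\E$ near the end of its reading order, but one has to keep careful track of the orientations of the neighbouring monotone cells to get the bounding patterns right.
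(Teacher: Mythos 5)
Your proposal is correct and follows essentially the same route as the paper: the generating function is obtained as $s_\E(x)-s^1_\E(x)$ (equivalently, by changing the accepting states of the same automaton), and the inflation claims are settled by the same kind of local check on the cells of $\E$ used for Type 1. The step you flag as the main obstacle is dispatched in the paper by a single observation: in every Type 2 case some point encoded by $a$ or $A$ lies above and to the left of the exceptional point, so a $312$ inside its inflation would complete to a $4312$-pattern, while $2143$-avoidance is inherited from the whole permutation; hence the exceptional inflation class is exactly $\Av(312,2143)$.
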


\begin{proof}
The generating function is clear from the previous lemma. Second, as in the analysis for the Type 1 simples, all points can only be inflated by $\Av(21)$ or $\Av(12)$ with one exception. This exception is as described in the statement of the lemma: note that in all of these cases, there exists some point encoded by $a$ or $A$ that lies above and to the left of the point in question, so any inflation must avoid $312$ to avoid creating a $4312$-pattern, in addition to avoiding $2143$.
\end{proof}

We are now in a position to enumerate the strong-indecomposables in $\E$, but this is not quite what we want: in order to enumerate the strong-indecomposables in $\Av(4312,2143)$, we must also be able to handle the inflations of permutations in $\E\cap\F$. Again, these divide into two types, which are analogous to the types given above: \emph{type 1} corresponds to those simple permutations whose encoding ends with $a$, and \emph{type 2} are those whose encoding ends with $b$. Applying a similar analysis to the above shows that each point in type 1 simples in $\E\cap\F$ can be inflated by $\Av(12)$ or $\Av(21)$ only, and the generating function is:
\[ s^1_{\E\cap\F}(x) = \frac{2x^4}{1-2x^2-x^3}.\]

Similarly, type 2 simples in  $\E\cap\F$ can only be inflated by $\Av(12)$ or $\Av(21)$, with the exception of one point which can be inflated by $\Av(312,2143)$, and the generating function for these simples is:
\[s^2_{\E\cap\F}(x) = \frac{2x^5}{1-2x^2-x^3}.\]

The generating function for the permutations in $\Av(312,2143)$ is given by \[g(x) = \frac{x(1-x)}{1-3x+x^2},\]
from which it follows that the strong-indecomposables in $\Av(4312,2143)$ of length at least $4$ have generating function:

\begin{eqnarray*}
f_{ind}(x) &=& \left[2s^1_\E\left(\frac{x}{1-x}\right)-s^1_{\E\cap\F}\left(\frac{x}{1-x}\right)\right] + g(x)\frac{1-x}{x}\left[ 2s^2_\E\left(\frac{x}{1-x}\right)-s^2_{\E\cap\F}\left(\frac{x}{1-x}\right)\right]\\
&=&\frac{2x^4(1-7x+17x^2-18x^3+11x^4-5x^5)}{(1-x)^2(1-3x)^2(1-3x+x^2)^2}.
\end{eqnarray*}

\subsection{Completing the enumeration}

To complete the enumeration, we first need to count the sum- and skew-decomposable permutations in $\Av(4312,2143)$, given by generating functions $f_\oplus(x)$ and $f_\ominus(x)$ respectively. The generating function of  the entire class will be denoted by $f(x)$.

Consider a sum-decomposable permutation $\pi=\pi_1\oplus\pi_2$, where $\pi_1$ is sum-indecomposable. If $\pi_1=1$, then $\pi_2$ can be any permutation from $\Av(4312,2143)$, while if $\pi_1$ has more than one point then it contains $21$ (since it is sum-indecomposable, and hence not increasing), and so $\pi_2\in \Av(21)$ in order to avoid creating a $2143$-pattern. In this latter case, $\pi_1$ must either be skew-decomposable, or strong-indecomposable of length at least $4$. Hence:
\[ f_\oplus(x) = xf(x) + (f_{ind}(x)+f_{\ominus}(x))\frac{x}{1-x}.\]

For the skew-decomposables, write $\pi=\pi_1\ominus\pi_2$ where we take $\pi_2$ to be skew-indecomposable. Note that, since $\pi_1$ is necessarily nonempty, we have $\pi_2\in\Av(312,2143)$ to avoid a $4312$-pattern. If $\pi_2=1$, then $\pi_1$ can be any permutation from $\Av(4312,2143)$, while if $\pi_2$ is a non-trivial skew-indecomposable in $\Av(312,2143)$, then $\pi_1\in\Av(21)$ in order to avoid creating a $4312$-pattern.

The generating function for the skew-indecomposables of length at least $2$ in $\Av(312,2143)$ is (see, e.g.~\cite{albert:the-enumeration:2143:4231})
\[g_{\not\ominus}(x) = \frac{x(1-x)^2}{1-3x+x^2}-x=\frac{x^2}{1-3x+x^2},\]
from which it follows that
\[f_\ominus(x) = xf(x) + \frac{x^2}{1-3x+x^2}\cdot \frac{x}{1-x}.\]

Substituting this expression for $f_\ominus(x)$ into the expression for $f_\oplus(x)$, and noting that $f(x)=1+x+f_\oplus(x)+f_\ominus(x)+f_{ind}(x)$ allows us to solve for $f(x)$:

\begin{eqnarray*}
f(x) &=& \frac{1-x}{1-3x+x^2}\cdot\left(1+x+\frac{f_{ind}(x)}{1-x}+\frac{x^3}{(1-3x+x^2)(1-x)^3}\right)\\
&=&\frac{1-13x+69x^2-191x^3+294x^4-252x^5+116x^6-23x^7}{(1-x)^2(1-3x)^2(1-3x+x^2)^2}
\end{eqnarray*}

The series expansion begins 1, 2, 6, 22, 86, 337, 1295, 4854, 17760, 63594, 223488, 772841, \ldots  (sequence A165529 of~\cite{sloane:the-on-line-enc:}).

\section{\texorpdfstring{$\Av(1324, 4312)$}{Av(1324, 4312)}}

\subsection{Simple permutations}

The main result of this subsection is that every simple permutation of $\Av(1324, 4312)$ has one of the following grid class forms:

\begin{center}
\psset{xunit=0.01in, yunit=0.01in} \psset{linewidth=0.006in}
\begin{pspicture}(0,0)(500,160)
\psline[linestyle=solid,linewidth=0.005in](0,20)(40,20)
\psline[linestyle=solid,linewidth=0.005in](0,40)(40,40)
\psline[linestyle=solid,linewidth=0.005in](0,60)(40,60)
\psline[linestyle=solid,linewidth=0.005in](0,20)(0,60)
\psline[linestyle=solid,linewidth=0.005in](20,20)(20,60)
\psline[linestyle=solid,linewidth=0.005in](40,20)(40,60)
\psline[linestyle=solid,linewidth=0.02in](2,42)(18,58)
\psline[linestyle=solid,linewidth=0.02in](22,58)(38,42)
\psline[linestyle=solid,linewidth=0.02in](22,22)(38,38)
\rput(20,0){$\T$}

\psline[linestyle=solid,linewidth=0.005in](80,20)(160,20)
\psline[linestyle=solid,linewidth=0.005in](80,40)(160,40)
\psline[linestyle=solid,linewidth=0.005in](80,60)(160,60)
\psline[linestyle=solid,linewidth=0.005in](80,80)(160,80)
\psline[linestyle=solid,linewidth=0.005in](80,100)(160,100)
\psline[linestyle=solid,linewidth=0.005in](80,120)(160,120)
\psline[linestyle=solid,linewidth=0.005in](80,140)(160,140)
\psline[linestyle=solid,linewidth=0.005in](80,160)(160,160)
\psline[linestyle=solid,linewidth=0.005in](80,20)(80,160)
\psline[linestyle=solid,linewidth=0.005in](100,20)(100,160)
\psline[linestyle=solid,linewidth=0.005in](120,20)(120,160)
\psline[linestyle=solid,linewidth=0.005in](140,20)(140,160)
\psline[linestyle=solid,linewidth=0.005in](160,20)(160,160)
\psline[linestyle=solid,linewidth=0.02in](102,142)(118,158)
\psline[linestyle=solid,linewidth=0.02in](142,122)(158,138)
\pscircle*(90,110){0.04in}
\psline[linestyle=solid,linewidth=0.02in](122,82)(138,98)
\psline[linestyle=solid,linewidth=0.02in](102,78)(118,62)
\psline[linestyle=solid,linewidth=0.02in](122,58)(138,42)
\psline[linestyle=solid,linewidth=0.02in](142,38)(158,22)
\rput(120,0){$\F$}
\psline[linestyle=solid,linewidth=0.005in](200,20)(340,20)
\psline[linestyle=solid,linewidth=0.005in](200,40)(340,40)
\psline[linestyle=solid,linewidth=0.005in](200,60)(340,60)
\psline[linestyle=solid,linewidth=0.005in](200,80)(340,80)
\psline[linestyle=solid,linewidth=0.005in](200,100)(340,100)
\psline[linestyle=solid,linewidth=0.005in](200,20)(200,100)
\psline[linestyle=solid,linewidth=0.005in](220,20)(220,100)
\psline[linestyle=solid,linewidth=0.005in](240,20)(240,100)
\psline[linestyle=solid,linewidth=0.005in](260,20)(260,100)
\psline[linestyle=solid,linewidth=0.005in](280,20)(280,100)
\psline[linestyle=solid,linewidth=0.005in](300,20)(300,100)
\psline[linestyle=solid,linewidth=0.005in](320,20)(320,100)
\psline[linestyle=solid,linewidth=0.005in](340,20)(340,100)
\pscircle*(250,90){0.04in}
\psline[linestyle=solid,linewidth=0.02in](202,62)(218,78)
\psline[linestyle=solid,linewidth=0.02in](222,22)(238,38)
\psline[linestyle=solid,linewidth=0.02in](262,42)(278,58)
\psline[linestyle=solid,linewidth=0.02in](282,78)(298,62)
\psline[linestyle=solid,linewidth=0.02in](302,58)(318,42)
\psline[linestyle=solid,linewidth=0.02in](322,38)(338,22)
\rput(270,0){$\S$}
\psline[linestyle=solid,linewidth=0.005in](380,20)(500,20)
\psline[linestyle=solid,linewidth=0.005in](380,40)(500,40)
\psline[linestyle=solid,linewidth=0.005in](380,60)(500,60)
\psline[linestyle=solid,linewidth=0.005in](380,80)(500,80)
\psline[linestyle=solid,linewidth=0.005in](380,100)(500,100)
\psline[linestyle=solid,linewidth=0.005in](380,120)(500,120)
\psline[linestyle=solid,linewidth=0.005in](380,140)(500,140)
\psline[linestyle=solid,linewidth=0.005in](380,20)(380,140)
\psline[linestyle=solid,linewidth=0.005in](400,20)(400,140)
\psline[linestyle=solid,linewidth=0.005in](420,20)(420,140)
\psline[linestyle=solid,linewidth=0.005in](440,20)(440,140)
\psline[linestyle=solid,linewidth=0.005in](460,20)(460,140)
\psline[linestyle=solid,linewidth=0.005in](480,20)(480,140)
\psline[linestyle=solid,linewidth=0.005in](500,20)(500,140)
\psline[linestyle=solid,linewidth=0.02in](382,102)(398,118)
\psline[linestyle=solid,linewidth=0.02in](402,122)(418,138)
\psline[linestyle=solid,linewidth=0.02in](402,58)(418,42)
\psline[linestyle=solid,linewidth=0.02in](422,62)(438,78)
\psline[linestyle=solid,linewidth=0.02in](422,38)(438,22)
\psline[linestyle=solid,linewidth=0.02in](442,82)(458,98)
\psline[linestyle=solid,linewidth=0.02in](462,118)(478,102)
\psline[linestyle=solid,linewidth=0.02in](482,98)(498,82)
\rput(440,0){$\X$}
\end{pspicture}
\end{center}

The names of the classes are chosen mnemonically since $\T$ is a \underline{T}wo-by four class, $\F$ is \underline{F}our-by-seven, $\S$ is \underline{S}even-by-four and $\X$ is si\underline{X}-by-six.  In the diagrams for $\F$ and $\S$ the cells containing a dot represent a single point or no point at all.  Cells containing monotone sequences may be empty (although simplicity places some constraints on this).

Notice that all these classes are certainly contained in $\Av(1324, 4312)$.  We shall now consider an arbitrary simple permutation $\pi\in\Av(1324, 4312)$ and prove it has one of the above forms.

%
%

We identify the first point $m$ and the greatest point $n$ of $\pi$ and use them to divide $\pi$ into 4 quadrants.

\begin{center}
\psset{xunit=0.010in, yunit=0.010in} \psset{linewidth=0.005in}
\begin{pspicture}(0,0)(80,80)
\psline[linestyle=solid,linewidth=0.005in](0,0)(80,0)
\psline[linestyle=solid,linewidth=0.005in](0,40)(80,40)
\psline[linestyle=solid,linewidth=0.005in](0,80)(80,80)
\psline[linestyle=solid,linewidth=0.005in](0,0)(0,80)
\psline[linestyle=solid,linewidth=0.005in](40,0)(40,80)
\psline[linestyle=solid,linewidth=0.005in](80,0)(80,80)
\pscircle*(0,40){0.04in}
\pscircle*(40,80){0.04in}
\psline[linestyle=solid,linewidth=0.005in](6,46)(34,74)
\psline[linestyle=solid,linewidth=0.005in](46,46)(60,72)
\psline[linestyle=solid,linewidth=0.005in](60,72)(74,46)
\psline[linestyle=solid,linewidth=0.005in](8,20)(34,34)
\psline[linestyle=solid,linewidth=0.005in](8,20)(34,6)
\rput(13,32){$m$}
\rput(50,72){$n$}
\rput(60,20){?}
\end{pspicture}
\end{center}

The NW quadrant is increasing because of 1324 avoidance.  The NE quadrant is wedge-shaped as $\wedge$ because of 312 and 213 avoidance, the SW quadrant is wedge shaped as $<$ because of 312 and 213 avoidance, and as yet the SE quadrant is unknown.

Consider the wedge in the NE quadrant.  We shall prove it is monotone.  If not then there is a proper apex and this point together with the two on either side forms either $acb\sim132$ or $bca\sim 231$.  Consider the first of these cases.  Since $cb$ is not an interval it has a splitting point $p$ which must lie in the NW quadrant or the SE quadrant.  In the former case $mpac\sim 1324$ and in the latter case $ncpb\sim 4312$ both of which are impossible.  Now consider the second case.  Here $bc$ is not an interval so has a splitting point $p$.  If $p$ is in the NW quadrant $mpbc\sim 1324$ and if $p$ is in the SE quadrant $nbpa\sim 4312$; again both of these are impossible.

Similarly (or by symmetry) the SW quadrant is monotone.

These two monotone quadrants cannot both be increasing and of length more than 1.  For if we had $ab\sim 12$ in the SW quadrant and $cd\sim 12$ in the NE quadrant each chosen as closest possible pairs then, as they are not intervals, they must each have separators elsewhere.  The separators cannot lie in the NW quadrant (else a 1324 pattern occurs) so must lie in the SE quadrant.  The $ab$ separator ($x$ say) must lie to the right of $d$ (else 1324 occurs again) and the $cd$ separator ($y$ say) must lie below $a$ (again because of 1324).  But then $ncxy\sim 4312$ which is also impossible.

We therefore have three possibilities: the NE quadrant is increasing of length more than 1 and the SW quadrant is decreasing, the NE quadrant is decreasing and the SW quadrant is increasing of length more than 1, or both of these quadrants are decreasing.  We shall see that these possibilities lead to $\pi\in \F$, $\pi\in \S$, or $\pi\in \T \cup \X$ respectively.

\begin{lemma}
If the NE (respectively SW) quadrant is increasing of length more than 1 then $\pi$ lies in $\F$ (respectively $\S$).
\end{lemma}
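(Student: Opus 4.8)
The two halves of the statement are exchanged by the symmetry $\psi$ that reflects each diagram in its anti-diagonal (equivalently, the composite of reverse, complement and inverse). Since $\psi$ fixes both $1324$ and $4312$ it preserves $\Av(1324,4312)$; it interchanges the first point of a permutation with its greatest point, it carries the NE quadrant onto the SW quadrant and back (preserving the property of being an increasing run of length more than $1$), and it carries $\F$ onto $\S$ — the last point being visible from the pictures, $\F$ being four-by-seven and $\S$ seven-by-four. Hence it suffices to assume that the NE quadrant is increasing of length more than $1$ and to prove $\pi\in\F$; recall that in this case the preceding discussion has already forced the NW quadrant to be increasing and the SW quadrant to be decreasing.

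Write the NE run, read from left to right, as $e_1<e_2<\dots<e_k$ with $k\ge 2$, each $e_i$ lying above the value of $m$ and below $n$. The plan is to determine the SE quadrant and the vertical stacking of the four blocks, after which the gridding can simply be read off. First, applying $4312$-avoidance to $n$, to $e_1$, and to two points of the SE quadrant lying to the right of $e_1$ shows that the SE quadrant has no increasing pair to the right of $e_1$, so that part of it is a single decreasing run. For the part $Q$ of the SE quadrant lying between $n$ and $e_1$ in position, I would argue that $Q$ contains no descent $ab$ for which the SW quadrant has a point below $b$, since such a point $s$ would produce $sabe_1\sim 1324$; feeding this back into $1324$- and $4312$-avoidance, together with simplicity (no interval of size two), should pin $Q$ down to at most the single increasing cell and single extra point that appear in the picture of $\F$. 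A similar use of $1324$ (with $m$, or a point of the NW quadrant, as the `$1$' and a point of the NE run as the `$4$') and of $4312$ (with $n$ and $e_1$) should then force the remaining vertical relations, namely that every point of the NE run lies below every point of the NW quadrant and that the SW and SE blocks sit in the rows shown.

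Once these facts are in place the conclusion is a matter of inspection: a vertical line just to the left of $n$ separates $m$ and the NW quadrant from the NE and SE quadrants, further vertical lines isolate the monotone runs within the SE block and within the NE run, horizontal lines at the value of $m$ and between consecutive blocks finish the gridding, and one checks that all slopes agree with the $7\times 4$ gridding matrix of $\F$. I expect the genuinely delicate step to be the analysis of $Q$ — the SE quadrant to the left of $e_1$ — and its interaction with the SW quadrant; the vertical-stacking facts and the final matching to the picture ought to be routine given what has already been established.
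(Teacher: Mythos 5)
Your proposal is a plan rather than a proof, and it stops exactly where the real work begins. The parts you do carry out are correct: the reduction of the SW case to the NE case via the anti-diagonal symmetry (reverse--inverse--reverse) is the same one the paper uses, and your two concrete deductions are sound --- the SE quadrant is decreasing to the right of $e_1$ because $n\,e_1\,x\,y\sim 4312$, and a descent $ab$ of your region $Q$ together with a lower SW point $s$ gives $sabe_1\sim 1324$. But everything after that is ``should'', ``ought to'' and ``I expect'', and those deferred steps are the substance of the lemma.

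Concretely, three things are missing. First, the structure of the SE quadrant: the paper does not split it positionally at $e_1$ as you propose, but by \emph{value} against the decreasing SW run $\delta$, into a band below $\delta$ (forced decreasing by $4312$ with $m$ and a point of $\delta$), a band interleaving $\delta$ (forced empty), and a band above $\delta$ (forced increasing and merging with the NE run into a single increasing sequence); it is not clear that your positional split can be pushed to the same conclusion without essentially redoing this. Second, the location of the minimum: the paper shows the element $1$ must lie positionally between the last two NE points $a,b$ and be followed only by $b$ --- this is what produces the isolated bottom-right cell of $\F$, and it comes from simplicity ($ab$ is not an interval, so it needs a separator, and $1324$/$4312$ force that separator to be the minimum and force the minimum to be penultimate). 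Your sketch never locates the minimum at all. Third, the fact that the NW quadrant contains no point forming an increasing run with $m$ other than $m$ itself; this again needs an interval argument ($mp$ would be an interval unless the top of $\delta$ separates it, whereupon either $1324$ appears or $m$ and that separator form an interval). This last point in particular cannot follow from ``a similar use of $1324$ and $4312$'' alone --- it genuinely requires simplicity --- so the final ``matching to the picture'' is not routine as claimed.
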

\begin{proof}
Let $a,b$ be the last two points in the NE quadrant.  Since $ab$ is not an interval and it has no left separator (by 1324) there is a separator  $s$ in the SE quadrant.  In fact the point 1 is a separator.  To see this we rule out the alternatives:

\begin{itemize}
\item $1$ is to the left of $a$.  Then $1asb\sim 1324$.
\item $1$ is to the right of $b$.  Then no point comes after 1 for any successor $u$ would lie in the SE quadrant and $nb1u\sim 4312$.  But a simple permutation cannot end with its minimum.
\end{itemize}

It also follows that $1$ is succeeded only by $b$ (because another successor $v$ would lie in the SE quadrant and then $na1v\sim 4312$).

Let $\delta$ be the sequence of points in the SW quadrant (known to form a decreasing sequence). In the SE quadrant  let $P$, $Q$, $R$ be the sets of points entirely  below $\delta$, between the first and last points of $\delta$, and entirely above $\delta$ respectively (all of which come before the point 1).  We have

\begin{itemize}
\item $P$ is decreasing (or there is a 4312 pattern with the initial point $m$ and a point of $\delta$ as the 4 and 3),
\item $Q$ is empty (or again 4312 with the initial point $m$ as the 4, and the top and bottom points of $\delta$ as the 3 and 1),
\item $R$ is increasing and, with the NE quadrant, is a single increasing sequence (because of 1324 avoidance and the fact that these points come before $b$).
\end{itemize}

The situation is now as shown, where we have also taken into account the fact that no point in the NW quadrant can separate any pair in the NE quadrant.
%
%


\begin{center}
\psset{xunit=0.02in, yunit=0.02in} \psset{linewidth=0.005in}
\begin{pspicture}(0,0)(80,80)
\psline[linestyle=solid,linewidth=0.005in](0,0)(80,0)
\psline[linestyle=solid,linewidth=0.005in](0,40)(80,40)
\psline[linestyle=solid,linewidth=0.005in](0,80)(80,80)
\psline[linestyle=solid,linewidth=0.005in](0,0)(0,80)
\psline[linestyle=solid,linewidth=0.005in](40,0)(40,80)
\psline[linestyle=solid,linewidth=0.005in](80,0)(80,80)
\pscircle*(0,40){0.04in}
\pscircle*(40,80){0.04in}
\rput(-3.5,44){$m$}
\rput(45,77){$n$}
\psline[linestyle=solid,linewidth=0.02in](13,26)(26,13)
\pscircle*(13,26){0.04in}
\pscircle*(26,13){0.04in}
\pscircle*(80,67){0.04in}
\pscircle*(76,63){0.04in}
\rput(23,22){$\delta$}
\rput(77,72){$b$}
\rput(71.5,64){$a$}
\psline[linestyle=dashed,linewidth=0.005in](78,63)(78,0)
\pscircle*(78,0){0.04in}
\pscircle*(68,55){0.04in}
\psline[linestyle=dashed,linewidth=0.005in](76,63)(68,55)
\psline[linestyle=dashed,linewidth=0.005in](0,13)(80,13)
\psline[linestyle=dashed,linewidth=0.005in](0,26)(80,26)
\psline[linestyle=solid,linewidth=0.02in](43,10)(76,3)
\psline[linestyle=solid,linewidth=0.02in](43,30)(65,37)
\psline[linestyle=dashed,linewidth=0.005in](0,53)(80,53)
\psline[linestyle=dashed,linewidth=0.005in](0,68)(80,68)
\psline[linestyle=solid,linewidth=0.02in](0,40)(20,53)
\psline[linestyle=solid,linewidth=0.02in](20,68)(40,80)

\end{pspicture}
\end{center}
Finally, the increasing sequence shown that begins at  $m$ must consist of $m$ alone.  For let $p$ be the next point in this sequence.  As $mp$ is not an interval the first point of $\delta$ must separate these two points.  But then either $dprb$ with $r\in R$ is a 1324 pattern or $R$ is empty and $md$ is then an interval.

This gives the class $\F$ and the other case of the lemma follows by symmetry.
\end{proof}

\begin{lemma}
If the NE and SW quadrants are decreasing then the permutation lies in $\T$ or $\X$.
\end{lemma}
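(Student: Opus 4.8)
The plan is to run the same programme as in the preceding lemma, but now with both extreme quadrants decreasing, and to isolate the single degeneracy that separates the $\T$ case from the $\X$ case. Write $\gamma$ for the decreasing NE sequence and $\delta$ for the decreasing SW sequence, and recall that the NW quadrant is an increasing sequence and that SE is still to be understood.

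First I would record the global consequence of $4312$-avoidance relative to $n$: since $n$ is the largest point of $\pi$ and lies strictly to the left of every point of NE~$\cup$~SE, any $312$ pattern inside NE~$\cup$~SE would extend by $n$ to a $4312$ in $\pi$, so the whole region to the right of $n$ avoids $312$. Combined with the fact that $\gamma$ sits above SE in value, and with $1324$-avoidance through $m$ (which lies below and to the left of SE), this pins down a monotone-cell description of SE: a descent in SE whose lower point has a larger point to its right gives a $312$ to the right of $n$, and --- prepending a point of $\delta$, which lies below and to the left of the upper part of SE --- an ascent in the upper part of SE gives a $1324$. I would push this through to show that SE, read from the bottom up, is wedge-shaped as $\wedge$: a decreasing bottom piece, then an increasing piece, then a decreasing top piece.

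Next, exactly as in the previous lemma, I would split SE into the parts $P$, $Q$, $R$ lying below $\delta$, level with $\delta$, and above $\delta$ in value, all of which precede the global minimum~$1$. The same $4312$-patterns, now using $m$ together with the extreme points of $\delta$, show that $Q$ is empty and $P$ is decreasing. For $R$ I would combine $312$-avoidance to the right of $n$ with $1324$-avoidance --- no point of NW may separate a pair of $\gamma$, and a point of $\delta$ sitting below and to the left of $R$ forbids a $213$ in $R$ --- to conclude that $R$ avoids $213$ and $312$, hence is itself wedge-shaped as $\wedge$, that $\gamma$ continues the decreasing end of $R$, and that the increasing end of $R$ together with the NW quadrant forms a single increasing sequence. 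The standard simplicity argument --- a second point on the increasing run emanating from $m$ would need a separator supplied by the first point of $\delta$, and this produces a $1324$ --- then shows that run is $\{m\}$ alone, and a symmetric remark controls the first point past $n$.

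Collecting these constraints, I would place horizontal grid lines just below and above $m$, the minimum $1$, the bottom and top of $\delta$, and the row of $m$, together with vertical lines at $n$ and at the turning point of the $R$-wedge; the resulting cells place $\pi$ inside the six-by-six picture of $\X$, with $P$ and the lowest cells in the bottom bands, $\delta$ and its decreasing continuation in the $\delta$ band, the $R$-wedge in the middle bands, and NW, $m$, $n$, $\gamma$ in the top bands. When the upper apparatus is vacuous --- concretely, when NW is empty and $R$ has no decreasing part, so that SE reduces to a single increasing sequence --- these extra cells disappear and only the three nonempty cells of $\T$ remain, giving $\pi\in\T$; in every other configuration $\pi\in\X$. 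I expect the hardest step to be pinning down the internal structure of SE and the precise way it meshes with $\gamma$ on the right and with NW above, since several sub-wedges interact there, and the whole $\T$-versus-$\X$ split rests on verifying that $1324$- and $4312$-avoidance genuinely forbid every configuration that would demand a grid line absent from both pictures.
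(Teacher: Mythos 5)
Your reading of the SE quadrant starts on the right track (the $312$-avoidance to the right of $n$, the $P,Q,R$ split, $Q$ empty and $P$ decreasing, $R\in\Av(213,312)$ via a $1324$ built from a point of the SW sequence), but the way you then attach the rest of the permutation contains errors carried over from the previous lemma, where the geometry was different. Every point of the NW quadrant lies \emph{above and to the left of} every point of $R$, so ``the increasing end of $R$ together with the NW quadrant forms a single increasing sequence'' is impossible as stated (that statement was true of the NE quadrant in the $\F$ case, which sits above and to the \emph{right} of $R$). Likewise the argument that the increasing run emanating from $m$ is $\{m\}$ alone does not transfer: in the class $\X$ the cell containing $m$ holds an arbitrary increasing sequence, and the $1324$ you would need (with a separator from $\delta$ as the `1') requires a point above the NW run and to the right of a point of $\lambda$, which need not exist here. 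What is actually required of NW --- and is missing from your argument --- is the splitting step: if the increasing part $\lambda$ of SE is non-empty, then a $1324$ with a $\lambda$-point as the `2' forces NW to decompose into a piece lying entirely before the first SW point and a piece lying entirely above the first NE point. That split, together with a second missing step (the $4312$ condition forcing each point of $\lambda$ either to follow all of $P$ or to lie below all of the decreasing top part of $R$, which breaks $\lambda$ into two diagonal cells), is precisely what produces the six-by-six gridding of $\X$; your proposed grid lines (at $n$, at the apex of $R$, around $m$ and $\delta$) give a much coarser picture that does not land inside $\X$.

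The $\T$ versus $\X$ dichotomy is also wrong, essentially reversed. The correct criterion is whether the increasing cell $\lambda$ of SE is empty: if it is, what remains is one increasing cell (NW with $m$ and $n$), one decreasing cell below it (SW followed by the low decreasing part of SE), and one decreasing cell to its right (NE followed by the high decreasing part of SE), and that is $\T$; if $\lambda\neq\emptyset$ one gets $\X$. Your criterion (``NW empty and SE a single increasing sequence'') misclassifies in both directions: a permutation with non-empty NW and SW whose SE consists only of the two decreasing runs belongs to $\T$ but is sent by you to $\X$, where it need not fit, since without $\lambda$ the NW split above is not forced; while a permutation with NW empty and SE a single increasing run does not fit the three cells of $\T$ at all (its SE lies below $m$ and after $n$, so it cannot share an increasing cell with them). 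You have correctly located the battleground --- the internal structure of SE and how it meshes with the four decreasing/increasing runs around it --- but the case analysis that resolves it is not the one you propose.
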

\begin{proof}
We begin by considering the effect of 1324 and 4312 avoidance on the SE quadrant:
\begin{center}
\psset{xunit=0.02in, yunit=0.02in} \psset{linewidth=0.005in}
\begin{pspicture}(0,0)(80,80)
\pspolygon[fillstyle=solid,linecolor=lightgray,fillcolor=lightgray](40,13)(80,13)(80,26)(40,26)
\pspolygon[fillstyle=solid,linecolor=lightgray,fillcolor=lightgray](53,0)(66,0)(66,40)(53,40)
\psline[linestyle=solid,linewidth=0.005in](0,0)(80,0)
\psline[linestyle=solid,linewidth=0.005in](0,40)(80,40)
\psline[linestyle=solid,linewidth=0.005in](0,80)(80,80)
\psline[linestyle=solid,linewidth=0.005in](0,0)(0,80)
\psline[linestyle=solid,linewidth=0.005in](40,0)(40,80)
\psline[linestyle=solid,linewidth=0.005in](80,0)(80,80)
\pscircle*(0,40){0.04in}
\pscircle*(40,80){0.04in}

\psline[linestyle=solid,linewidth=0.02in](13,26)(26,13)
\pscircle*(13,26){0.04in}
\pscircle*(26,13){0.04in}

\psline[linestyle=solid,linewidth=0.02in](53,66)(66,53)
\pscircle*(53,66){0.04in}
\pscircle*(66,53){0.04in}

\psline[linestyle=dashed,linewidth=0.005in](13,26)(80,26)
\psline[linestyle=dashed,linewidth=0.005in](26,13)(80,13)
\psline[linestyle=dashed,linewidth=0.005in](53,66)(53,00)
\psline[linestyle=dashed,linewidth=0.005in](66,53)(66,0)


\psline[linestyle=solid,linewidth=0.02in](3,43)(37,77)
\psline[linestyle=solid,linewidth=0.02in](43,30)(50,37)

\psline[linestyle=solid,linewidth=0.02in](69,37)(77,30)
\psline[linestyle=solid,linewidth=0.02in](43,9)(50,3)
\psline[linestyle=solid,linewidth=0.02in](69,9)(77,3)

\rput(43,36){$\lambda$}

\end{pspicture}
\end{center}

In this diagram the SE quadrant has been partitioned into 9 cells.  The cell with content $\lambda$ is increasing because of 1324 avoidance, the 5 gray cells are all empty because of  4312 avoidance, and the remaining 3  cells are decreasing also because of 4312 avoidance.

Now consider the decreasing content in the bottom right corner of the SE quadrant.  To avoid 4312, any point in this cell comes after and below both the other two decreasing cells in this quadrant. Thus it is a skew component and therefore empty.

If the cell marked $\lambda$ is empty we have class $\T$.  If not the situation in the NW cell is as shown below with no point in the region marked ``Empty'' otherwise we would have a 1324 pattern with a point of $\lambda$ playing the role of 2.

\begin{center}
\psset{xunit=0.02in, yunit=0.02in} \psset{linewidth=0.005in}
\begin{pspicture}(0,0)(80,80)
\pspolygon[fillstyle=solid,linecolor=lightgray,fillcolor=lightgray](40,13)(80,13)(80,26)(40,26)
\pspolygon[fillstyle=solid,linecolor=lightgray,fillcolor=lightgray](53,0)(66,0)(66,40)(53,40)

\psline[linestyle=solid,linewidth=0.005in](0,0)(80,0)
\psline[linestyle=solid,linewidth=0.005in](0,40)(80,40)
\psline[linestyle=solid,linewidth=0.005in](0,80)(80,80)
\psline[linestyle=solid,linewidth=0.005in](0,0)(0,80)
\psline[linestyle=solid,linewidth=0.005in](40,0)(40,80)
\psline[linestyle=solid,linewidth=0.005in](80,0)(80,80)
\pscircle*(0,40){0.04in}
\pscircle*(40,80){0.04in}

\psline[linestyle=solid,linewidth=0.02in](13,26)(26,13)
\pscircle*(13,26){0.04in}
\pscircle*(26,13){0.04in}

\psline[linestyle=solid,linewidth=0.02in](53,66)(66,53)
\pscircle*(53,66){0.04in}
\pscircle*(66,53){0.04in}

\psline[linestyle=dashed,linewidth=0.005in](13,26)(80,26)
\psline[linestyle=dashed,linewidth=0.005in](26,13)(80,13)
\psline[linestyle=dashed,linewidth=0.005in](53,66)(53,00)
\psline[linestyle=dashed,linewidth=0.005in](66,53)(66,0)


\psline[linestyle=solid,linewidth=0.02in](43,30)(50,37)

\psline[linestyle=solid,linewidth=0.02in](69,37)(77,30)
\psline[linestyle=solid,linewidth=0.02in](43,9)(50,3)

\rput(43,36){$\lambda$}

\rput(27,53){Empty}

\psline[linestyle=dashed,linewidth=0.005in](13,26)(13,80)
\psline[linestyle=dashed,linewidth=0.005in](53,66)(0,66)

\psline[linestyle=solid,linewidth=0.02in](3,43)(13,66)
\psline[linestyle=solid,linewidth=0.02in](13,66)(37,77)

\end{pspicture}
\end{center}
Finally, to avoid 4312, any point of $\lambda$ cannot simultaneously lie to the left of a point in the decrease below it, and above a point in the decrease to its right. This condition yields the class $\X$.
\end{proof}

\subsection{Introduction to the enumeration}

The automata in this section are the most complex in the paper, yet their construction is not, in principle, very difficult.  So, rather than set out every step in detail, we shall instead trust the reader to supply the various rules for making encodings unique in the same way we did in the previous section.  A guiding principle in selecting the encoding word for a permutation is to use the lexicographically least of all the possibilities.

We have to consider a number of sets $C$ of permutations and, uniformly, we let $f_C$ denote the generating function for $C$, $s_C$ the generating function for the simples (of length more than 2) in $C$, and $i_C$ the generating function for the inflations in $\Av(1324, 4312)$ of these simples (these inflations are, of course, the strong-indecomposables).

Apart from the classes $\T,\S,\F,\X$ a number of other sets of permutations feature in our analysis:

\begin{itemize}
\item
$\I = \Av(21)$ and $\D = Av(12)$
\item $\E=\Av(213,312)$, which we previously denoted by $\wedge$, (and its symmetries) whose generating function is $f_\E(x) = \frac{x}{1-2x}$.
\item $\G=\Av(213,4312)$  (and its symmetries), with generating function $f_\G(x) = \frac{x(1-3x+3x^2)}{(1-x)(1-2x)^2}$ (see, e.g.~\cite{atkinson:restricted-perm:}).
\item $W=$ `Wedge simples': Wedge permutations of length more than 2 are never simple but there are simple permutations which are almost wedges as shown here ($<\hspace{-5pt}\cdot$ for brevity).  There are two of this $<\hspace{-5pt}\cdot$ type in every length from length 4 onwards.
\begin{center}
\psset{xunit=0.01in, yunit=0.01in} \psset{linewidth=0.005in}
\begin{pspicture}(0,0)(200,80)
\psline[linestyle=solid,linewidth=0.005in](0,0)(0,80)
\psline[linestyle=solid,linewidth=0.005in](0,0)(80,0)
\psline[linestyle=solid,linewidth=0.005in](80,0)(80,80)
\psline[linestyle=solid,linewidth=0.005in](0,80)(80,80)
\psline[linestyle=solid,linewidth=0.005in](120,0)(120,80)
\psline[linestyle=solid,linewidth=0.005in](120,0)(200,0)
\psline[linestyle=solid,linewidth=0.005in](200,0)(200,80)
\psline[linestyle=solid,linewidth=0.005in](120,80)(200,80)
\psline[linestyle=dashed,linewidth=0.005in](0,40)(80,40)

\pscircle*(80,40){0.04in}
\pscircle*(0,30){0.04in}
\pscircle*(20,20){0.04in}
\pscircle*(40,10){0.04in}
\pscircle*(60,0){0.04in}
\pscircle*(10,50){0.04in}
\pscircle*(30,60){0.04in}
\pscircle*(50,70){0.04in}
\pscircle*(70,80){0.04in}

\pscircle*(200,40){0.04in}
\pscircle*(120,50){0.04in}
\pscircle*(130,30){0.04in}
\pscircle*(150,20){0.04in}
\pscircle*(170,10){0.04in}
\pscircle*(190,0){0.04in}
\pscircle*(140,60){0.04in}
\pscircle*(160,70){0.04in}
\pscircle*(180,80){0.04in}
\psline[linestyle=dashed,linewidth=0.005in](120,40)(200,40)
\end{pspicture}
\end{center}
Symmetries of this set  under reverse-inverse-reverse symmetry (with shape $\wedge\hspace{-5pt}_{.}\ $) will also be called wedge simples.  These two varieties of wedge simples intersect in $\{2413, 3142, 24153\}$.

\item Sets `$4\times 7$' and `$2\times 4$': we partition $\F$ into two subsets. If the singleton cell of a (simple) permutation in class $\F$ is empty, the permutation reduces to the form:
\begin{center}
\psset{xunit=0.01in, yunit=0.01in} \psset{linewidth=0.005in}
\begin{pspicture}(0,0)(40,80)
\drawlines(2,4)
\psline[linestyle=solid,linewidth=0.02in](3,63)(17,77)
\psline[linestyle=solid,linewidth=0.02in](3,37)(17,23)
\psline[linestyle=solid,linewidth=0.02in](23,43)(37,57)
\psline[linestyle=solid,linewidth=0.02in](23,17)(37,3)
\end{pspicture}
\end{center}
We call this class of permutations $2\times 4$ whereas the set of permutations in which the singleton cell does contain a point is called $4\times 7$ (but note that some permutations of $4\times 7$ may be represented in a smaller grid than 4 columns and 7 rows).
\item Sets `$3\times 3$' and `$6\times 6$': in the class $\X$ there is a subset of permutations of the form
\begin{center}
\psset{xunit=0.01in, yunit=0.01in} \psset{linewidth=0.005in}
\begin{pspicture}(0,0)(60,60)
\drawlines(3,3)
\psline[linestyle=solid,linewidth=0.02in](3,23)(17,37)
\psline[linestyle=solid,linewidth=0.02in](23,43)(37,57)
\psline[linestyle=solid,linewidth=0.02in](23,17)(37,3)
\psline[linestyle=solid,linewidth=0.02in](57,23)(43,37)
\end{pspicture}
\end{center}
We call this set of permutations $3\times 3$.  Permutations of $\X$ not in $3\times 3$ form the set $6\times 6$ (and again note that some of these may be represented in fewer than 6 rows and columns).
\end{itemize}

%
%

%

%
%
%
%
%
%
%
%
%
%
%

\subsection{The two by two class \texorpdfstring{$\T$}{T}}

We enumerate the simples of length $\geq 4$ using the following encoding scheme and automaton:

\begin{center}
\begin{tabular}{ccc}
\psset{xunit=0.012in, yunit=0.012in} \psset{linewidth=0.005in}
\begin{pspicture}(0,0)(40,40)
\drawlines(2,2)
\psline[linestyle=solid,linewidth=0.02in]{<-}(3,23)(17,37)
\psline[linestyle=solid,linewidth=0.02in]{<-}(3,17)(17,3)
\psline[linestyle=solid,linewidth=0.02in]{->}(23,37)(37,23)
\rput(34,34){\footnotesize $a$}
\rput(6,34){\footnotesize $b$}
\rput(6,6){\footnotesize $c$}
\end{pspicture}
&\rule{10pt}{0pt}&
{\small
\scalebox{0.8}{
\begin{picture}(66,22)(0,-32)
\node[Nmarks=i](n0)(16.0,-12.0){}
\node[NLangle=0.0](n1)(36.0,-12.0){$c$}
\node[NLangle=0.0](n2)(56.0,-12.0){$b$}
\node[NLangle=0.0,Nmarks=r](n3)(36.0,-32.0){$ab$}
\node[NLangle=0.0,Nmarks=r](n4)(56.0,-32.0){$a$}
\drawedge(n0,n1){$c+ac$}
\drawedge(n4,n1){$c$}
\drawedge(n3,n1){$c$}
\drawedge(n2,n4){$a$}
\drawedge[curvedepth=1.98](n2,n1){$c$}
\drawedge[curvedepth=1.98](n3,n4){$a$}
\drawedge[curvedepth=1.85](n4,n3){$b$}
\drawedge[curvedepth=1.98](n1,n2){$b$}
\end{picture}}}
\end{tabular}
\end{center}

As given, this automaton accepts one word of length 3 which it should not. It is possible to avoid this, but the resulting automaton is significantly more complicated: it is easier simply to subtract $x^3$ from the generating function it defines. This yields:
\[ s_T(x) = \frac{x^4(2+x)}{1-x-x^2}.\]

\subsubsection{Inflations of \texorpdfstring{$\T$}{T}}
The inflations of the simple permutations depend somewhat on initial and final points in various cells and it is necessary to refine the automaton above to reflect the various cases.  We distinguish 4 types that we can conveniently describe by the first and last code letters of their encodings:

\begin{itemize}
\item $c\cdots a$ (i.e. code words beginning with $c$ and ending with $a$): In this case, the first point encoded by $a$ and the last point encoded by $c$ inflate to $\Av(213,312)$ and $\Av(132,312)$ respectively.
\item $c\cdots b$: The first point encoded by $a$ inflates to $\Av(213,312)$, and the pair $(b_{last},c_{last})$ is either $(\G\setminus \I,\D)$ or $(\I,\E)$ (symmetry may need to be applied to $\G$ and $\E$).
\item $a\cdots a$: The first two points encoded by $a$ are $(\G\setminus \I,\D)$ or $(\I,\E)$, and the last point of $c$ is $\Av(132,312)$.
\item $a\cdots b$: The first two points encoded by $a$, and the last points encoded by $b$ and $c$ forms pairs which inflate by $(\G\setminus \I,\D)$ or $(\I,\E)$.
\end{itemize}

Let $s_{ca}$, $s_{cb}$ etc denote the generating function for the simples in each of these cases. Modifying the automaton in each case, we get:
\begin{eqnarray*}
s_{ca}(x) = \frac{x^5(2+x)}{1-2x^2-x^3} && s_{cb}=\frac{x^4}{1-2x^2-x^3}\\
s_{aa}(x) = \frac{x^4}{1-2x^2-x^3} && s_{ab}=\frac{x^5}{1-2x^2-x^3}
\end{eqnarray*}

Now let $i_{ca}$, $i_{cb}$ etc denote the generating function of the inflations of the corresponding simple permutations.  We obtain the following expressions. For $i_{ca}$:
\[i_{ca}(x) = s_{ca}\left(\frac{x}{1-x}\right)\cdot \left(\frac{1-x}{x}\right)^2\cdot f_\E(x)^2 = \frac{x^5(2-x)}{(1-2x)^2(1-3x+x^2)(1-x)}\]
Similarly, we obtain: 
\[i_{cb}(x) = = i_{aa} = \frac{x^4(1-2x+x^2)}{(1-2x)^3(1-3x+x^2)}\qquad i_{ab}(x) = \frac{x^5(1-x)^2}{(1-2x)^4(1-3x+x^2)}.\]
Adding them together:
\[ i_\T(x) = \frac{x^4(2-7x+6x^2+x^3-x^4)}{(1-x)(1-2x)^4(1-3x+x^2)} \]

\subsection{The four by seven class \texorpdfstring{$\F$}{F}, and its symmetry class \texorpdfstring{$\S$}{S}}\label{FS-section}

The simples in the class $\F$ lie in one of $2\times 4$ or $4\times 7$.
%
To enumerate the $2\times 4$ grid class we use the following encoding and automaton:
\begin{center}
\begin{tabular}{ccc}
\psset{xunit=0.012in, yunit=0.012in} \psset{linewidth=0.005in}
\begin{pspicture}(0,0)(40,80)
\drawlines(2,4)
\psline[linestyle=solid,linewidth=0.02in]{->}(3,63)(17,77)
\psline[linestyle=solid,linewidth=0.02in]{->}(3,37)(17,23)
\psline[linestyle=solid,linewidth=0.02in]{->}(23,43)(37,57)
\psline[linestyle=solid,linewidth=0.02in]{->}(23,17)(37,3)
\rput(6,74){\footnotesize $a$}
\rput(6,26){\footnotesize $b$}
\rput(26,6){\footnotesize $d$}
\rput(26,54){\footnotesize $c$}
\end{pspicture}
&\rule{10pt}{0pt}&
{\small
\scalebox{0.8}{
\begin{picture}(66,26)(0,-36)
\node[Nmarks=i](n0)(13.0,-12.0){}
\node[NLangle=0.0](n1)(36.0,-12.0){$a$}
\node[NLangle=0.0](n2)(56.0,-12.0){$b$}
\node[NLangle=0.0,Nmarks=r](n3)(36.0,-32.0){$c$}
\node[NLangle=0.0](n4)(56.0,-32.0){$d$}
\drawedge(n0,n1){$aba+ba$}
\drawedge(n1,n3){$c$}
\drawedge(n2,n3){$c$}
\drawedge[curvedepth=1.98](n2,n1){$a$}
\drawedge[curvedepth=1.98](n3,n4){$d$}
\drawedge[curvedepth=1.85](n4,n3){$c$}
\drawedge[curvedepth=1.98](n1,n2){$b$}
\end{picture}}}
\end{tabular}
\end{center}
Notice that this automaton again accepts a spurious length 3 word, and again it is easier simply to delete this from the generating function. After correction, the generating function is
\[ s_{2\times 4}(x) = \frac{x^4(2-x)}{(1-x)^2}.\]

For the $4\times 7$ set we use the following encoding scheme to obtain the generating function $s_{4\times 7}(x)$:
\begin{center}
\begin{tabular}{ccccc}
\psset{xunit=0.012in, yunit=0.012in} \psset{linewidth=0.005in}
\begin{pspicture}(0,0)(80,140)
\drawlines(4,7)
\psline[linestyle=solid,linewidth=0.02in]{->}(23,123)(37,137)
\psline[linestyle=solid,linewidth=0.02in]{->}(23,57)(37,43)
\psline[linestyle=solid,linewidth=0.02in]{->}(43,63)(57,77)
\psline[linestyle=solid,linewidth=0.02in]{->}(43,37)(57,23)
\psline[linestyle=solid,linewidth=0.02in]{->}(63,103)(77,117)
\psline[linestyle=solid,linewidth=0.02in]{->}(63,17)(77,3)
\rput(34,126){\footnotesize $a$}
\rput(26,46){\footnotesize $b$}
\rput(54,66){\footnotesize $c$}
\rput(46,26){\footnotesize $d$}
\rput(74,106){\footnotesize $e$}
\rput(65,5){\footnotesize $f$}
\cellclass(1,5){$\bullet$}
\rput(5,85){\footnotesize $x$}
\end{pspicture}
&\rule{10pt}{0pt}&
{\small
\scalebox{0.8}{
\begin{picture}(64,52)(10,-64)
\node[Nmarks=i](i)(10.0,-16.0){}
\node(a)(36.0,-16.0){$a$}
\node(b)(56.0,-16.0){$b$}
\node(c)(36.0,-36.0){$c$}
\node(d)(56.0,-36.0){$d$}
\node[Nmarks=r](e)(36.0,-56.0){$e$}
\node(f)(56.0,-56.0){$f$}
\drawedge[curvedepth=1.98](a,b){$b$}
\drawedge[curvedepth=1.85](b,a){$a$}
\drawedge[curvedepth=2.12](c,d){$d$}
\drawedge[curvedepth=1.85](d,c){$c$}
\drawedge[curvedepth=1.98](e,f){$f$}
\drawedge[curvedepth=1.98](f,e){$e$}
\drawedge(i,a){$xaba+xba$}
\drawedge(a,c){$c$}
\drawedge(b,c){$c$}
\drawedge(c,e){$e$}
\drawedge(d,e){$e$}
\end{picture}}}
&\rule{10pt}{0pt}&
\psset{xunit=0.012in, yunit=0.012in} 
\begin{pspicture}(0,0)(30,140)
\rput[c](15,70){$\displaystyle s_{4\times 7}(x) = \frac{x^5}{(1-x)^3}$}
\end{pspicture}
\end{tabular}
\end{center}
Combining the two cases, we obtain
\[ s_{\F}(x) = \frac{x^4(2-2x+x^2)}{(1-x)^3}.\]
The enumeration of the simples of class $\S$ is identical by symmetry.

\subsubsection{Inflations of \texorpdfstring{$\F$}{F} and \texorpdfstring{$\S$}{S}}

The analysis of the inflations of the $4\times 7$ subset of $\F$ is quite straightforward.  In any permutation of this set the first point of those encoded by $b$ inflates to $\Av(132, 213)$ while the last point of those encoded by $e$ inflates to $\Av(213, 312)$, and all other points inflate to monotone sequences.  Therefore
\[i_{4\times7}(x) = s_{4\times 7}\left(\frac{x}{1-x}\right) \cdot \left(\frac{1-x}{x}\right)^2\cdot f_\E(x)^2 = \frac{x^5}{(1-2x)^5}\]

However, the analysis of the $2\times 4$ subset is more subtle and, just as for the inflations of the permutations of $\T$, it is necessary to distinguish several types, refine the automaton for each type and determine how the points of each type are inflated.  Nevertheless the analysis is similar to that of class $\T$ and so we merely summarize the derivation of the generating functions of the inflations of each type.  We divide into 7 types.  In the following table we give the code words for each type, the corresponding generating function for the simples, the factor of the generating function for the inflations arising from points that do not inflate monotonically, and the resulting contribution to this generating function.

\centerline{
\begin{tabular}{|llll|}
\hline
Code words&Simples&Non-monotone factor&Inflations\\
\hline
$aba\cdots cdc$&$\frac{x^6}{(1-x)^2(1+x)}$&$f_\E^2$&$\frac{x^6}{(1-2x)^4(1-x)}$\\
$aba\cdots bc$&$\frac{x^5}{(1-x^2)}$&$f_\E^2$&$\frac{x^5}{(1-2x)^3(1-x)}$\\
$aba\cdots ac$&$\frac{x^4}{(1-x^2)}$&$f_\E(f_\G+f_\E- x/(1-x))$&$\frac{x^4(1-x)}{(1-2x)^4}$\\
$bac\cdots dc$&$\frac{x^5}{(1-x^2)}$&$f_\E f_\G$&$\frac{x^5(1-3x+3x^2)}{(1-2x)^4(1-x)^2}$\\
$ba \cdots bc$&$\frac{x^4}{(1-x^2)}$&$f_\E (f_\G + f_\E - x/(1-x))$&$\frac{x^4(1-x)}{(1-2x)^4}$\\
$bab \cdots ac$&$\frac{x^5}{(1-x^2)}$&$(f_\G + f_\E- x/(1-x))^2$&$\frac{x^5(1-x)}{(1-2x)^5}$\\
$bab \cdots cdc$&$\frac{x^6}{(1-x)^2(1+x)}$&$f_\E (f_\G + f_\E - x/(1-x))$&$\frac{x^6}{(1-2x)^5}$\\
\hline
\end{tabular}
}

%
Summing the contributions, we get:
\[i_{2\times 4}(x) = \frac{x^4(2-7x+7x^2+x^3-4x^4)}{(1-x)^2(1-2x)^5}.\]

Consequently
\[i_\F (x)=i_{4\times 7}(x)+i_{2\times 4}(x)=\frac{x^4(2-6x+5x^2+2x^3-4x^4)}{(1-x)^2(1-2x)^5}.\]

\subsection{The six by six class \texorpdfstring{$\X$}{X}}

In general we shall encode the permutations of $\X$ by the following scheme:

\begin{center}
\psset{xunit=0.01in, yunit=0.01in} \psset{linewidth=0.005in}
\begin{pspicture}(0,0)(120,120)
\drawlines(6,6)
\psline[linestyle=solid,linewidth=0.02in]{->}(3,83)(17,97)
\psline[linestyle=solid,linewidth=0.02in]{->}(23,103)(37,117)
\psline[linestyle=solid,linewidth=0.02in]{->}(23,37)(37,23)
\psline[linestyle=solid,linewidth=0.02in]{->}(43,43)(57,57)
\psline[linestyle=solid,linewidth=0.02in]{->}(43,17)(57,3)
\psline[linestyle=solid,linewidth=0.02in]{->}(63,63)(77,77)
\psline[linestyle=solid,linewidth=0.02in]{->}(97,83)(83,97)
\psline[linestyle=solid,linewidth=0.02in]{->}(117,63)(103,77)
\rput(6,94){\footnotesize $a$}
\rput(95,94){\footnotesize $b$}
\rput(26,114){\footnotesize $c$}
\rput(25,26){\footnotesize $d$}
\rput(46,6){\footnotesize $e$}
\rput(46,54){\footnotesize $f$}
\rput(66,74){\footnotesize $g$}
\rput(115,74){\footnotesize $h$}
\end{pspicture}
\end{center}

However there are some degenerate cases when certain cells are empty that it is convenient to handle separately.
For example, if there are no points encoded by $a$ or by $b$, then the simple permutations are the same as given by the $2\times 4$ set above. Similarly, if there are no points encoded by $c$ or by $d$, then we have the reverse-inverse-reverse of that set, which has the same generating function.

Thus, we will consider the case that there are points encoded by all of $a,b,c$ and $d$. If there are no other code letters, then we are enumerating points in the $3\times 3$ class. We note that the class $3\times 3$ contains the wedge simple permutations of both varieties, but these have already been counted by $s_{2\times 4}$ or its symmetry so we do not wish to count these again. In the automaton below, this is achieved by requiring that accepted words must contain at least two $b$s and at least two $d$s. We thus obtain:
\begin{center}
\begin{tabular}{ccccc}
\psset{xunit=0.012in, yunit=0.012in} \psset{linewidth=0.005in}
\begin{pspicture}(0,-20)(60,60)
\drawlines(3,3)
\psline[linestyle=solid,linewidth=0.02in]{->}(3,23)(17,37)
\psline[linestyle=solid,linewidth=0.02in]{->}(23,43)(37,57)
\psline[linestyle=solid,linewidth=0.02in]{->}(23,17)(37,3)
\psline[linestyle=solid,linewidth=0.02in]{->}(57,23)(43,37)
\rput(6,34){\footnotesize $a$}
\rput(26,54){\footnotesize $c$}
\rput(26,6){\footnotesize $d$}
\rput(54,34){\footnotesize $b$}
\end{pspicture}
&\rule{10pt}{0pt}&
{\small
\scalebox{0.8}{
\begin{picture}(56,40)(26,-32)
\node[Nmarks=i](n0)(46.0,8.0){}
\node[NLangle=0.0](n1)(36.0,-12.0){$a$}
\node[NLangle=0.0](n2)(56.0,-12.0){$b$}
\node[NLangle=0.0,Nmarks=r](n3)(36.0,-32.0){$d$}
\node[NLangle=0.0,Nmarks=r](n4)(56.0,-32.0){$c$}
\drawedge(n0,n2){$bab+abab$}
\drawedge(n1,n3){$dcd$}
\drawedge(n2,n3){$dcd$}
\drawedge[curvedepth=1.98](n2,n1){$a$}
\drawedge[curvedepth=1.98](n3,n4){$c$}
\drawedge[curvedepth=1.85](n4,n3){$d$}
\drawedge[curvedepth=1.98](n1,n2){$b$}
\end{picture}}}
&\rule{10pt}{0pt}&
\psset{xunit=0.012in, yunit=0.012in} \psset{linewidth=0.005in}
\begin{pspicture}(0,-20)(30,60)
\rput[c](15,40){$\displaystyle s_{3\times 3\setminus W}=\frac{x^6(1+x)}{(1-x)^2}$}
\end{pspicture}
\end{tabular}
\end{center}



We now turn to the enumeration of the remaining simples in the class, namely those involving all of the code letters $a$, $b$, $c$ and $d$, and  at least one other letter from $e$, $f$, $g$ and $h$ (these simples all belong to the set called $6\times 6$). The following automaton is actually less complex than it looks at first sight.  Apart from ensuring that all $a$'s and $b$'s come before all $c$'s and $d$'s etc., it also enforces that the same permutation is not coded by different words.  For example, the first $d$ comes before the first $c$ otherwise the first $c$ could have been coded by an $a$ with appropriate redrawing of gridlines.  This and similar restrictions gives us the automaton and generating function:

\begin{center}
\begin{tabular}{ccc}
{\small
\scalebox{0.8}{
\begin{picture}(64,60)(0,-76)
\node[Nmarks=i](i)(10.0,-16.0){}
\node(a)(66.0,-16.0){$a$}
\node(b)(36.0,-16.0){$b$}
\node(c)(66.0,-36.0){$c$}
\node(d)(36.0,-36.0){$d$}
\node[Nmarks=r](e)(66.0,-56.0){$e$}
\node[Nmarks=r](f)(36.0,-56.0){$f$}
\node[Nmarks=r](g)(66.0,-76.0){$g$}
\node(h)(36.0,-76.0){$h$}
\drawedge[curvedepth=1.98](a,b){$b$}
\drawedge[curvedepth=1.85](b,a){$a$}
\drawedge[curvedepth=2.12](c,d){$d$}
\drawedge[curvedepth=1.85](d,c){$c$}
\drawedge[curvedepth=1.98](e,f){$f$}
\drawedge[curvedepth=1.98](f,e){$e$}
\drawedge[curvedepth=1.98](g,h){$h$}
\drawedge[curvedepth=1.98](h,g){$g$}
\drawedge(i,b){$ab+bab$}
\drawedge(a,c){$dc$}
\drawedge(b,c){$dc$}
\drawedge(c,f){$f$}
\drawedge(d,f){$f$}
\drawedge[curvedepth=5](c,h){$h$}
\drawedge[curvedepth=-8](d,h){$h$}
\drawedge[curvedepth=1](e,h){$h$}
\drawedge(f,h){$h$}
\end{picture}}}
&\rule{10pt}{0pt}&
\psset{xunit=0.025in, yunit=0.025in} \psset{linewidth=0.005in}
\begin{pspicture}(0,0)(80,80)
\rput[c](40,40){$\displaystyle s_{6\times 6}(x) = \frac{x^5(1+x-x^2)}{(1-x)^4}$}
\end{pspicture}
\end{tabular}
\end{center}

We now need to combine the three generating functions given above, taking into consideration any intersections to avoid overcounting. The simples enumerated by $s_{3\times3\setminus W}(x)$ deliberately exclude the  wedge simple permutations and, because of this, their intersection with the simples enumerated by $s_{2\times4}(x)$ and its symmetry is empty. Similarly, the definition of the class $6\times 6$ was such that it was disjoint from the other subcases. Consequently, the only intersection that we need to consider is $2\times 4$ with its symmetry under reverse-inverse-reverse. It is straightforward to check that the only simples in this intersection are 2413, 3142 and 24153. The generating function for the simples in $X$ is therefore:
\[s_X(s) = s_{6\times 6}(x)+2s_{2\times 4}(x)+s_{3\times 3\setminus W}(x) - 2x^4 - x^5 = \frac{x^4(2-2x+2x^2-2x^3+x^4)}{(1-x)^4}.\]

\subsubsection{Inflations of \texorpdfstring{$\X$}{X}}
First, we shall take the inflations of $6\times6$ into account: all but two points are inflated monotonically, and the remaining two (last point of $b$, and first point of $d$) are inflated by (symmetries of) $\E$. Hence
\[ i_{6\times 6}(x) = s_{6\times6}\left(\frac{x}{1-x}\right)\cdot\left(\frac{1-x}{x}\right)^2\cdot f_\E (x)^2 = \frac{x^5(1-x-x^2)}{(1-x)(1-2x)^6}.\]

Next, Section \ref{FS-section} provides the inflations of $2\times 4$ and its symmetry. This leaves the $3\times3$ class, which splits into four cases summarized below in the same way we summarized the $2\times 4$ class:

\centerline{
\begin{tabular}{|llll|}
\hline
Code words&Simples&Non-monotone factor&Inflations\\
\hline
$bab\cdots cd$&$\frac{x^6}{(1-x)^2(1+x)}$&$f_\E^2$&$\frac{x^6}{(1-2x)^4(1-x)}$\\
$bab\cdots dc$&$\frac{x^7}{(1-x)^2(1+x)}$&$f_\E(f_\E+f_\G- x/(1-x))$&$\frac{x^7}{(1-2x)^5(1-x)}$\\
$aba\cdots cd$&$\frac{x^7}{(1-x)^2(1+x)}$&$f_\E(f_\E+f_\G- x/(1-x))$&$\frac{x^7}{(1-2x)^5(1-x)}$\\
$aba\cdots dc$&$\frac{x^8}{(1-x)^2(1+x)}$&$(f_\E+f_\G-x/(1-x))^2$&$\frac{x^8}{(1-2x)^6(1-x)}$\\
\hline
\end{tabular}
}

Adding these contributions together yields:
\[i_{3\times3\setminus W}(x) = \frac{x^6(1-x)}{(1-2x)^6}\]
Recall again that this excludes inflations of wedge simple permutations (which we counted in the $2\times 4$ class).

We also need the generating function for the inflations of $2413$, $3142$ and $24153$, and this is: \begin{multline*}2\left(\frac{x}{1-x}\right)^2\cdot f_\E(x)\cdot\left(f_\E(x)+f_\G(x)-\frac{x}{1-x}\right) + \\\left(\frac{x}{1-x}\right)^3\cdot\left(f_\E(x)+f_\G(x)-\frac{x}{1-x}\right)^2 =\frac{x^4(2-3x)}{(1-x)(1-2x)^4}\end{multline*}

Putting everything together:
\begin{eqnarray*}i_\X(x) &=& i_{6\times 6}(x) + 2i_{2\times 4}(x)+i_{3\times 3\setminus W}(x) - \frac{x^4(2-3x)}{(1-x)(1-2x)^4}\\
&=& \frac{x^4(x^7-26x^6+46x^5-14x^4-25x^3+28x^2-12x+2)}{(1-x)^4(1-2x)^6}
\end{eqnarray*}

\subsection{The strong-indecomposables}

The strong-indecomposables are the inflations of simple permutations of length 4 or more.  Having calculated the inflations in each of $\T$, $\F$, $\S$ and $\X$ the main ingredient we still need is to compute the inflations of  intersections between these classes.

\paragraph{Inflations of wedge simples}
In both varieties of wedge simples there are two of each length from 4 onwards but the two varieties intersect in $\{2413, 3142, 24153\}$.  In computing their inflations we have to distinguish between odd and even lengths.  In even lengths both the permutations of each variety have one point that inflates to $\E$ (or a symmetry) and a pair of points that inflates to $(\G\setminus \I,\D)$ or $(\I,\E)$.  But in odd lengths each variety has one wedge simple with two points inflating by (symmetries of) $\E$, whereas the other has two pairs of points which inflate to $(\G\setminus \I,\D)$ or $(\I,\E)$.  Taking all this into consideration a routine calculation shows that

\[i_W(x)=\frac{x^4(2-5x+2x^2+2x^3)}{(1-x)(1-2x)^5}\]


We now consider the intersections:
\begin{itemize}
\item $\T\cap \X$: By inspection, these are the permutations in the $3\times 3$ class given above.
We have, \[i_{3\times 3}(x) = i_{3\times3\setminus W}(x) + i_W(x) = \frac{(2-9x+13x^2-4x^3-3x^4)x^4}{(1-2x)^6(1-x)}\]

\item $\F\cap \X$ and $\S\cap \X$.
The permutation in $\F\cap \X$ lie in the class
\begin{center}
\psset{xunit=0.006in, yunit=0.006in} \psset{linewidth=0.005in}
\begin{pspicture}(0,0)(80,120)\drawlines(4,6)\upline(2,6)\upline(3,3)\downline(2,2)\downline(3,1)\cellclass(1,4){\tiny$\bullet$}%
\cellclass(4,5){\tiny$\bullet$}
\end{pspicture}\end{center}
Following similar analysis to the class $\F$, the simples lie either in the $2\times 4$ class, or all cells in the above cell diagram are nonempty. We have already considered $ i_{2\times 4}(x)$, while for the second type (which we call the $4\times 6$ class), we observe that the automaton to enumerate its simples  is the same as for the  $4\times 7$ class, but with no points encoded by the letter $f$. Thus $s_{4\times 6}(x) = s_{4\times 7}(x)\cdot (1-x^2) = x^5(1+x)/(1-x)^2$.

To compute $i_{4\times 6}(x)$ we note that   points in the  $4\times 6$ class are inflated in the same way as for the $4\times 7$ class.  This gives
\[ i_{4\times 6}(x) = s_{4\times 6}\left(\frac{x}{1-x}\right) \cdot \left(\frac{1-x}{x}\right)^2\cdot f_\E(x)^2 = \frac{x^5}{(1-x)^2(1-2x)^4}.\]
Therefore
\[i_{\F\cap \X}(x) =  i_{4\times 6}(x)+ i_{2\times 4}(x)=\frac{x^4(2-6x+5x^2+x^3-4x^4)}{(1-2x)^5(1-x)^2}.\]
$\S\cap \X$ is handled by symmetry.
\item $\T \cap \F$ consists of wedge simples shaped like $<\hspace{-5pt}\cdot$ while $\T\cap \S$ is the other variety of wedge simples.  Both sets are obviously contained in $\X$ and so $\T\cap \F=\T\cap \F\cap \X$ and $\T\cap \S=\T\cap \S\cap \X$.
\item $\F\cap \S$ and $\T\cap \F\cap \S$ (which are both finite) are also obviously contained in $\X$ and so $\F\cap \S=\F\cap \S\cap \X$ and $\T\cap \F\cap \S=\T\cap \F\cap \S\cap \X$:
%
%
\end{itemize}

These set equalities simplify the inclusion-exclusion calculation of the strong-indecomposables to
\begin{align*}
\begin{split}
i(x)&=i_\T+i_\F+i_S+i_\X\\
&\quad -(i_{\T\cap \F}+i_{\T\cap \S}+i_{\T\cap \X}+i_{\F\cap \S}+i_{\F\cap \X}+i_{\S\cap \X})\\
&\quad +(i_{\T\cap \F\cap \S}+i_{\T\cap \F\cap \X}+i_{\T\cap \S\cap \X}+i_{\F\cap \S\cap \X})\\
&\quad -i_{\T\cap \F\cap \S\cap \X}\\
&= i_\T+i_\F+i_\S+i_\X
- \left(  i_{3\times 3} + i_{\F\cap \X} + i_{\S\cap \X}\right)\\
\end{split}
\end{align*}
Here and henceforth the omission of a subscript referring to a set on a generating function means that we are dealing with the entire class $\Av(1324,4312)$.
This yields:
\[
i(x) = \frac{(2-12x+24x^2-8x^3-41x^4+57x^5-16x^6)x^4}{(1-x)(1-3x+x^2)(1-2x)^6}
\]

\subsection{Sum- and skew-decomposables, and final enumeration}

We begin with the sum-decomposables. These are of the form $\alpha_1\oplus \alpha_2 \oplus \cdots \oplus \alpha_k$, where $\alpha_2,\dots,\alpha_{k-1}$ are singletons, $\alpha_1\in \Av(132,4312)$, $\alpha_k\in\Av(213,4312)=\G$ and $\alpha_1,\alpha_k$ are sum-indecomposable. These last two two classes are symmetric, so we need enumerate only the sum-indecomposables of $\G$. 

Because $\G$ avoids $213$, sum-decomposable elements of $\G$ are of the form $1\oplus \cdots \oplus 1 \oplus \pi$, where $\pi$ is sum-indecomposable. So we have the equation, $f_\G(x) = f_{\not\oplus_\G(x)} \cdot \frac{1}{1-x}$, and hence
\[
f_{\not\oplus_\G(x)} = \frac{x(1-3x+3x^2)}{(1-2x)^2}.
\]
This gives
\[
f_{\oplus}(x) = f_{\not\oplus_\G(x)}(x)^2 \cdot \frac{1}{1-x} = \frac{x^2(1-3x+3x^2)^2}{(1-2x)^4(1-x)}
\]

Now we analyze the skew-decomposables. These take the form $\alpha_1\ominus \alpha_2 $, where $\alpha_1$ is $\ominus$-indecomposable, and either $\alpha_1\in \I$ and $\alpha_2\in \Av(312,1324)$, or $\alpha_1\in \Av(1324,4312)\setminus \I$ and $\alpha_2\in \D$.

We already know that  $f_{\Av(312,1324)}=\frac{x(1-3x+3x^2)}{(1-x)(1-2x)^2}$. So now we need the expression for the skew-indecomposables in $\Av(1324,4312)$, which essentially consists of everything we have counted so far:
\begin{align*}
f_{\not\ominus}(x) &= x + {f_\oplus}(x) + i(x)\\
 &= \frac{x(1-15x+99x^2-373x^3+879x^4-1338x^5+1311x^6-804x^7+289x^8-44x^9)}{(1-3x+x^2)(1-2x)^6(1-x)}.\end{align*}
To exclude the class of increasing permutations from this enumeration we just have to subtract $x/(1-x)$.

The skew-decomposables are therefore enumerated by
\[
f_{\ominus}(x) = \frac{x^2(1-14x+87x^2-313x^3+718x^4-1090x^5+1095x^6-708x^7+273x^8-44x^9)}{(1-3x+x^2)(1-2x)^6(1-x)^2}.
\]

Finally, the class $\Av(1324, 4312)$ itself is enumerated  by
%
\begin{align*}
f(x) &= f_{\not\ominus}(x) + f_{\ominus}(x)\\[5pt]
&= \frac{x(1-15x+100x^2-385x^3+939x^4-1499x^5+1559x^6-1020x^7+385x^8-60x^9)}{(1-3x+x^2)(1-2x)^6(1-x)^2}.\end{align*}

The series expansion begins 1, 2, 6, 22, 86, 335, 1266, 4598, 16016, 53579, 172663, 537957, $\dots$ (sequence A165526 of~\cite{sloane:the-on-line-enc:}).

\bibliographystyle{acm}
\bibliography{refs}

\def\cprime{$'$}
\begin{thebibliography}{10}

\bibitem{albert:simple-permutat:}
{\sc Albert, M.~H., and Atkinson, M.~D.}
\newblock Simple permutations and pattern restricted permutations.
\newblock {\em Discrete Math. 300}, 1-3 (2005), 1--15.

\bibitem{albert:geometric-grid-:}
{\sc Albert, M.~H., Atkinson, M.~D., Bouvel, M., Ru{\v{s}}kuc, N., and Vatter,
  V.}
\newblock Geometric grid classes of permutations.
\newblock To appear in \emph{Trans.~Am.~Math.~Soc.}, (arXiv:1108.6319v2
  [math.CO]).

\bibitem{albert:the-enumeration:2143:4231}
{\sc Albert, M.~H., Atkinson, M.~D., and Brignall, R.}
\newblock The enumeration of permutations avoiding $2143$ and $4231$.
\newblock {\em Pure Mathematics and Applications 22\/} (2012).
\newblock To appear.

\bibitem{atkinson:restricted-perm:}
{\sc Atkinson, M.~D.}
\newblock Restricted permutations.
\newblock {\em Discrete Math. 195}, 1-3 (1999), 27--38.

\bibitem{bona:the-permutation:}
{\sc B{\'o}na, M.}
\newblock The permutation classes equinumerous to the smooth class.
\newblock {\em Electron. J. Combin. 5\/} (1998), Research paper 31, 12 pp.

\bibitem{GAP_Automata}
{\sc Delgado, M., Linton, S., and Morais, J.~J.}
\newblock Automata - a gap package, version 1.13, 2008.
\newblock \url{http://www.gap-system.org/Manuals/pkg/automata/doc/manual.html}.

\bibitem{GAP4}
{\sc The GAP~Group}.
\newblock {\em {GAP -- Groups, Algorithms, and Programming, Version 4.4.12}},
  2008.
\newblock \url{http://www.gap-system.org}.

\bibitem{huczynska:grid-classes-an:}
{\sc Huczynska, S., and Vatter, V.}
\newblock Grid classes and the {F}ibonacci dichotomy for restricted
  permutations.
\newblock {\em Electron. J. Combin. 13\/} (2006), Research paper 54, 14 pp.

\bibitem{knuth:the-art-of-comp:1}
{\sc Knuth, D.~E.}
\newblock {\em The art of computer programming. {V}olume 1}.
\newblock Addison-Wesley Publishing Co., Reading, Mass., 1968.
\newblock Fundamental Algorithms.

\bibitem{kremer:permutations-wi:}
{\sc Kremer, D.}
\newblock Permutations with forbidden subsequences and a generalized
  {S}chr\"oder number.
\newblock {\em Discrete Math. 218}, 1-3 (2000), 121--130.

\bibitem{kremer:postscript:-per:}
{\sc Kremer, D.}
\newblock Postscript: ``{P}ermutations with forbidden subsequences and a
  generalized {S}chr\"oder number''.
\newblock {\em Discrete Math. 270}, 1-3 (2003), 333--334.

\bibitem{kremer:finite-transiti:}
{\sc Kremer, D., and Shiu, W.~C.}
\newblock Finite transition matrices for permutations avoiding pairs of length
  four patterns.
\newblock {\em Discrete Math. 268}, 1-3 (2003), 171--183.

\bibitem{le:wilf-classes-of:}
{\sc Le, I.}
\newblock Wilf classes of pairs of permutations of length $4$.
\newblock {\em Electron. J. Combin. 12\/} (2005), Research Paper 25, 27 pp.

\bibitem{murphy:profile-classes:}
{\sc Murphy, M.~M., and Vatter, V.}
\newblock Profile classes and partial well-order for permutations.
\newblock {\em Electron. J. Combin. 9}, 2 (2003), Research paper 17, 30 pp.

\bibitem{simion:restricted-perm:}
{\sc Simion, R., and Schmidt, F.~W.}
\newblock Restricted permutations.
\newblock {\em European J. Combin. 6}, 4 (1985), 383--406.

\bibitem{sloane:the-on-line-enc:}
{\sc Sloane, N. J.~A.}
\newblock The {O}n-line {E}ncyclopedia of {I}nteger {S}equences.
\newblock Available online at
  {http://www.research.att.com/\~{}njas/sequences/}.

\bibitem{vatter:on-partial-well:}
{\sc Vatter, V., and Waton, S.}
\newblock On partial well-order for monotone grid classes of permutations.
\newblock {\em Order 28\/} (2011), 193--199.

\bibitem{west:generating-tree:96}
{\sc West, J.}
\newblock Generating trees and forbidden subsequences.
\newblock {\em Discrete Math. 157}, 1-3 (1996), 363--374.

\bibitem{wiki:enumerations}
{\sc Wikipedia}.
\newblock Enumerations of specific permutation classes, {W}ikipedia{,} the free
  encyclopedia, 2011.
\newblock Online; accessed 4-Aug-2011.

\end{thebibliography}

\end{document}